\newtheorem{theorem}{Theorem}[section]
\newtheorem{lemma}{Lemma}[section]
\newtheorem{remark}{Remark}[section]
\numberwithin{equation}{section}
\newcommand{\al}{\alpha}
\newcommand{\la}{\lambda}
\newcommand{\g}{\gamma}
\newcommand{\DD}{\partial_t^\alpha}
\newcommand{\II}{(\Omega)}
\newcommand{\fy}{\varphi}
\def\dH#1{\dot H^{#1}(\Omega)}
\def\bPtau{\bar\partial_\tau}
\begin{document}

\title[Rayleigh-Stokes Problem]
{An Analysis of the Rayleigh-Stokes problem for a Generalized Second-Grade Fluid}

\author{Emilia Bazhlekova}
\address{Institute of Mathematics and
Informatics, Bulgarian Academy of Sciences, Acad. G. Bonchev str., Bl. 8, Sofia 1113, Bulgaria}
\email{e.bazhlekova@math.bas.bg}

\author {Bangti Jin}
\address{Department of Computer Science, University College London, Gower Street, London WC1E 6BT, UK}
\email{bangti.jin@gmail.com}

\author {Raytcho Lazarov}
\address{Department of Mathematics, Texas A\&M
University, College Station, TX, 77843,
USA and Institute of Mathematics and
Informatics, Bulgarian Academy of Sciences, Acad. G. Bonchev str., Bl. 8, Sofia 1113, Bulgaria}
\email{lazarov@math.tamu.edu}
\author {Zhi Zhou}
\address{Department of Mathematics, Texas A\&M
University, College Station, TX, 77843,
USA}\email{zzhou@math.tamu.edu}
\date{started August 1, 2013, today is \today}

\begin{abstract}
We study the Rayleigh-Stokes problem for a generalized second-grade fluid which involves a Riemann-Liouville
fractional derivative in time, and present an analysis of the problem in the continuous, space semidiscrete
and fully discrete formulations. We establish the Sobolev regularity of the homogeneous problem for both
smooth and nonsmooth initial data $v$, including $v\in L^2(\Omega)$. A space semidiscrete
Galerkin scheme using continuous piecewise linear finite elements is developed,
and optimal with respect to initial data regularity error estimates for the finite element
approximations are derived. Further, two fully discrete schemes based on the backward Euler method and
second-order backward difference method and the related convolution quadrature are developed, and optimal error
estimates are derived for the fully discrete approximations for both smooth and nonsmooth initial data.
Numerical results for one- and two-dimensional examples with smooth and nonsmooth initial data are presented
to illustrate the efficiency of the method, and to verify the convergence theory.\\
{\bf Keywords}: Rayleigh-Stokes problem, finite element method,
error estimate, fully discrete scheme.
\end{abstract}

\maketitle

\section {Introduction}

In this paper, we study the homogeneous Rayleigh-Stokes problem for a generalized second-grade
fluid with a fractional derivative model. Let $\Omega\subset\mathbb{R}^d$ ($d=1,2,3$)
be a convex polyhedral domain with its boundary being $\partial\Omega$, and $T>0$ be a fixed
time. Then the mathematical model is given by
\begin{equation}\label{eqn:rsp}
  \begin{aligned}
\partial_t u-(1+\gamma \DD)\Delta u&=f,\ \ \mbox{in }\Omega,\ \  0<t\le T;\\ 
u&=0,\ \  \mbox{on }\partial\Omega, \ 0<t\le T;\\ 
u(\cdot,0)&=v,\ \ \mbox{in }\Omega, 
  \end{aligned}
\end{equation}
where $\gamma>0$ is a fixed constant, $v$ is the initial data, $\partial_t=\partial/\partial t$, and $\DD$
is the Riemann-Liouville fractional derivative of order $\alpha\in(0,1)$ defined by \cite{KilbasSrivastavaTrujillo:2006,Podlubny:1999}:
\begin{equation*}
\DD f(t)=\frac{d}{d t}\int_0^t \omega_{1-\alpha}(t-s)f(s)\,ds,
\qquad \omega_\alpha(t)=\frac{t^{\alpha-1}}{\Gamma(\alpha)}.
\end{equation*}

The Rayleigh-Stokes problem \eqref{eqn:rsp} has received considerable attention in recent years. The
fractional derivative $\partial_t^\alpha$ in the model is used to capture the viscoelastic behavior
of the flow; see e.g. \cite{ShenTanZhaoMasuoka:2006,FetecauJamilFetecauVieru:2009} for derivation
details. The model \eqref{eqn:rsp} plays an important role in describing the behavior of some
non-Newtonian fluids.

In order to gain insights into the behavior of the solution of this model, there has been substantial interest
in deriving a closed form solution for special cases; see, e.g. \cite{ShenTanZhaoMasuoka:2006,ZhaoYang:2009,
FetecauJamilFetecauVieru:2009}. For example, Shen et al \cite{ShenTanZhaoMasuoka:2006} obtained the exact
solution of the problem using the Fourier sine transform and fractional Laplace transform. Zhao and Yang
\cite{ZhaoYang:2009} derived exact solutions using the eigenfunction expansion on a rectangular domain for the
case of homogeneous initial and boundary conditions. The solutions obtained in these studies are
formal in nature, and especially the regularity of the solution has not been studied. In Section \ref{sec:reg}
below, we fill this gap and establish the Sobolev regularity of the solution for both smooth and nonsmooth initial data.
We would like to mention that Girault and Saadouni \cite{GiraultSaadouni:2007} analyzed the
existence and uniqueness of a weak solution of a closely related time-dependent grade-two fluid model.

The exact solutions obtained in these studies involve infinite series and special functions, e.g., 
generalized Mittag-Leffler functions, and thus are inconvenient for numerical evaluation. Further, 
closed-form solutions are available only for a restricted class of problem settings. Hence, it is 
imperative to develop efficient and optimally accurate numerical algorithms for problem 
\eqref{eqn:rsp}. This was considered earlier in \cite{ChenLiuAnh:2008,ChenLiuAnh:2009,LinJiang:2011,
MohebbiAbbaszadehDehghan:2013,Wu:2009}. Chen et al \cite{ChenLiuAnh:2008} developed implicit and 
explicit schemes based on the finite difference method in space and the Gr\"{u}nwald-Letnikov
discretization of the time fractional derivative, and analyzed their stability and convergence rates 
using the Fourier method. Of the same flavor is the work \cite{ChenLiuAnh:2009},
where a scheme based on Fourier series expansion was considered. Wu \cite{Wu:2009} developed
an implicit numerical approximation scheme by transforming problem
\eqref{eqn:rsp} into an integral equation, and showed its stability and convergence by an energy argument.
Lin and Jiang \cite{LinJiang:2011} described a method based on the reproducing kernel Hilbert space.
Recently, Mohebbi et al \cite{MohebbiAbbaszadehDehghan:2013} compared a compact finite difference
method with the radial basis function method. In all these studies, however, the error estimates were
obtained under the assumption that the solution to \eqref{eqn:rsp} is sufficiently smooth and
the domain $\Omega$ is a rectangle. Hence the interesting cases of
nonsmooth data (the initial data or the right hand side) and general domains are not covered.

Theoretical studies on numerical methods for
differential equations involving fractional derivatives have received considerable
attention in the last decade.  McLean and Mustapha \cite{McLeanMustapha:2009,MustaphaMcLean:2011}
analyzed piecewise constant and piecewise linear discontinuous Galerkin method in time, and
derived error estimates for smooth initial data; see also \cite{MustaphaMcLean:2013} for related
superconvergence results. In \cite{JinLazarovZhou:2013,JinLazarovPasciakZhou:2013},
a space semidiscrete Galerkin finite element method (FEM) and lumped mass method for
problem ${^C \kern -.2em\partial_t^\alpha} u + A u = 0$ with $u(0)=v$ (with $A$ being an elliptic operator,
and ${^C\kern -.2em\partial_t^\alpha}$ being the Caputo derivative) has been analyzed. Almost optimal
error estimates were established for initial data $v\in \dH q$, $-1\leq q\leq 2$,
(see Section \ref{sec:reg} below for the definition) by exploiting the properties
of the two-parameter Mittag-Leffler function. Note that this includes weak (nonsmooth),
$ v \in L^2(\Omega)$, and very weak data, $v\in \dH {-1}$.
In \cite[Section 4]{McLeanThomee:2010a}, McLean and Thom\'{e}e studied the following equation
$\partial_t u + {\partial_t^{-\alpha}} Au = f$ (with ${\partial_t^{-\alpha}}$ being Riemann-Liouville
integral and derivative operator for $\alpha\in(0,1)$ and $\alpha\in(-1,0)$, respectively), and derived
$L^2\II$-error estimates for the space semidiscrete scheme for both $v\in L^2(\Omega)$ and
$v\in \dH 2$ (and suitably smooth $f$) and some fully discrete schemes based on Laplace transform
were discussed. The corresponding $L^\infty(\Omega)$ estimates for data $v\in L^\infty(\Omega)$
and $Av\in L^\infty(\Omega)$ were derived in \cite{McLeanThomee:2010b}. Lubich et al
\cite{LubichSloanThomee:1996} developed two fully discrete schemes for the problem
$\partial_tu + \partial_t^{-\alpha} Au = f$ with $u(0)=v$ and $0<\alpha<1$ based
on the convolution quadrature of the fractional derivative term, and derived optimal
error estimates for nonsmooth initial data and right hand side. Cuesta et al
\cite{CuestaLubichPlencia:2006} considered the semi-linear counterpart of the model with convolution
quadrature, which covers also the fractional diffusion case, i.e., $-1<\alpha<0$, and provided
a unified framework for the error analysis with optimal error estimates in an abstract Banach space setting.

In this paper we develop a Galerkin FEM for problem \eqref{eqn:rsp} and derive optimal with respect to 
data regularity error estimates for both smooth and nonsmooth initial data. The approximation is based on
the finite element space $X_h$ of continuous piecewise linear functions over a family of shape regular 
quasi-uniform partitions $\{\mathcal{T}_h\}_{0<h<1}$ of the domain $\Omega$ into $d$-simplexes, where $h$ 
is the maximum diameter. The semidiscrete Galerkin FEM for problem \eqref{eqn:rsp} is: find $ u_h (t)\in X_h$ such that
\begin{equation}\label{eqn:fem}
  \begin{split}
   {(\partial_t u_{h},\chi)}+ \g\partial_t^\alpha a(u_h,\chi) + a(u_h,\chi)&= {(f, \chi)},
                      \quad \forall \chi\in X_h,\ T \ge t >0,
   \quad u_h(0)=v_h,
  \end{split}
\end{equation}
where $a(u,w)=(\nabla u, \nabla w) ~~ \text{for}\ u, \, w\in H_0^1(\Omega)$, and $v_h \in X_h$ is an
approximation of the initial data $v$. Our default choices are the $L^2(\Omega)$ projection $v_h=P_hv$, assuming
$v\in L^2(\Omega)$, and the Ritz projection $v_h=R_hv$, assuming $v\in
\dH 2$. Further, we develop two fully discrete schemes based on the backward Euler method and the
second-order backward difference method and the related convolution quadrature for the fractional derivative term,
which achieves respectively first and second-order accuracy in time. Error estimates optimal with respect to
data regularity are provided for both semidiscrete and fully discrete schemes.

Our main contributions are as follows. First, in Theorem \ref{thm:reg}, using an operator
approach from \cite{Pruss:1993}, we develop the theoretical foundations for our study by
establishing the smoothing property and decay behavior of the solution to problem \eqref{eqn:rsp}.
Second, for both smooth initial data
$v\in \dH 2$ and nonsmooth initial data $v\in L^2(\Omega)$, we derive error estimates for the space
semidiscrete scheme, cf. Theorems \ref{thm:nonsmooth-initial-op} and  \ref{thm:smooth-initial-op}:
\begin{equation*}
 \|u(t)-u_h(t)\|_{L^2(\Omega)} + h\|\nabla (u(t)-u_h(t))\|_{L^2(\Omega)} \leq ch^2 t^{(q/2-1)(1-\alpha)}\|v\|_{\dH q},\quad q =0,2.
\end{equation*}
The estimate for $v\in L^2(\Omega)$ deteriorates as $t$ approaches $0$. The error estimates are
derived following an approach due to Fujita and Suzuki \cite{FujitaSuzuki:1991}. Next, in Theorems
\ref{thm:error_fully_smooth} and \ref{thm:error_fully_smooth2} we establish optimal $L^2(\Omega)$
error estimates for the two fully discrete schemes. The proof is inspired by the fundamental work
of Cuesta et al \cite{CuestaLubichPlencia:2006}, which relies on known error estimates for convolution
quadrature and bounds on the convolution kernel.  We show for example,
that the discrete solution $U_h^n$ by the
backward Euler method (on a uniform grid in time with a time step size $\tau$)
satisfies the following {\it a priori} error bound
\begin{equation*}
  \|U_h^n - u(t_n)\|_{L^2(\Omega)}
  \leq c(\tau t_n^{-1 + (1-\alpha)q/2}+h^2 t_n^{(q/2-1)(1-\alpha)})\|v\|_{\dH q},\quad q =0,2.
\end{equation*}
A similar estimate holds for the second-order backward difference method.

The rest of the paper is organized as follows. In Section \ref{sec:reg} we establish the Sobolev
regularity of the solution. In Section \ref{sec:semidiscrete},
we analyze the space semidiscrete scheme, and derive optimal error estimates for both smooth and
nonsmooth initial data. Then in Section \ref{sec:fullydiscrete}, we develop two fully discrete
schemes based on convolution quadrature approximation of the fractional derivative. Optimal error
estimates are provided for both schemes. Finally in Section \ref{sec:numeric}, numerical results for
one- and two-dimensional examples are provided to illustrate the convergence theory. Throughout,
the notation $c$ denotes a constant which may differ at different occurrences, but it is always
independent of the solution $u$, mesh size $h$ and time step-size $\tau$.

\section{Regularity of the solution}\label{sec:reg}

In this section, we establish the Sobolev regularity of the solution to \eqref{eqn:rsp} in the homogeneous case $f\equiv 0$. We first
recall preliminaries on the elliptic operator and function spaces. Then we derive the proper
solution representation, show the existence of a weak solution, and establish the Sobolev regularity of
the solution to the homogeneous problem. The main tool is the operator theoretic approach
developed in \cite{Pruss:1993}. Further, we give an alternative solution representation
via eigenfunction expansion, and derive qualitative properties of the time-dependent components.

\subsection{Preliminaries}
First we introduce some notation. For
$q\ge-1$, we denote by $\dH {q} \subset H^{-1}(\Omega)$ the Hilbert space induced by the norm
\begin{equation*}
   \|v\|_{\dH q}^2=\sum_{j=1}^\infty\lambda_j^q ( v,\fy_j)^2,
\end{equation*}
with $(\cdot,\cdot)$ denoting the inner product in $L^2(\Omega)$ and $\{\lambda_j\}_{j=1}^\infty$ and
$\{\fy_j\}_{j=1}^\infty$ being respectively the Dirichlet eigenvalues and eigenfunctions of $-\Delta$
on the domain $\Omega$. As usual, we identify a function $f$
in $L^2(\Omega)$ with the functional $F$ in $H^{-1}(\Omega)\equiv (H_0^1\II)'$ defined by $\langle F,\phi\rangle =
(f,\phi)$, for all $\phi\in H^1_0(\Omega)$. Then sets $\{\fy_j\}_{j=1}^\infty$ and $\{\la_j^{1/2}
\fy_j\}_{j=1}^\infty$ form orthonormal basis in $L^2(\Omega)$ and $H^{-1}(\Omega)$, respectively.
Thus $\| v \|_{\dH {-1}}=\| v \|_{H^{-1}\II}$,
$\|v\|_{\dH 0}=\|v\|_{L^2\II}=(v,v)^{1/2}$ is the norm
in $L^2(\Omega)$, $\|v\|_{\dH 1}$ is the norm in $H_0^1(\Omega)$ and $\|v\|_{\dH 2}=\|\Delta v\|_{L^2\II}$ is
equivalent to the norm in $H^2(\Omega)$ when $v=0$ on $\partial\Omega$ \cite{Thomee:2006}.
Note that $\dot H^s(\Omega)$, $s\ge -1$ form a
Hilbert scale of interpolation spaces. Thus we denote $\|\cdot\|_{H_0^s(\Omega)}$ to
be the norm on the interpolation scale between $H^1_0(\Omega)$ and $L^2(\Omega)$  for  $s$
is in the interval $[0,1]$ and $\|\cdot\|_{H^{s}(\Omega)}$ to be the norm on the interpolation scale between
$L^2(\Omega)$ and $H^{-1}(\Omega)$ when $s$ is in $[-1,0]$.  Then, the $\dot H^s\II$ and $H_0^s\II$ norms 
are equivalent for any $s\in [0,1]$ by interpolation, and likewise the $\dot H^s\II$ and $H^{s}\II$ norms
are equivalent for any $s\in[-1,0]$. 

For $\delta>0$ and $\theta\in(0,\pi)$ we introduce the contour $\Gamma_{\delta,\theta}$  defined by
\begin{equation*}
   \Gamma_{\delta,\theta}=\left\{re^{-\mathrm{i}\theta}:\ r\geq \delta\right\}\cup \left\{\delta e^{i\psi}:
    \ |\psi|\le\theta\right\}\cup \left\{re^{\mathrm{i}\theta}:\ r\geq \delta\right\},
\end{equation*}
where the circular arc is oriented counterclockwise, and the two rays are oriented with an increasing
imaginary part. Further, we denote by $\Sigma_\theta$ the sector
\begin{equation*}
  \Sigma_\theta=\{z\in\mathbb{C};\ z\neq 0, |\arg z|<\theta\}.
\end{equation*}

We recast problem \eqref{eqn:rsp} with $f\equiv 0$ into a Volterra integral equation by integrating
both sides of the governing equation in \eqref{eqn:rsp}
\begin{equation}\label{eqn:V}
   u(x,t)=v(x)-\int_0^t k(t-s) A u(x,s)\,ds,
\end{equation}
where the kernel $k(t)$ is given by
\begin{equation*}
  k(t)=1+\g\omega_{1-\alpha}(t)
\end{equation*}
and the operator $A$ is defined by $A=-\Delta$ with a domain $D(A)= H_0^1(\Omega) \cap H^2(\Omega)$.
The $H^2(\Omega)$ regularity of the elliptic problem is essential for our discussion, and it follows
from the convexity assumption on the domain $\Omega$.
It is well known that the operator $-A$ generates a bounded analytic semigroup of angle $\pi/2$, i.e.,
for any $\theta \in (0,\pi/2)$
\begin{equation}\label{eqn:resolvent}
  \|(z+A)^{-1}\|\le M/|z|,\ \ \forall z\in\Sigma_{\pi-\theta}.
\end{equation}
Meanwhile, applying the Laplace transform to \eqref{eqn:V} yields
\begin{equation*}
   \widehat u (z) + \widehat{k}(z) A\widehat u(z) = z^{-1}v,
\end{equation*}
i.e., $\widehat{u}(z) = H(z)v$, with the kernel $H(z)$ given by
\begin{equation}\label{eqn:HH}
    H(z)=\frac{g(z)}{z}(g(z)I+A)^{-1},\qquad
    g(z)=\frac{1}{\widehat{k}(z)}=\frac{z}{1+\gamma z^\alpha},
\end{equation}
where $\widehat{k}$ is the Laplace transform of the function $k(t)$. Hence, by means of the inverse
Laplace transform, we deduce that the solution operator $S(t)$ is given by
\begin{equation}\label{eqn:solop}
  S(t)=\frac{1}{2\pi\mathrm{i}}\int_{\Gamma_{\delta,\pi-\theta}} e^{z t} H(z)\, dz,
\end{equation}
where $\delta>0$, $\theta \in (0,\pi/2)$.

First we state one basic estimate about the kernel $g(z)=z/(1+\gamma z^\alpha)$.
\begin{lemma}\label{lem:est-g}
Fix $\theta\in(0,\pi)$, and let $g(z)$ be defined in \eqref{eqn:HH}. Then
\begin{equation}\label{eqn:controlg1}
 g(z) \in \Sigma_{\pi-\theta}\quad \mbox{and}\quad  |g(z)|\le c\min(|z|,|z|^{1-\alpha}),\quad \forall z\in \Sigma_{\pi-\theta}.
\end{equation}
\end{lemma}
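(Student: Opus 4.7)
The plan is to write $z=re^{\mathrm{i}\phi}$ with $r>0$ and $|\phi|\le\pi-\theta$, so that $z^{\alpha}=r^{\alpha}e^{\mathrm{i}\alpha\phi}$ with $|\alpha\phi|\le\alpha(\pi-\theta)<\pi-\theta$ (using $\alpha<1$), and to treat the two conclusions separately. Both reduce to a single monotonicity fact about the map $R\mapsto 1+Re^{\mathrm{i}\psi}$.

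For the sector inclusion $g(z)\in\Sigma_{\pi-\theta}$, I would use that for fixed $\psi\in(-\pi,\pi)$ and the principal branch of $\log$,
\begin{equation*}
\frac{d}{dR}\arg(1+Re^{\mathrm{i}\psi})
=\mathrm{Im}\,\frac{e^{\mathrm{i}\psi}}{1+Re^{\mathrm{i}\psi}}
=\frac{\sin\psi}{|1+Re^{\mathrm{i}\psi}|^{2}},
\end{equation*}
so $\arg(1+Re^{\mathrm{i}\psi})$ moves monotonically from $0$ (at $R=0$) to $\psi$ (as $R\to\infty$), keeping the sign of $\psi$ throughout; in particular $|\arg(1+Re^{\mathrm{i}\psi})|\le|\psi|$. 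Applying this with $\psi=\alpha\phi$ and $R=\gamma r^{\alpha}$, and writing $\arg g(z)=\phi-\arg(1+\gamma z^{\alpha})$, a case split on the sign of $\phi$ places $\arg g(z)$ in the closed interval between $(1-\alpha)\phi$ and $\phi$, so $|\arg g(z)|\le|\phi|\le\pi-\theta$.

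For the magnitude bound I would establish the uniform lower bound $|1+\gamma z^{\alpha}|\ge c(1+\gamma r^{\alpha})$ on $\Sigma_{\pi-\theta}$. Expanding in $x:=\gamma r^{\alpha}\ge 0$,
\begin{equation*}
|1+\gamma z^{\alpha}|^{2}=(x+\cos\alpha\phi)^{2}+\sin^{2}\alpha\phi,
\end{equation*}
this quadratic has minimum $1$ on $x\ge 0$ when $\cos\alpha\phi\ge 0$ (attained at $x=0$) and minimum $\sin^{2}\alpha\phi$ when $\cos\alpha\phi<0$ (attained at $x=-\cos\alpha\phi>0$). Since $|\alpha\phi|\le\alpha(\pi-\theta)<\pi$, the quantity $|\sin\alpha\phi|$ on the relevant range $[\pi/2,\alpha(\pi-\theta)]$ is bounded below by $\sin\alpha(\pi-\theta)>0$ (because $\sin$ is decreasing on $[\pi/2,\pi]$). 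Together with the asymptotics $|1+\gamma z^{\alpha}|\ge\gamma r^{\alpha}-1$ for $r$ large, this gives the claimed lower bound with a positive constant $c=c(\alpha,\gamma,\theta)$. Since $\min(|z|,|z|^{1-\alpha})=|z|/\max(1,|z|^{\alpha})\ge(1/2)|z|/(1+|z|^{\alpha})$, the bound $|g(z)|=|z|/|1+\gamma z^{\alpha}|\le c'\min(|z|,|z|^{1-\alpha})$ follows.

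The main obstacle is the regime $|\alpha\phi|>\pi/2$, where $1+\gamma z^{\alpha}$ can enter the left half-plane and a naive bound $\mathrm{Re}(1+\gamma z^{\alpha})\ge 1$ fails; there both the angular estimate and the lower bound on $|1+\gamma z^{\alpha}|$ threaten to degenerate. What rescues the argument is the interaction of $\alpha<1$ with the restriction $|\phi|\le\pi-\theta$: it forces $|\alpha\phi|$ strictly below $\pi$, so the monotonicity computation is not obstructed by the branch cut of $\log$, and $\sin\alpha(\pi-\theta)$ remains a positive $\theta$-dependent constant that underpins the quadratic analysis above.
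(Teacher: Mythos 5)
Your proposal is correct, and it follows the same overall strategy as the paper's proof — show that $\arg g(z)$ lies between $(1-\alpha)\phi$ and $\phi$, and bound $|1+\gamma z^\alpha|$ from below — but the mechanisms differ at both steps. For the sector inclusion, the paper multiplies numerator and denominator by the conjugate of $1+\gamma z^\alpha$ and observes that the resulting numerator $r e^{\mathrm{i}\phi}+\gamma r^{\alpha+1}e^{\mathrm{i}(1-\alpha)\phi}$ is a positive combination of two vectors with arguments $\phi$ and $(1-\alpha)\phi$ over a positive denominator; your monotonicity computation for $\arg(1+Re^{\mathrm{i}\psi})$ reaches the same localization of $\arg g(z)$ by calculus rather than algebra (only note that membership in the open sector requires the strict inequality $|\phi|<\pi-\theta$, which you do have since $z\in\Sigma_{\pi-\theta}$). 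For the magnitude bound, the paper avoids your case split on the sign of $\cos(\alpha\phi)$ and the large-$r$ patching argument entirely: since $\alpha<1$ forces $|\alpha\phi|<\alpha\pi<\pi$, it uses $\cos(\alpha\phi)>\cos(\alpha\pi)$ to get $|1+\gamma z^\alpha|^2>1+2\gamma r^\alpha\cos(\alpha\pi)+\gamma^2 r^{2\alpha}$, from which minimizing the quadratic gives $|1+\gamma z^\alpha|>\sin(\alpha\pi)$ and dropping the nonnegative square gives $|1+\gamma z^\alpha|\ge \gamma r^\alpha\sin(\alpha\pi)$, yielding the two bounds of the minimum separately with the explicit, $\theta$-independent constant $1/\sin(\alpha\pi)$ (respectively $1/(\gamma\sin(\alpha\pi))$). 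Your route instead produces the uniform lower bound $|1+\gamma z^\alpha|\ge c(1+\gamma r^\alpha)$ with a $\theta$-dependent constant $\sin(\alpha(\pi-\theta))$; this is perfectly adequate for the lemma as stated, but the paper's observation buys a shorter argument and sharper, sector-uniform constants that in fact hold on the whole slit plane.
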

\begin{proof}
Let $z \in \Sigma_{\pi-\theta}$, i.e. $z=r e^{\mathrm{i}\psi}$, $|\psi|<\pi-\theta$, $r>0$. Then by
noting $\alpha\in(0,1)$,
\begin{equation}\label{eqn:ginsector}
  \begin{aligned}
   g(z) = \frac{re^{\mathrm{i}\psi}}{1+\gamma r^\alpha e^{\alpha\mathrm{i}\psi}}
     = \frac{r e^{\mathrm{i}\psi}+\gamma r^{\al+1}e^{\mathrm{i}(1-\al)\psi}}{(1+\gamma
     r^{\al} \cos(\al\psi))^2+(\gamma r^\al\sin(\al\psi))^2}\in\Sigma_{\pi-\theta}.
  \end{aligned}
\end{equation}
To prove (\ref{eqn:controlg1}) we note that
\begin{equation}\label{eqn:gg}
  \begin{aligned}
    |1+\gamma z^\alpha|^2=1+2\gamma r^\alpha\cos(\alpha \psi)+\gamma^2 r^{2\alpha}
    >1+2\gamma r^\alpha\cos(\alpha \pi)+\gamma^2 r^{2\alpha}.
  \end{aligned}
\end{equation}
Let $b=\cos(\alpha \pi)$. Since the function $f(x)=1+2bx+x^2$ attains its minimum at $x=-b$,
with a minimum value $f_{min}=f(-b)=1-b^2$, it follows from \eqref{eqn:gg} that
\begin{equation*}
   |1+\gamma z^\alpha|^2>1-\cos^2(\alpha \pi)=\sin^2(\alpha\pi).
\end{equation*}
Since $\sin(\alpha\pi)>0$, this leads to the first assertion $$|g(z)|=\left|\frac{z}{1+\g z^\alpha}
\right|<\frac{1}{\sin(\alpha\pi)}|z|.$$

From \eqref{eqn:gg} it follows that
\begin{equation}\label{eqn:o1}
  |1+\gamma z^\alpha|^2>(1+\gamma r^{\al} \cos(\al\pi))^2+(\gamma r^\al\sin(\al\pi))^2\geq \sin^2(\alpha\pi)\gamma^2r^{2\alpha},
\end{equation}
and consequently, we get
\begin{equation*}
        |g(z)|  = \bigg|\frac{z}{1+\gamma z^\alpha}\bigg|
		 \leq \frac{r}{\gamma r^\alpha\sin(\alpha\pi)}=\frac{1}{\gamma\sin(\alpha\pi)}|z|^{1-\alpha}.
   \end{equation*}
This completes the proof of the lemma.
\end{proof}

\subsection{A priori estimates of the solution}

Now we can state the regularity to problem \eqref{eqn:rsp} with $f\equiv 0$.
\begin{theorem}\label{thm:reg}
For any $v\in L^2(\Omega)$ and $f\equiv 0$ there exists a unique solution $u$ to problem \eqref{eqn:rsp} and
\begin{equation*}
u=S(t)v\in C([0,T];L^2(\Omega))\cap C((0,T];H^2(\Omega)\cap H_0^1(\Omega)).
\end{equation*}
Moreover, the following stability estimates hold for $t\in(0,T]$ and $\nu=0,1$:
\begin{eqnarray}
&&\|A^\nu S^{(m)}(t)v\|_{L^2(\Omega)}\le c t^{-m-\nu(1-\al)}\|v\|_{L^2(\Omega)},\ v\in L^2(\Omega),
m\ge 0,\label{eqn:new1}\\
&&\|A^\nu S^{(m)}(t)v\|_{L^2(\Omega)}\le c_T t^{-m+(1-\nu)(1-\al)}\|Av\|_{L^2(\Omega)},\ v\in D(A), \nu+m\ge 1,\label{eqn:new2}
\end{eqnarray}
where $c,c_T>0$ are constants depending on $d$, $\Omega$, $\al$, $\g$, $M$ and $m$, and
the constant $c_T$ also depends on $T$.
\end{theorem}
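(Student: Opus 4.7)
The plan is to base everything on the contour representation $S(t)=\frac{1}{2\pi\mathrm{i}}\int_{\Gamma_{\delta,\pi-\theta}}e^{zt}H(z)\,dz$ from \eqref{eqn:solop} and to derive \eqref{eqn:new1}--\eqref{eqn:new2} purely by operator-norm estimates of $H(z)$ on the contour, followed by a careful choice of $\delta$. Differentiating under the integral, which is justified by the exponential decay of $e^{zt}$ on the rays, gives $S^{(m)}(t)v=\frac{1}{2\pi\mathrm{i}}\int_{\Gamma_{\delta,\pi-\theta}}e^{zt}z^{m}H(z)v\,dz$. Once \eqref{eqn:new1}--\eqref{eqn:new2} are in hand, the remaining claims follow by standard arguments: uniform convergence of the integral on compact subsets of $(0,T]$ gives $u\in C((0,T];H^2\II\cap H^1_0\II)$; the $m=\nu=0$ bound together with $\lim_{t\to 0^+}S(t)v=v$, obtained from Laplace inversion applied to the splitting $H(z)v=v/z-z^{-1}A(g(z)+A)^{-1}v$, yields $u\in C([0,T];L^2\II)$ and the initial condition; and uniqueness follows from injectivity of the Laplace transform applied to the Volterra equation \eqref{eqn:V}.

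The first technical step is to derive two operator-norm bounds. Since $g(z)\in\Sigma_{\pi-\theta}$ by Lemma~\ref{lem:est-g}, the sectorial estimate \eqref{eqn:resolvent} gives $\|(g(z)+A)^{-1}\|\le M/|g(z)|$, whence $\|H(z)\|\le M/|z|$. Writing $A(g(z)+A)^{-1}=I-g(z)(g(z)+A)^{-1}$ yields $\|A(g(z)+A)^{-1}\|\le 1+M$, and combined with the upper bound $|g(z)|\le c|z|^{1-\alpha}$ from Lemma~\ref{lem:est-g} this gives $\|AH(z)\|\le c|z|^{-\alpha}$. These unify as $\|A^\nu H(z)\|\le c|z|^{-1+\nu(1-\alpha)}$ for $\nu\in\{0,1\}$. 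For \eqref{eqn:new1} I then take $\delta=1/t$ and split $\Gamma_{1/t,\pi-\theta}$ into the arc, on which $|e^{zt}|\le e$ and $|dz|=(1/t)\,d\psi$, and the two rays, on which $|e^{zt}|=e^{-rt\cos\theta}$. The substitution $s=rt$ turns each ray integral into $t^{-m-\nu(1-\alpha)}\int_1^\infty e^{-s\cos\theta}s^{m-1+\nu(1-\alpha)}\,ds$, which is finite for every $m\ge 0$, and the arc contributes a matching $t^{-m-\nu(1-\alpha)}$, proving \eqref{eqn:new1}.

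For \eqref{eqn:new2} the case $\nu=1$ is immediate: since $H(z)$ is a function of $A$, the operator $A$ commutes with $S^{(m)}(t)$, so $AS^{(m)}(t)v=S^{(m)}(t)(Av)$, and \eqref{eqn:new1} applied to $Av\in L^2\II$ gives $\|AS^{(m)}(t)v\|\le c\,t^{-m}\|Av\|$. The genuinely delicate case is $\nu=0$ with $m\ge 1$, where I would use the resolvent identity
\begin{equation*}
A^{-1}(g(z)+A)^{-1}=\frac{1}{g(z)}\bigl[A^{-1}-(g(z)+A)^{-1}\bigr]
\end{equation*}
to rewrite $A^{-1}H(z)=z^{-1}A^{-1}-z^{-1}(g(z)+A)^{-1}$. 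The first piece contributes zero to $\int_\Gamma e^{zt}z^m A^{-1}H(z)Av\,dz$ for $m\ge 1$, since $z^{m-1}$ is entire and the contour can be closed in the left half-plane where $e^{zt}$ decays. For the remaining piece, I combine $\|(g(z)+A)^{-1}\|\le M/|g(z)|$ with the lower bound $|g(z)|\ge c\min(|z|,|z|^{1-\alpha})$, obtained from $|1+\gamma z^\alpha|\le 1+\gamma|z|^\alpha$. Taking $\delta=1/t$ and splitting the contour at $|z|=1$, the large-$|z|$ piece contributes $\le c\,t^{-m+1-\alpha}\|Av\|$ exactly as in the proof of \eqref{eqn:new1}, while the small-$|z|$ piece is present only for $t>1$ and, since $t\in[1,T]$, contributes at most $c\log T$ or a power of $T$, which is absorbed into $c_T$ and re-expressed via $1\le T^{m-1+\alpha}t^{-m+1-\alpha}$.

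The main obstacle is exactly this last split: the lower bound for $|g(z)|$ is piecewise in $|z|$ relative to $1$, so the contour $\Gamma_{1/t,\pi-\theta}$ crosses the dividing circle precisely when $t>1$, and the two regimes of $|g(z)|^{-1}$ must be handled separately. This is also the mechanism by which the constant in \eqref{eqn:new2} becomes $T$-dependent, whereas \eqref{eqn:new1} uses only the single upper bound on $|g(z)|$ and so carries a $T$-independent constant.
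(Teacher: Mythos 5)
Your proposal is correct and follows essentially the same route as the paper: the contour representation with $\delta=1/t$, the bounds $\|H(z)\|\le M/|z|$ and $\|AH(z)\|\le c|z|^{-\alpha}$ from Lemma \ref{lem:est-g} and \eqref{eqn:resolvent}, and for \eqref{eqn:new2} the identity $g(z)A^{-1}(g(z)I+A)^{-1}=A^{-1}-(g(z)I+A)^{-1}$ together with $\int_\Gamma z^{m-1}e^{zt}\,dz=0$ and absorption of the milder power of $t$ into a $T$-dependent constant. The only deviations are cosmetic: you split the contour at $|z|=1$ where the paper simply uses $|g(z)|^{-1}\le |z|^{-1}+\gamma|z|^{\alpha-1}$ on the whole contour, and you verify existence, uniqueness and continuity directly where the paper cites Pr\"uss.
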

\begin{proof}

By Lemma \ref{lem:est-g} and \eqref{eqn:resolvent} we obtain
\begin{equation}\label{eqn:resolventg}
  \|(g(z)I+A)^{-1}\|\le M/|g(z)|,\ \ z\in\Sigma_{\pi-\theta},
\end{equation}
and we deduce from \eqref{eqn:HH} and \eqref{eqn:resolventg} that
\begin{equation}\label{Hcontrol}
  \|H(z)\|\le {M}/{|z|},\ \ z\in\Sigma_{\pi-\theta}.
\end{equation}
Then by \cite[Theorem 2.1 and Corollary 2.4]{Pruss:1993}, for any $v\in L^2(\Omega)$ there
exists a unique solution $u$ of \eqref{eqn:V} and it is given by
\begin{equation*}
   u(t)=S(t)v.
\end{equation*}
It remains to show the estimates.

Let $t>0$, $\theta \in (0,\pi/2)$, $\delta>0$. We choose $\delta=1/t$ and denote for short
\begin{equation}\label{eqn:Gamma}
\Gamma=\Gamma_{1/t,\pi-\theta}.
\end{equation}
First we derive \eqref{eqn:new1} for $\nu=0$ and $m\ge 0$. From \eqref{eqn:solop} and \eqref{Hcontrol} we deduce
\begin{equation*}
 \begin{aligned}
 \|S^{(m)}(t)\| &=\left\|
	\frac{1}{2\pi \mathrm{i}} \int_{\Gamma} z^m e^{zt} H(z)\, dz
		\right\|
    \le c\int_{\Gamma} |z|^m e^{\Re(z)t} \|H(z)\|\, |dz|\\
	&\le c\left( \int_{1/t}^\infty r^{m-1}e^{-rt\cos\theta}  \,dr
  + \int_{-\pi+\theta}^{\pi-\theta}e^{\cos\psi}t^{-m}\,d\psi\right)\leq ct^{-m}.
	\end{aligned}
\end{equation*}

Next we prove estimate \eqref{eqn:new1} for $\nu=1$ and $m\ge 0$. By applying the operator $A$ to both
sides of \eqref{eqn:solop} and differentiating
we arrive at
\begin{equation}\label{eqn:Asolop}
AS^{(m)}(t)=\frac{1}{2\pi\mathrm{i}}\int_\Gamma z^m e^{z t} AH(z) dz.
\end{equation}
Using the identity
\begin{equation*}
   AH(z)=\left(-H(z)+z^{-1}I\right)g(z),
\end{equation*}
it follows from \eqref{Hcontrol} and Lemma \ref{lem:est-g} that
\begin{equation}\label{eqn:AH}
  \left\|AH(z)\right\|\le (M+1)|z^{-1}g(z)|\le c\min(1,|z|^{-\alpha}),\quad z\in\Sigma_{\pi-\theta}.
\end{equation}
By taking $\left\|AH(z)\right\|\le M|z|^{-\alpha}$, we obtain from \eqref{eqn:Asolop}
\begin{equation*}
  \begin{aligned}
    \|AS^{(m)}(t)\|  &\le c\int_{\Gamma} |z|^{m-\al} e^{\Re(z)t} \, |dz| \\
    &\le c\left( \int_{1/t}^\infty r^{m-\al}e^{-rt\cos\theta}  \,dr
  + \int_{-\pi+\theta}^{\pi-\theta}e^{\cos\psi}t^{-m-1+\al}\,d\psi\right)\leq c t^{-m-1+\al}.
  \end{aligned}
\end{equation*}
This shows estimate \eqref{eqn:new1}. To prove estimate \eqref{eqn:new2} with $\nu=0$ we observe that
\begin{equation*}
  \begin{aligned}
    S^{(m)}(t)v & =\frac{1}{2\pi\mathrm{i}}\int_{\Gamma} z^m e^{zt} \frac{g(z)}{z} (g(z)I+A)^{-1} v \,dz \\
    & = \frac{1}{2\pi\mathrm{i}}\int_{\Gamma} z^{m-1}e^{zt} g(z) A^{-1}(g(z)I+A)^{-1} Av dz.
  \end{aligned}
\end{equation*}
Now by noting the identity
\begin{equation*}
g(z)A^{-1}(g(z)I+A)^{-1}= A^{-1}-(g(z)I+A)^{-1}
\end{equation*}
and the fact that $\int_{\Gamma} z^{m-1}e^{zt}\,dz=0$ for $m\ge 1$,
we have
\begin{equation*} 
   \begin{aligned}
      S^{(m)}(t)v & = \frac{1}{2\pi\mathrm{i}}\int_{\Gamma}z^{m-1} e^{zt} v\, dz
      - \frac{1}{2\pi\mathrm{ i}}\int_{\Gamma} z^{m-1}e^{zt}(g(z)I+A)^{-1} \,dzA v\\
      &=-\frac{1}{2\pi\mathrm{i}}\int_{\Gamma} z^{m-1}e^{zt}(g(z)I+A)^{-1} \,dzA v.\\
   \end{aligned}
\end{equation*}
By \eqref{eqn:resolventg} we obtain
\begin{equation*}
  \|(g(z)I+A)^{-1}\| \le M |g(z)|^{-1}=M\bigg|\frac{1+\gamma z^\alpha}{z}\bigg| \leq
  M(|z|^{-1}+\gamma |z|^{\al-1}),
\end{equation*}
and thus using this estimate, we get
\begin{equation*}
  \begin{aligned}
    \|S^{(m)}(t)v\|_{L^2(\Omega)} & \le c\left(\int_{\Gamma} |z|^{m-1}e^{\Re(z)t}\|(g(z)I+A)^{-1}\| \,|dz|\right)\| Av \|_{L^2(\Omega)}\\
		&\le c\left(\int_{1/t}^\infty e^{-rt\cos\theta}(r^{m-2}+\g r^{m+\al-2}) \,dr\right.\\
    &\quad \left.+ \int_{-\pi+\theta}^{\pi-\theta} e^{\cos\psi} (t^{-m+1}+\g t^{-m+1-\al}) \,d\psi \right)
    \| Av \|_{L^2(\Omega)}\\
    &\leq c(t^{-m+1}+\g t^{-m+1-\al})\| Av \|_{L^2(\Omega)}.
  \end{aligned}
\end{equation*}
Since $t^{-m+1}\le T^{\al}t^{-m+1-\al}$ for $t\in(0,T]$, we deduce
\begin{equation*}
   \|S^{(m)}(t)v\|_{L^2(\Omega)} \le c_T t^{-m+1-\al}\| Av \|_{L^2(\Omega)}, \ \ t\in(0,T],
\end{equation*}
with $c_T=c(T^\al+\g)$. Lastly, note that (\ref{eqn:new2}) with $\nu=1$ is equivalent to (\ref{eqn:new1}) with $\nu=0$ and $v$ replaced by $Av$.
\end{proof}

\begin{remark}
We note that this argument is applicable to any sectorial operator $A$, including the Riemann-Liouville
fractional derivative operator in space \cite{JinLazarovPasciakZhou:2013a}.
\end{remark}

Further, the estimates in Theorem~\ref{thm:reg} imply the following result by interpolation.
\begin{remark}\label{thm:reg-init2}
The solution $S(t)v$ to problem \eqref{eqn:rsp} with $f\equiv 0$ satisfies
\begin{equation*}
    \| S^{(m)}(t)v\|_{\dH p} \le ct^{-m-(1-\alpha)(p-q)/2}\| v \|_{\dH q}\quad \forall t\in(0,T],
\end{equation*}
where for $m=0$ and $0\le q\le p \le 2$ or $m>0$ and $0\le p , ~~q\le 2$.
\end{remark}

\subsection{Further discussions on the behavior of the solution}

The estimate \eqref{eqn:new1} holds for any  $t>0$. However, in the case $\nu=1$ and $m=0$
we can improve this estimate for large $t>0$. Namely, if we apply the bound $\|AH(z)\|\le M$ from
\eqref{eqn:AH} in the estimate of \eqref{eqn:Asolop}, we get the following sharper bound for large $t$:
\begin{remark}
For $v\in L^2(\Omega)$ we have the following bound
\begin{equation}\label{eqn:larget}
\|A S(t)v\|_{L^2(\Omega)}\le c t^{-1}\|v\|_{L^2(\Omega)},\ \  t>0,
\end{equation}
which is sharper than \eqref{eqn:new1} for large $t$. This bound together with
\eqref{eqn:new1} with $\nu=0, m=1$, imply the following a priori estimate for the solution of problem \eqref{eqn:rsp}:
\begin{equation*}
\|\partial_t u\|_{L^2(\Omega)}+\| u\|_{\dH 2}+\|\partial_t^\alpha  u\|_{\dH 2}\le ct^{-1}\|v\|_{L^2(\Omega)}
\quad \mbox{for large $t>0$}.
\end{equation*}
\end{remark}

Further, by applying eigenfunction expansion, the solution of the Rayleigh-Stokes problem \eqref{eqn:rsp}
can be written in the form
\begin{equation*}
u(x,t)=\sum_{j=1}^\infty (v,\fy_j) u_j(t)\fy_j(x)+\sum_{j=1}^\infty \left(\int_0^t u_j(t-\tau)f_j(\tau)\,d\tau\right) \fy_j(x),
\end{equation*}
where $f_j(t)=( f(.,t),\fy_j)$ and $u_j(t)$ satisfies the following equation:
\begin{equation}\label{un}
 u'_j(t)+\la_j(1+\g \DD)u_j(t)=0, \ \ u_j(0)=1.
\end{equation}
 To solve (\ref{un}) we apply Laplace transform and use the identities
\begin{eqnarray}
  &&\mathcal{L}\{u'\}(z)=z \mathcal{L}\{ u\}(z)-u(0)\label{Lap}\\
  &&\mathcal{L}\{\DD u\}(z)=z^\al \mathcal{L}\{ u\}(z),\ \ \al\in (0,1),\label{LapD}
\end{eqnarray}
which hold for functions $u(t)$, continuous for $t>0$, and such that $u(0)$ is finite
\cite[equation (1.15)]{MainardiGorenflo:2007}. In this way, for the Laplace transform
of $u_j(t)$, one arrives at
\begin{equation*}
 \mathcal{L} \{u_j\}(z)=\frac{1}{z+\g\la_j z^\al+\la_j}.
\end{equation*}
Based on this representation, in the next theorem we summarize some properties of the
time-dependent components $u_j(t)$, which
are useful in the study of the solution behavior, including the inhomogeneous problem.

Recall that a function $u(t)$ is said to be completely monotone if and only if
\begin{equation*}
  (-1)^n u^{(n)}(t)\ge 0, \mbox{\ for\ all\ }t\ge 0, \ n=0,1,...
\end{equation*}

\begin{theorem} The functions $u_j(t),\ j=1,2,...,$ have the following properties:
	\begin{eqnarray*}
	&&u_j(0)=1,\  0<u_j(t)\le 1, \ t\ge 0,\label{oc1}\\
		&&u_j(t)\ \mbox { are completely\ monotone\ for\ } t\ge 0, \label{cm}\\
		&&|\la_j u_j(t)|\le c\min \{t^{-1},t^{\al-1}\},\  \ t> 0, \ \label{ocHn}\\
	&&\int_0^T |u_j(t)|\,dt<\frac{1}{\la_j},\ \ T>0.\label{intH}	
	\end{eqnarray*}
where the constant $c$ does not depend on $j$ and $t$.
\end{theorem}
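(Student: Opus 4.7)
The plan is to extract Properties 1, 2, and 4 from the complete monotonicity of $u_j$, and to obtain the pointwise decay in Property 3 by contour integration, parallel to the proof of Theorem~\ref{thm:reg}.

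For Property 2 I will invoke Bernstein's theorem in its Stieltjes-function form: $u_j$ is completely monotone on $[0,\infty)$ as soon as $\widehat{u}_j(z)=1/(z+\lambda_j(1+\gamma z^\alpha))$ extends analytically to $\mathbb{C}\setminus(-\infty,0]$, is positive on $(0,\infty)$, and satisfies $\Im\widehat{u}_j(z)\le 0$ when $\Im z>0$. The first two conditions are immediate from the principal branch of $z^\alpha$, and for the third, writing $z=re^{\mathrm{i}\psi}$ with $\psi\in(0,\pi)$ and using $\alpha\psi\in(0,\pi)$,
\begin{equation*}
\Im\bigl(z+\lambda_j(1+\gamma z^\alpha)\bigr)=r\sin\psi+\lambda_j\gamma r^\alpha\sin(\alpha\psi)>0,
\end{equation*}
which also rules out zeros of the denominator in the cut plane. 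Property 1 is then a corollary: CM yields $u_j\ge 0$ and $u_j$ nonincreasing, so $u_j(t)\le u_j(0)=1$, while the Bernstein representing measure is nonzero (its total mass equals $u_j(0)=1$) and hence $u_j(t)>0$ for all $t\ge 0$. Property 4 drops out of integrating \eqref{un} from $0$ to $T$: since $(I^{1-\alpha}u_j)(0)=0$,
\begin{equation*}
\lambda_j\int_0^T u_j(t)\,dt = 1-u_j(T)-\lambda_j\gamma(I^{1-\alpha}u_j)(T)<1,
\end{equation*}
the strictness coming from $u_j(T)>0$ and nonnegativity of the final term.

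For Property 3 I use the identity $z+\lambda_j(1+\gamma z^\alpha)=z(g(z)+\lambda_j)/g(z)$ to write
\begin{equation*}
\lambda_j u_j(t)=\frac{1}{2\pi\mathrm{i}}\int_\Gamma e^{zt}\frac{\lambda_j g(z)}{z(g(z)+\lambda_j)}\,dz,\qquad \Gamma=\Gamma_{1/t,\pi-\theta},\ \theta\in(0,\pi/2).
\end{equation*}
The main obstacle, and the only $j$-dependent input, is the uniform cancellation of $\lambda_j$ in the numerator. This is supplied by the geometric estimate
\begin{equation*}
|g(z)+\lambda_j|\ge\lambda_j\sin\theta\quad\text{for}\ g(z)\in\overline{\Sigma_{\pi-\theta}},
\end{equation*}
which is simply the distance from that closed sector to the point $-\lambda_j$ on the negative real axis, combined with the inclusion $g(z)\in\Sigma_{\pi-\theta}$ from Lemma~\ref{lem:est-g}. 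Once $\lambda_j$ is eliminated, substituting the two bounds $|g(z)|/|z|\le c\min(1,|z|^{-\alpha})$ of Lemma~\ref{lem:est-g} into the standard arc/rays splitting of $\Gamma$, exactly as in the proof of Theorem~\ref{thm:reg}, produces respectively $ct^{-1}$ and $ct^{\alpha-1}$, whose minimum is the stated bound. As a sanity check, Property 3 can also be read off from \eqref{eqn:new1} with $\nu=1,\,m=0$ and from \eqref{eqn:larget} applied to $v=\fy_j$, using $AS(t)\fy_j=\lambda_j u_j(t)\fy_j$ and $\|\fy_j\|_{L^2(\Omega)}=1$.
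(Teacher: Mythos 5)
Your proposal is correct, and for properties 1 and 3 it essentially coincides with the paper's argument: the paper also writes $u_j(t)=\frac{1}{2\pi\mathrm{i}}\int_\Gamma e^{zt}H(z,\la_j)\,dz$ with $H(z,\la_j)=\frac{g(z)}{z}(g(z)+\la_j)^{-1}$ and runs the contour estimate of Theorem \ref{thm:reg} with $A$ replaced by $\la_j$; your sector-distance bound $|g(z)+\la_j|\ge \la_j\sin\theta$ is just the scalar analogue of the resolvent estimate the paper invokes, and your "sanity check" via \eqref{eqn:new1} and \eqref{eqn:larget} applied to $v=\fy_j$ is a legitimate shortcut since the constants there are independent of $v$. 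Where you genuinely diverge is in properties 2 and 4. For complete monotonicity the paper deforms the Bromwich contour to a Hankel path and computes the Bernstein measure explicitly, obtaining the positive density $K_j(r)$; you instead invoke the abstract Stieltjes/Pick characterization (analytic continuation to the cut plane, positivity on $(0,\infty)$, $\Im\widehat u_j\le 0$ in the upper half-plane), using the same no-zero computation for the denominator. This is shorter and avoids computing $K_j$, at the price of citing a less elementary theorem and of two routine points you should make explicit: the constant term in the Stieltjes representation vanishes (so no $\delta$-component, e.g. because $\widehat u_j(x)\to0$ as $x\to\infty$), and $u_j(0)=1$ (initial value theorem, as in the paper), which also identifies the total mass. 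For the integral bound the paper introduces the auxiliary completely monotone functions $v_j$ and uses $u_j=-v_j'/\la_j$, whereas you integrate \eqref{un} directly over $(0,T)$ and use $(I^{1-\alpha}u_j)(0^+)=0$ together with $u_j(T)>0$; this is arguably more direct and dispenses with $v_j$ altogether, needing only the easy justification that $u_j'$ and $\DD u_j$ are integrable near $t=0$ (which follows since $u_j$ is bounded, continuous at $0$ and smooth on $(0,\infty)$). Both routes are sound; the paper's buys an explicit spectral density reusable elsewhere, yours buys brevity.
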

\begin{proof}
We introduce the auxiliary functions $v_j(t)$ defined by their Laplace transforms
\begin{equation}\label{Gdef}
\mathcal{L} \{v_j\}(z)=\frac{1+\g\la_j  z^{\al-1}}{z+\g\la_j  z^\al+\la_j}.
\end{equation}
By the property of the Laplace transform $ u(0)=\lim_{z\to +\infty} z \widehat{u}(z)$
we obtain $u_j(0)=1$ and $v_j(0)=1$. Further, taking the inverse Laplace transform of
\eqref{un}, we get
\begin{equation*}
  u_j(t)=\frac{1}{2\pi\mathrm{i}} \int_{Br}e^{zt}\frac{1}{z+\g\la_j z^\al+\la_j}\,dz,
\end{equation*}
where $Br=\{z;\ \Re z=\sigma,\ \sigma>0\}$ is the Bromwich path \cite{Widder:1941}. The function under the
integral has a branch point $0$, so we cut off the negative part of the real axis. Note
that the function $z+\g\la_j  z^\al+\la_j$ has no zero in the main sheet of the Riemann
surface including its boundaries on the cut. Indeed, if $z=\varrho e^{i\theta}$, with
$\varrho>0$, $\theta\in(-\pi,\pi)$, then
\begin{equation*}
  \Im \{z+\g\la_j  z^\al+\la_j\}=\varrho\sin\theta +\g\la_j\varrho^\al\sin\al\theta\neq 0,\quad \theta\neq 0,
\end{equation*}
since $\sin\theta$ and $\sin\al\theta$ have the same sign and $\la_j, \g>0$. Hence,
$u_j(t)$ can be found by bending the Bromwich path into the Hankel path $Ha(\varepsilon)$,
which starts from $-\infty$ along the lower side of the negative real axis, encircles
the disc $|z|=\varepsilon$ counterclockwise and ends at $-\infty$ along the upper side of the
negative real axis. By taking $\varepsilon\to 0$ we obtain
\begin{equation*}
  u_j(t)= \int_0^\infty e^{-rt}K_j(r)\,dr,
\end{equation*}
where
\begin{equation*}
 K_j(r)=\frac{\g}{\pi} \frac{\la_j r^{\al} \sin\al \pi}{(-r+\la_j \g r^\al \cos\al \pi+\la_j)^2+(\la_j \g r^\al \sin\al \pi)^2}.
\end{equation*}
Since $\al\in(0,1)$, and $\la_j,\g>0$, there holds $K_j(r)>0$ for all $r>0$. Hence,
by Bernstein's theorem, $u_j(t)$ are completely monotone functions. In particular,
they are positive and monotonically decreasing. This shows the first two assertions. 

In the same way we prove that the functions $v_j(t)$ are completely monotone and
hence $0<v_j(t)\le 1$. By \eqref{Lap}, and \eqref{Gdef},
$$
\mathcal{L}\{v_j'\}(z) =z\mathcal{L}\{v_j\}(z)-v_j(0)=z\mathcal{L}\{v_j\}(z)-1=-\la_j\mathcal{L}\{u_j\}(z),
$$
which, upon taking the inverse Laplace transform, implies
$ 
  u_j(t)=- v_j'(t)/\la_j.
$ 
Now the third assertion follows by
\begin{equation*}
  \int_0^T |u_j(t)|\,dt =\int_0^T u_j(t)\,dt=-\frac{1}{\la_j}\int_0^T v'_j(t)\,dt=\frac{1}{\la_j}(1-v_j(T))<\frac{1}{\la_j}.
\end{equation*}
Last, using the representation
\begin{equation*}
  u_j(t)=\frac{1}{2\pi\mathrm{i}} \int_{\Gamma}e^{zt}\frac{1}{z+\g\la_j z^\al+\la_j}\,dz=\frac{1}{2\pi\mathrm{i}} \int_{\Gamma}e^{zt}H(z,\la_j)\,dz
\end{equation*}
with
\begin{equation*}
  H(z,\la_j)=\frac{g(z)}{z}(g(z)+\la_j)^{-1},
\end{equation*}
where the function $g(z)$ is defined as in \eqref{eqn:HH}, the last assertion
follows by applying the argument from the proof of Theorem~\ref{thm:reg} with $A$
replaced by $\la_j>0$ and using the following estimate analogous to \eqref{eqn:AH}:
\begin{equation*}
|\la_jH(z,\la_j)|\le M\min(1,|z|^{-\alpha}),\quad z\in\Sigma_{\pi-\theta}.
\end{equation*}
This completes the proof of the proposition.
\end{proof}

By Theorem \ref{thm:reg}, for any $\alpha\in(0,1)$, the solution
operator $S$ has a smoothing property in space of order two. In the limiting case
$\alpha=1$, however, it does not have any smoothing property.
To see this, we consider the eigenfunction expansion:
\begin{equation}\label{eqn:eigexpansion}
u(x,t) = S(t)v = \sum_{j=1}^\infty (v,\fy_j)u_j(t)\fy_j(x).
\end{equation}
In the case $\alpha =1$ we deduce from \eqref{un} and \eqref{Lap}
\begin{equation*}
 \mathcal{L} \{u_j\}(z)=\frac{1+\gamma\lambda_j}{z+\gamma\lambda_jz+\lambda_j}, \quad
 \mbox{which implies} \quad u_j(t)=e^{-\frac{\lambda_j}{1+\gamma\lambda_j}t}.
\end{equation*}
This shows that the problem does not have smoothing property.

\begin{remark}
We observe that if $v\in L^2\II$, then $\| u(t) \|_{\dot H^2\II}$
behaves like $t^{\al-1}$ as $t\rightarrow0$.  This behavior is the identical with
that of the solution to the subdiffusion equation; see \cite[Theorem 4.1]{McLean:2010}
and \cite[Theorem 2.1]{SakamotoYamamoto:2011}. However, as $t\rightarrow \infty$, $\| u(t) \|_{\dot H^2\II}$
 decays like $t^{-1}$, as in the case of standard diffusion equation. The solution
 $u(t)$ of \eqref{eqn:rsp} decays like $t^{-1}$ for $t\rightarrow \infty$. This is faster than $t^{\al-1}$, the decay
of the solution to subdiffusion equation \cite[Corollary 2.6]{SakamotoYamamoto:2011},
but much slower than the exponential decay for the diffusion equation.
\end{remark}

We may extend Theorem \ref{thm:reg} to the case of very weak initial data, i.e., $v\in \dH q$ with $-1<q<0$.
Obviously, for any $t > 0$ the function $u(t) = S(t)v$ satisfies equation \eqref{eqn:rsp} in the sense of 
$\dH q$. Then we appeal to the expansion \eqref{eqn:eigexpansion}. Repeating the argument of Theorem 
\ref{thm:reg} yields $\|  S(t)v-v \|_{\dH q} \le c \| v \|_{\dH q}.$ By Lebesgue's dominated convergence theorem we deduce
\begin{equation*}
  \lim_{t \rightarrow 0^+} \| S(t)-v \|_{\dH q}^2
  = \lim_{t \rightarrow 0^+} \sum_{j=1}^\infty \la_j^q (u_j(t)-1)^2(v,\fy_j)^2=0.
\end{equation*}
Hence, the function $u(t) = S(t)v$ satisfies \eqref{eqn:rsp} and for $t\rightarrow 0$
converges to $v$ in $\dH q$, i.e., $u(t)=S(t)v$ does represent a solution. Further,
the argument of Theorem \ref{thm:reg} yields $u(t)=S(t)v \in \dH {2+q}$ for any $t>0$.

\section{Semidiscrete Galerkin Finite element method}\label{sec:semidiscrete}
In this section we consider the space semidiscrete finite element approximation and
derive optimal error estimates for the homogeneous problem.

\subsection{Semidiscrete Galerkin scheme}
First we recall the $L^2\II$-orthogonal projection $P_h:L^2(\Omega)\to X_h$ and
the Ritz projection $R_h:H^1_0(\Omega)\to X_h$, respectively, defined by
\begin{equation*}
  \begin{aligned}
    (P_h \fy,\chi) & =(\fy,\chi) \quad\forall \chi\in X_h,\\
    (\nabla R_h \fy,\nabla\chi) & =(\nabla \fy,\nabla\chi) \quad \forall \chi\in X_h.
  \end{aligned}
\end{equation*}
For $\fy \in \dH {-s}$ for $0< s \le 1$, the $L^2\II$-projection $P_h$ is not well-defined.
Nonetheless, one may view $(\fy,\chi)$ for $\chi\in X_h \subset \dot H^{s}$
as the duality pairing between the spaces $\dH s$ and $\dH {-s}$ and define $P_h$ in the same manner.

The Ritz projection $R_h$ and the $L^2$-projection $P_h$ have the following properties.
\begin{lemma}\label{lem:prh-bound}
Let the mesh $X_h$ be quasi-uniform. Then the operators $R_h$ and $P_h$ satisfy:
\begin{eqnarray*}
  &   \|R_h \fy-\fy\|_{L^2(\Omega)}+h\|\nabla(R_h \fy-\fy)\|_{L^2(\Omega)}\le ch^q\| \fy\|_{\dot H^q(\Omega)}\quad
   \forall\fy \in \dot H^q(\Omega), \ q=1,2,\\
  & \|P_h \fy-\fy\|_{L^2(\Omega)}+h\|\nabla(P_h \fy-\fy)\|_{L^2(\Omega)}\le ch^q\| \fy\|_{\dot H^q(\Omega)}\quad
   \forall\fy \in \dot H^q(\Omega), \ q=1,2.
\end{eqnarray*}
In addition, $P_h$ is stable on $\dH {q}$ for $-1\le q \le1$.
\end{lemma}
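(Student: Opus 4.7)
The plan is to derive these bounds from standard finite element approximation theory, following Thom\'ee \cite{Thomee:2006}; the quasi-uniformity hypothesis enters only through the inverse inequality. I will outline the four ingredients in turn.

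For the Ritz projection bounds, I would first observe that Galerkin orthogonality $(\nabla(R_h\varphi - \varphi),\nabla\chi)=0$ for all $\chi \in X_h$ gives, by C\'ea's lemma, the quasi-best approximation estimate
\begin{equation*}
\|\nabla(R_h\varphi - \varphi)\|_{L^2(\Omega)} \le \inf_{\chi \in X_h}\|\nabla(\varphi-\chi)\|_{L^2(\Omega)}.
\end{equation*}
Taking $\chi$ to be the Scott--Zhang (or Lagrange) interpolant of $\varphi$ and invoking standard interpolation estimates yields $\|\nabla(R_h\varphi-\varphi)\|_{L^2(\Omega)} \le c h^{q-1} \|\varphi\|_{\dH q}$ for $q=1,2$. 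The $L^2(\Omega)$ bound is obtained by the Aubin--Nitsche duality trick: for $\psi$ solving $-\Delta\psi = R_h\varphi - \varphi$ with homogeneous Dirichlet data, the convexity of $\Omega$ gives $\|\psi\|_{\dH 2} \le c\|R_h\varphi - \varphi\|_{L^2(\Omega)}$, and then $\|R_h\varphi-\varphi\|_{L^2(\Omega)}^2 = (\nabla(R_h\varphi-\varphi), \nabla(\psi - R_h\psi))$ combined with the just-proved $H^1$-estimates produces the desired $O(h^q)$ bound.

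For the $L^2$-projection, the $L^2(\Omega)$ estimate is immediate from the best-approximation property $\|P_h\varphi - \varphi\|_{L^2(\Omega)} \le \|I_h\varphi - \varphi\|_{L^2(\Omega)}$ and standard interpolation. For the $H^1$ bound I would split
\begin{equation*}
\nabla(P_h\varphi - \varphi) = \nabla(P_h\varphi - R_h\varphi) + \nabla(R_h\varphi - \varphi),
\end{equation*}
control the second term by the Ritz estimate, and apply an inverse inequality to the first term in $X_h$, i.e., $\|\nabla(P_h\varphi - R_h\varphi)\|_{L^2(\Omega)} \le ch^{-1}\|P_h\varphi - R_h\varphi\|_{L^2(\Omega)}$, followed by the triangle inequality and the $L^2$ bounds for $P_h$ and $R_h$. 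Here quasi-uniformity is essential.

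For stability of $P_h$ on $\dH q$ with $-1 \le q \le 1$, I would establish the three endpoints $q=0,1,-1$ and interpolate. The case $q=0$ is immediate since $P_h$ is an orthogonal projection. The case $q=1$ follows from $\|\nabla P_h\varphi\|_{L^2(\Omega)} \le \|\nabla(P_h\varphi - \varphi)\|_{L^2(\Omega)} + \|\nabla \varphi\|_{L^2(\Omega)}$ together with the $H^1$ error bound just established (with $q=1$). For $q = -1$, I would argue by duality: for $\varphi \in \dH{-1}$ and $\chi \in H_0^1(\Omega)$,
\begin{equation*}
(P_h\varphi,\chi) = (\varphi, P_h\chi) \le \|\varphi\|_{\dH{-1}} \|P_h\chi\|_{\dH 1} \le c\|\varphi\|_{\dH{-1}}\|\chi\|_{\dH 1},
\end{equation*}
which on taking the supremum yields $\|P_h\varphi\|_{\dH{-1}} \le c\|\varphi\|_{\dH{-1}}$. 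The range $q \in (-1,1)$ is then filled in by interpolation. None of the steps is intrinsically hard; the closest thing to a main obstacle is the $H^1$-stability of $P_h$, which genuinely requires the quasi-uniform mesh hypothesis via the inverse estimate, and without which stability can fail.
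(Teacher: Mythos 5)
Your argument is correct: the paper states this lemma without proof, as it is standard finite element approximation theory, and your route (C\'ea's lemma plus Aubin--Nitsche duality for $R_h$ on the convex domain, best approximation in $L^2(\Omega)$ and the inverse-inequality splitting through $R_h$ for the gradient bound on $P_h$, and endpoint stability at $q=-1,0,1$ combined with interpolation on the $\dot H^q(\Omega)$ scale) is exactly the standard one the authors implicitly rely on. The only point worth making explicit is the chain $(P_h\varphi,\chi)=(P_h\varphi,P_h\chi)=\langle\varphi,P_h\chi\rangle$ justifying your duality step when $\varphi\in \dot H^{-1}(\Omega)$, since there $P_h$ is defined through the duality pairing; with that noted, the proof is complete.
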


Upon introducing the discrete Laplacian $\Delta_h: X_h\to X_h$ defined by
\begin{equation}\label{eqn:Delh}
  -(\Delta_h\fy,\chi)=(\nabla\fy,\nabla\chi)\quad\forall\fy,\,\chi\in X_h,
\end{equation}
and $f_h= P_h f$, we may write the spatially discrete problem \eqref{eqn:fem} as
to find $u_h \in X_h$ such that
\begin{equation}\label{eqn:femop}
\partial_t u_h - (1 +\g \partial_t^\al)\Delta_h u_h = f_h, \quad u_h(0)=v_h,
\end{equation}
where  $v_h\in X_h$ is a suitable approximation to the initial condition $v$. Accordingly, the
solution operator $S_h(t)$ for the semidiscrete problem \eqref{eqn:fem} is given by
\begin{equation}\label{eqn:disc-solop}
S_h(t)=\frac{1}{2\pi\mathrm{i}}\int_\Gamma e^{z t} H_h(z) \, dz \quad \mbox{with}
\quad   H_h(z)=\frac{g(z)}{z}(g(z)I+A_h)^{-1},
\end{equation}
where $\Gamma$ is the contour defined in \eqref{eqn:Gamma} and
$A_h=-\Delta_h$. Further, with the eigenpairs $\{(\la_j^h,\fy_j^h)\}$ of the discrete
Laplacian $-\Delta_h$, we define the discrete norm $|||\cdot|||_{\dot H^p(\Omega)}$ on the space $X_h$  for
any $p\in\mathbb{R}$
\begin{equation*}
  |||{\fy}|||_{\dot H^p(\Omega)}^2 =
      \sum_{j=1}^N(\la_j^h)^p(\fy,\fy_j^h)^2\quad \forall\fy\in X_h.
\end{equation*}

The stability of the operator $S_h(t)$ is given below. The proof
is similar to that of Theorem \ref{thm:reg}, and hence omitted.
\begin{lemma}\label{lem:discreg}
Let $S_h(t)$ be defined by \eqref{eqn:femop} and $v_h \in X_h$. Then
\begin{equation*}
|||S_h^{(m)}(t)v_h |||_{\dot H^p(\Omega)} \le ct^{-m-(1-\al)(p-q)/2}|||v_h |||_{\dot H^q(\Omega)}, \quad \forall 0<t\le T,
\end{equation*}
where for $m=0$ and $0\le q\le p \le 2$ or $m>0$ and $0\le p , ~~q\le 2$.
\end{lemma}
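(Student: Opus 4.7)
The plan is to mirror the proof of Theorem \ref{thm:reg} verbatim in the discrete setting, exploiting the fact that $A_h=-\Delta_h$ is self-adjoint and positive definite on $X_h$ equipped with the $L^2\II$ inner product, so all relevant resolvent bounds transfer to $A_h$ with constants independent of $h$. Explicitly, for any $\theta\in(0,\pi/2)$ the spectral decomposition yields
\begin{equation*}
  |||(z+A_h)^{-1}\fy|||_{\dot H^p\II}\le M|z|^{-1}|||\fy|||_{\dot H^p\II},\qquad z\in\Sigma_{\pi-\theta},
\end{equation*}
for every $p\in\mathbb{R}$, with the same constant $M$ as in \eqref{eqn:resolvent}. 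Combined with Lemma \ref{lem:est-g}, this gives the discrete analogue of \eqref{Hcontrol},
$$|||H_h(z)|||_{\dot H^p\II\to\dot H^p\II}\le M/|z|,\qquad z\in\Sigma_{\pi-\theta}.$$

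First I would treat the diagonal case $p=q$ (which covers $m\ge 0$, $p=q\in[0,2]$). Differentiating \eqref{eqn:disc-solop} under the integral sign gives
\begin{equation*}
  S_h^{(m)}(t)v_h=\frac{1}{2\pi\mathrm{i}}\int_\Gamma z^m e^{zt}H_h(z)v_h\,dz,
\end{equation*}
and splitting $\Gamma=\Gamma_{1/t,\pi-\theta}$ into its two rays and the arc produces the bound $ct^{-m}$ exactly as in the first estimate of Theorem \ref{thm:reg}; the calculation depends on $A$ only through the uniform resolvent constant and so transfers unchanged to $A_h$.

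Next I would handle the endpoint $p=q+2$ (i.e.\ the maximal spatial smoothing). Using the identity $A_hH_h(z)=(-H_h(z)+z^{-1}I)g(z)$ and the bound $|z^{-1}g(z)|\le c\min(1,|z|^{-\alpha})$ from Lemma \ref{lem:est-g}, one gets
$$|||A_hH_h(z)\fy|||_{\dot H^q\II}\le c|z|^{-\alpha}|||\fy|||_{\dot H^q\II},$$
and inserting this into the contour integral gives the $t^{-m-(1-\alpha)}$ decay, again by the same splitting into rays and arc as in the derivation of \eqref{eqn:new1} with $\nu=1$. For $m=0$ the endpoint $p-q=2$ is valid, while for $m\ge 1$ one may also consider $p-q<0$ by first factoring $v_h=A_h^{(q-p)/2}(A_h^{(p-q)/2}v_h)$ and arguing as in the $\nu=0$ case of \eqref{eqn:new2}, where the extra time derivative absorbs the singular factor $|z|^{-1}$. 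The intermediate exponents $q<p<q+2$ then follow by interpolation on the Hilbert scale generated by $A_h$, which is legitimate because $|||\cdot|||_{\dot H^p\II}$ is by definition the norm of that scale.

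The only real obstacle is bookkeeping: one must check that every contour estimate in the proof of Theorem \ref{thm:reg} depended on the continuous operator $A$ solely through the resolvent bound \eqref{eqn:resolvent} and the algebraic identities for $g$ and $H$, so that replacing $A$ by $A_h$ and $\|\cdot\|_{L^2\II}$ by $|||\cdot|||_{\dot H^p\II}$ throughout yields the claim with constants independent of $h$. Since this verification is routine and no new analytic ingredient appears, the proof is indeed a direct transcription and may be omitted, as the authors do.
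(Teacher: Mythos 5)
Your proposal is correct and follows exactly the route the paper intends: the authors omit the proof precisely because it is the argument of Theorem \ref{thm:reg} repeated with $A$ replaced by $A_h$ and the norms replaced by the discrete norms $|||\cdot|||_{\dot H^p(\Omega)}$, with the resolvent bound holding uniformly in $h$ since $A_h$ is self-adjoint and positive definite, followed by interpolation on the discrete Hilbert scale (the same step as in Remark \ref{thm:reg-init2}). Your treatment of the diagonal case, the endpoints $p-q=\pm 2$ via the identities for $A_hH_h(z)$ and $(g(z)I+A_h)^{-1}$, and the interpolation for intermediate exponents matches the intended proof, so no further comment is needed.
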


Now we derive error estimates for the semidiscrete Galerkin scheme \eqref{eqn:femop} using an operator
trick, following the interesting work of Fujita and Suzuki \cite{FujitaSuzuki:1991}.
We note that similar estimates follow also from the technique in \cite{JinLazarovZhou:2013},
but at the expense of an additional logarithmic factor $|\ln h|$ in the case of nonsmooth
initial data.

The following lemma plays a key role in deriving error estimates.
\begin{lemma}\label{lem:keylem}
For any $ \fy\in H_0^1(\Omega)$ and $z\in \Sigma_{\pi-\theta}=\left\{ z: |\arg(z)|\le \pi-\theta \right\}$
for $\theta\in(0,\pi/2)$, there holds
\begin{equation}
    |g(z)| \| \fy \|_{L^2(\Omega)}^2 + \| \nabla \fy \|_{L^2(\Omega)}^2
    \le c\left|g(z)\| \fy \|_{L^2(\Omega)}^2 + (\nabla\fy,\nabla\fy)\right|.
\end{equation}
\end{lemma}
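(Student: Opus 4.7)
The plan is to reduce the inequality to a purely scalar claim about complex numbers in the sector $\Sigma_{\pi-\theta}$. I would set $a = \|\varphi\|_{L^2(\Omega)}^2$ and $b = (\nabla\varphi,\nabla\varphi) = \|\nabla\varphi\|_{L^2(\Omega)}^2$, both nonnegative reals, and put $w = g(z)$. With this notation the assertion of the lemma is simply
\begin{equation*}
|w|\, a + b \le c\,|wa + b|, \qquad a, b \ge 0,
\end{equation*}
so all the functional-analytic content is stripped away and only the location of $w$ in the complex plane matters. By Lemma \ref{lem:est-g}, $w = g(z) \in \Sigma_{\pi-\theta}$.

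Writing $w = r e^{i\psi}$ with $r = |w|$ and $|\psi| \le \pi - \theta$, I would expand
\begin{equation*}
|wa + b|^2 = (ra\cos\psi + b)^2 + (ra\sin\psi)^2 = r^2 a^2 + 2rab\cos\psi + b^2,
\end{equation*}
and use $\cos\psi \ge \cos(\pi - \theta) = -\cos\theta$ to obtain
\begin{equation*}
|wa + b|^2 \ge r^2 a^2 - 2rab\cos\theta + b^2.
\end{equation*}
The problem then reduces to the algebraic inequality $r^2 a^2 - 2rab\cos\theta + b^2 \ge c^{-2}(ra + b)^2$, to be proved for a constant $c = c(\theta) > 0$ and all $a, b \ge 0$, $r > 0$.

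I would close by homogeneity. After disposing of the trivial case $b = 0$ (where $\varphi = 0$ in $H_0^1(\Omega)$ forces $a = 0$ as well), set $t = ra/b > 0$ and study $f(t) = (t^2 - 2t\cos\theta + 1)/(t+1)^2$. A routine derivative calculation shows $f$ has its unique critical point on $(0,\infty)$ at $t = 1$, with $f(1) = (1 - \cos\theta)/2 = \sin^2(\theta/2)$, while $f(t) \to 1$ as $t \to 0^+$ or $t \to \infty$. Hence $f(t) \ge \sin^2(\theta/2)$ throughout, so the lemma holds with $c = 1/\sin(\theta/2)$, depending only on $\theta$. There is no real obstacle beyond this bookkeeping; the geometric content is simply that a complex number in $\Sigma_{\pi-\theta}$ added to a nonnegative real cannot suffer enough cancellation for its modulus to drop below a fixed fraction of the sum of the individual moduli, which is exactly what the bound on $\cos\psi$ quantifies.
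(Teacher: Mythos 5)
Your proof is correct and takes essentially the same route as the paper: reduce the claim to the scalar inequality $\gamma|w|+\beta\le |\gamma w+\beta|/\sin(\theta/2)$ for $\gamma,\beta\ge 0$ and $w\in\Sigma_{\pi-\theta}$, and then invoke Lemma \ref{lem:est-g} to place $w=g(z)$ in that sector. The only difference is that you actually prove the elementary scalar inequality (arriving at the same constant $1/\sin(\theta/2)$), whereas the paper simply states it, citing \cite[Lemma 7.1]{FujitaSuzuki:1991} as an alternative.
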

\begin{proof}
By \cite[Lemma 7.1]{FujitaSuzuki:1991}, we have that for any $z\in \Sigma_{\pi-\theta}$
\begin{equation*}
    |z| \| \fy \|_{L^2(\Omega)}^2 + \| \nabla \fy \|_{L^2(\Omega)}^2 \le c\left|z\| \fy \|_{L^2(\Omega)}^2 + (\nabla\fy,\nabla\fy)\right|.
\end{equation*}
Alternatively, it follows from the inequality
\begin{equation*}
  \gamma |z| + \beta \leq \frac{|\gamma z+\beta|}{\sin\frac\theta2}\quad \mbox{for } \gamma,\beta\geq 0, z\in \Sigma_{\pi-\theta},
\end{equation*}
with the choice $\gamma=\|\fy\|_{L^2(\Omega)}^2$ and $\beta=\|\nabla \fy\|_{L^2(\Omega)}^2=(\nabla \fy,\nabla \fy)$.
By Lemma \ref{lem:est-g}, $g(z)\in\Sigma_{\pi-\theta}$ for all $z\in \Sigma_{\pi-\theta}$, and
this completes the proof.
\end{proof}

The next lemma shows an error estimate between $(g(z)I+A)^{-1}v$ and its discrete analogue $(g(z)I+A_h)^{-1}P_h v$.
\begin{lemma}\label{lem:wbound}
Let $ v\in L^2(\Omega) $,  $z\in \Sigma_{\pi-\theta}$,
 $w=(g(z)I+A)^{-1}v$, and $w_h=(g(z)I+A_h)^{-1}P_h v$. Then there holds
\begin{equation}\label{eqn:wboundHa}
    \|  w_h-w \|_{L^2(\Omega)} + h\| \nabla (w_h-w)\|_{L^2(\Omega)}
    \le ch^2 \| v  \|_{L^2(\Omega)}.
\end{equation}
\end{lemma}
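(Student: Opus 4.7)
The plan is to use Galerkin orthogonality together with the standard Ritz-projection splitting of the error, applying Lemma \ref{lem:keylem} to control the $L^2$ part and the inverse inequality on the quasi-uniform mesh to handle the gradient part. The main difficulty will be that energy bounds naturally produce factors of $|g(z)|^{1/2}$ in $\|\nabla(w_h-w)\|$, which is problematic because $|g(z)|$ is unbounded on $\Sigma_{\pi-\theta}$; the inverse inequality is what lets us bypass this.

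First I would set up two ingredients. (i) Elliptic regularity for $w$: since $g(z)w+Aw=v$, the resolvent bound \eqref{eqn:resolventg} gives $|g(z)|\|w\|_{L^2(\Omega)}\le M\|v\|_{L^2(\Omega)}$, hence
\[
\|w\|_{\dH 2}=\|Aw\|_{L^2(\Omega)}\le \|v\|_{L^2(\Omega)}+|g(z)|\|w\|_{L^2(\Omega)}\le c\|v\|_{L^2(\Omega)}.
\]
(ii) Galerkin orthogonality: $w$ and $w_h$ satisfy
\[
g(z)(w,\chi)+(\nabla w,\nabla\chi)=(v,\chi),\qquad g(z)(w_h,\chi)+(\nabla w_h,\nabla\chi)=(P_h v,\chi)=(v,\chi),
\]
for every $\chi\in X_h$, so $g(z)(w_h-w,\chi)+(\nabla(w_h-w),\nabla\chi)=0$ on $X_h$.

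Next I would split $w_h-w=\theta+\eta$ with $\eta=R_h w-w$ and $\theta=w_h-R_h w\in X_h$. The Ritz-projection identity $(\nabla\eta,\nabla\chi)=0$ reduces the orthogonality relation to
\[
g(z)(\theta,\chi)+(\nabla\theta,\nabla\chi)=-g(z)(\eta,\chi),\qquad \forall\chi\in X_h.
\]
Taking $\chi=\theta$ and applying Lemma \ref{lem:keylem} yields
\[
|g(z)|\|\theta\|_{L^2(\Omega)}^2+\|\nabla\theta\|_{L^2(\Omega)}^2\le c\bigl|g(z)(\theta,\theta)+(\nabla\theta,\nabla\theta)\bigr|=c|g(z)|\,|(\eta,\theta)|\le c|g(z)|\,\|\eta\|_{L^2(\Omega)}\|\theta\|_{L^2(\Omega)},
\]
so $\|\theta\|_{L^2(\Omega)}\le c\|\eta\|_{L^2(\Omega)}$. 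Using Lemma \ref{lem:prh-bound} and the elliptic regularity above, $\|\eta\|_{L^2(\Omega)}+h\|\nabla\eta\|_{L^2(\Omega)}\le ch^2\|w\|_{\dH 2}\le ch^2\|v\|_{L^2(\Omega)}$, so $\|\theta\|_{L^2(\Omega)}\le ch^2\|v\|_{L^2(\Omega)}$, and by the triangle inequality $\|w_h-w\|_{L^2(\Omega)}\le ch^2\|v\|_{L^2(\Omega)}$.

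Finally, for the gradient part I would avoid the naive bound $\|\nabla\theta\|\le c|g(z)|^{1/2}\|\eta\|$ coming directly from the energy identity (which still contains $|g(z)|$) and instead use the inverse inequality on the quasi-uniform family $\{\mathcal{T}_h\}$, namely $\|\nabla\theta\|_{L^2(\Omega)}\le ch^{-1}\|\theta\|_{L^2(\Omega)}\le ch^{-1}\|\eta\|_{L^2(\Omega)}\le ch\|v\|_{L^2(\Omega)}$. Combined with $\|\nabla\eta\|_{L^2(\Omega)}\le ch\|v\|_{L^2(\Omega)}$ this gives $\|\nabla(w_h-w)\|_{L^2(\Omega)}\le ch\|v\|_{L^2(\Omega)}$, and multiplying by $h$ and adding to the $L^2$ bound yields the claimed estimate \eqref{eqn:wboundHa}.
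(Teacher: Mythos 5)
Your proof is correct, but it follows a genuinely different route from the paper. The paper does not split the error via the Ritz projection: it works with $e=w-w_h$ directly, applies Lemma \ref{lem:keylem} to $e$ and compares with the Lagrange interpolant $\pi_h w$, which after inserting the bounds $\|\nabla w\|_{L^2(\Omega)}\le c|g(z)|^{-1/2}\|v\|_{L^2(\Omega)}$ and $\|w\|_{\dH 2}\le c\|v\|_{L^2(\Omega)}$ yields $|g(z)|\|e\|_{L^2(\Omega)}^2+\|\nabla e\|_{L^2(\Omega)}^2\le ch^2\|v\|_{L^2(\Omega)}^2$; this gives the gradient bound without any inverse inequality, and the $L^2(\Omega)$ bound is then obtained by a separate duality argument with $\psi=(g(z)I+A)^{-1}\fy$, $\psi_h=(g(z)I+A_h)^{-1}P_h\fy$. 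Your argument instead exploits that the coefficient $g(z)$ multiplies only the $L^2$ pairing, so the Ritz identity cancels the gradient term in the error equation for $\theta=w_h-R_hw$, the factor $|g(z)|$ cancels from both sides, and the optimal $L^2(\Omega)$ estimate drops out of the splitting with no duality step; the price is that the gradient estimate is then recovered through the inverse inequality $\|\nabla\theta\|_{L^2(\Omega)}\le ch^{-1}\|\theta\|_{L^2(\Omega)}$, which requires the quasi-uniformity of the mesh family (assumed in the paper, so this is legitimate here), whereas the paper's energy argument for the gradient works on merely shape-regular meshes. All individual steps check out: the reduced error equation $g(z)(\theta,\chi)+(\nabla\theta,\nabla\chi)=-g(z)(\eta,\chi)$ is right, Lemma \ref{lem:keylem} applies to $\theta\in X_h\subset H_0^1(\Omega)$ since $g(z)\in\Sigma_{\pi-\theta}$, the bound $\|w\|_{\dH 2}\le c\|v\|_{L^2(\Omega)}$ follows from \eqref{eqn:resolventg} exactly as in the paper, and Lemma \ref{lem:prh-bound} gives the Ritz error bounds for $\eta$; so your conclusion \eqref{eqn:wboundHa} holds with the same constants' dependence.
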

\begin{proof}
By the definition, $w$ and $w_h$ respectively satisfy
\begin{equation*}
  \begin{aligned}
    g(z)(w,\chi)+(\nabla w,\nabla \chi)&=(v,\chi),\quad \forall \chi \in V,\\
    g(z)(w_h,\chi)+(\nabla w,\nabla \chi)&=(v,\chi),\quad \forall \chi\in V_h.
  \end{aligned}
\end{equation*}
Subtracting these two identities yields the following orthogonality relation for the error $e=w-w_h$:
\begin{equation}\label{eqn:worthog}
    g(z)(e,\chi) + (\nabla e, \nabla \chi) = 0, \quad \forall\chi\in V_h.
\end{equation}
This and Lemma \ref{lem:keylem} imply that for any $\chi\in V_h$
\begin{equation*}
    \begin{split}
        |g(z)| \| e\|_{L^2(\Omega)}^2  + \| \nabla e \|_{L^2(\Omega)}^2
        & \le c \left|g(z)\| e \|_{L^2(\Omega)}^2 + (\nabla e, \nabla e)\right| \\
        & = c \left|g(z)(e,w-\chi) + (\nabla e, \nabla(w-\chi))\right|.
     \end{split}
\end{equation*}
By taking $\chi=\pi_h w$, the Lagrange interpolant of $w$,
and using the Cauchy-Schwarz inequality, we arrive at
\begin{equation}\label{eqn:control2}
    \begin{aligned}
     |g(z)| \| e \|_{L^2\II}^2 + \| \nabla e \|_{L^2(\Omega)}^2
          & \le c \left(|g(z)| h\|e\|_{L^2\II} \|\nabla w\|_{L^2\II}+
          h \|\nabla e\|_{L^2\II} \| w \|_{\dot H^2 \II} \right).
     \end{aligned}
\end{equation}
Appealing again to Lemma \ref{lem:keylem} with the choice $\fy=w$, we obtain
\begin{equation*}
    |g(z)| \|w \|_{L^2\II}^2 + \|\nabla w\|_{L^2\II}^2
    \le c|((g(z)I+A)w,w)|\le c\| v \|_{L^2\II}\| w\|_{L^2\II}.
\end{equation*}
Consequently
\begin{equation}\label{eqn:wbound2}
    \|w \|_{L^2\II} \le c|g(z)|^{-1}\| v \|_{L^2\II}\quad\mbox{and}\quad
    \|\nabla w \|_{L^2\II} \le c|g(z)|^{-1/2}\| v \|_{L^2\II}.
\end{equation}
In view of \eqref{eqn:wbound2}, a bound on $\| w \|_{\dH 2}$ can be derived
\begin{equation*}
\begin{split}
   \| w \|_{\dot H^{2}\II} & =  \| A w \|_{L^2\II}
   = c\| (-g(z)I+g(z)I+A)(g(z)I+A)^{-1}v \|_{L^2\II}\\
   &\le c\left(\| v \|_{L^2\II}+|g(z)|\|w\|_{L^2\II}\right)\le c\| v\|_{L^2\II}.
\end{split}
\end{equation*}
It follows from this and \eqref{eqn:control2} that
\begin{equation*}
    |g(z)| \| e\|_{L^2\II}^2 + \| \nabla e\|_{L^2\II}^2
     \le ch \| v \|_{L^2\II}\left(|g(z)|^{1/2}\| e\|_{L^2\II}
     + \|\nabla e\|_{L^2\II } \right),
\end{equation*}
and this yields
\begin{equation}\label{eqn:control3}
     |g(z)| \|e\|_{L^2\II}^2 + \|\nabla e\|_{L^2\II}^2\le ch^{2}\| v \|_{L^2\II}^2.
\end{equation}
This gives the desired bound on $\|\nabla e\|_{L^2\II}$. Next, we derive the
estimate on $\|e\|_{L^2\II}$ by a duality argument. For $\fy\in L^2\II$, by setting
$$\psi=(g(z)I+A)^{-1}\fy\quad \text{and}\quad \psi_h=(g(z)I+A_h)^{-1}P_h\fy$$
we have by duality
\begin{equation*}
\|e \|_{L^2\II} \le \sup_{\fy \in L^2\II}\frac{|(e,\fy)|}{\|\fy\|_{L^2\II}}
=\sup_{\fy \in L^2\II}\frac{|g(z)(e,\psi)+(\nabla e,\nabla \psi)|}{\|\fy\|_{L^2\II}}.
\end{equation*}
Then the desired estimate follows from \eqref{eqn:worthog} and \eqref{eqn:control3} by
\begin{equation*}
    \begin{split}
      |g(z)(e,\psi)+(\nabla e,\nabla \psi)|
        & = |g(z)(e,\psi-\psi_h)+(\nabla e,\nabla (\psi-\psi_h))|\\
        & \le |g(z)|^{1/2}\|e\|_{L^2\II} |g(z)|^{1/2}\| \psi-\psi_h \|_{L^2\II}\\
        &\ \ \ \ + \|\nabla e\|_{L^2\II}\| \nabla(\psi-\psi_h) \|_{L^2\II}\\
        & \le ch^{2} \| v \|_{L^2\II}\| \fy \|_{L^2\II}.
    \end{split}
\end{equation*}
This completes proof of the lemma.
\end{proof}

\subsection{Error estimates for the semidiscrete scheme}
Now we can state the error estimate for the nonsmooth initial data $v\in L^2\II$.
\begin{theorem}\label{thm:nonsmooth-initial-op}
Let $u$ and $u_h$ be the solutions of problem \eqref{eqn:rsp} and \eqref{eqn:femop} with $v \in L^2\II$
and $v_h=P_h v$, respectively. Then for $t>0$, there holds:
\begin{equation*}
  \| u(t)-u_h(t) \|_{L^2\II} + h\| \nabla (u(t)-u_h(t)) \|_{L^2\II}
  \le ch^{2} t^{-(1-\al)} \|v\|_{L^2\II}.
\end{equation*}
\end{theorem}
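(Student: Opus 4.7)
The plan is to pass to the resolvent representation and combine the uniform resolvent error estimate from Lemma \ref{lem:wbound} with a contour integral bound using the kernel estimate of Lemma \ref{lem:est-g}. Since $u(t)=S(t)v$ and $u_h(t)=S_h(t)P_hv$, subtracting the two Bromwich integrals \eqref{eqn:solop} and \eqref{eqn:disc-solop} gives
\begin{equation*}
u(t)-u_h(t)=\frac{1}{2\pi\mathrm{i}}\int_{\Gamma} e^{zt}\,\frac{g(z)}{z}\Bigl[(g(z)I+A)^{-1}v-(g(z)I+A_h)^{-1}P_hv\Bigr]dz,
\end{equation*}
where $\Gamma=\Gamma_{1/t,\pi-\theta}$ is the contour from \eqref{eqn:Gamma}. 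For each fixed $z\in\Sigma_{\pi-\theta}$, Lemma \ref{lem:wbound} bounds the bracketed expression in $L^2(\Omega)$ by $ch^2\|v\|_{L^2(\Omega)}$ and in the $H^1_0$-seminorm by $ch\|v\|_{L^2(\Omega)}$, independently of $z$. So the two estimates in the theorem reduce to the same scalar integral, differing only by a factor of $h$.

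Next I would estimate the integral $I(t):=\int_{\Gamma}e^{\Re(z)t}\,|g(z)|/|z|\,|dz|$. By Lemma \ref{lem:est-g}, $|g(z)|/|z|\le c\min(1,|z|^{-\alpha})$ on $\Sigma_{\pi-\theta}$. Splitting $\Gamma$ into the two rays $z=re^{\pm\mathrm{i}(\pi-\theta)}$, $r\ge 1/t$, and the circular arc $z=t^{-1}e^{\mathrm{i}\psi}$, $|\psi|\le\pi-\theta$, I would treat the cases $t\le 1$ and $t>1$ separately. For $t\le 1$ one has $|z|\ge 1/t\ge 1$ everywhere on the rays, so
\begin{equation*}
\int_{1/t}^{\infty}e^{-rt\cos\theta}r^{-\alpha}\,dr\le c\,t^{\alpha-1},
\end{equation*}
after substituting $s=rt\cos\theta$, while the arc contributes $t^{-1}\cdot t^{\alpha}=t^{\alpha-1}$. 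For $t>1$ one splits the ray at $r=1$ (using the bound $1$ for $r\le 1$ and $r^{-\alpha}$ for $r\ge 1$), and uses $1/t\le t^{-(1-\alpha)}$ to obtain the same scaling. In both regimes $I(t)\le c\,t^{-(1-\alpha)}$.

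Combining these ingredients yields
\begin{equation*}
\|u(t)-u_h(t)\|_{L^2(\Omega)}\le c\,I(t)\,h^2\|v\|_{L^2(\Omega)}\le c\,h^2 t^{-(1-\alpha)}\|v\|_{L^2(\Omega)},
\end{equation*}
and similarly the seminorm estimate with the factor $h$ instead of $h^2$, which gives the stated bound. The main obstacle I anticipate is ensuring that the contour estimate delivers exactly the exponent $-(1-\alpha)$ uniformly in $t\in(0,T]$; this forces a careful case split based on whether $1/t$ is above or below the crossover $|z|\sim 1$ where the two branches of $\min(1,|z|^{-\alpha})$ exchange dominance, as outlined above. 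The spatial part is essentially automatic from Lemma \ref{lem:wbound} because the estimate there is independent of $z\in\Sigma_{\pi-\theta}$, which is the crucial property that makes the operator trick of Fujita--Suzuki go through without an extra $|\ln h|$ factor.
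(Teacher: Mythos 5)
Your proposal is correct and follows essentially the same route as the paper: the error is written as the contour integral of $e^{zt}\frac{g(z)}{z}\bigl(w-w_h\bigr)$ over $\Gamma_{1/t,\pi-\theta}$, the $z$-uniform bound of Lemma \ref{lem:wbound} is applied, and the remaining scalar integral $\int_\Gamma e^{\Re(z)t}|g(z)|/|z|\,|dz|$ is estimated by $ct^{-(1-\alpha)}$ exactly as in the proof of Theorem \ref{thm:reg}. Your case split $t\le 1$ versus $t>1$ is harmless but unnecessary, since using $|g(z)|/|z|\le c|z|^{-\alpha}$ on the whole contour and substituting $s=rt$ already yields $ct^{\alpha-1}$ uniformly in $t>0$.
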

\begin{proof}
The error $e(t):=u(t)-u_h(t)$ can be represented as
\begin{equation*}
e(t)=\frac1{2\pi\mathrm{i}}\int_{\Gamma} e^{zt} \frac{g(z)}{z} (w-w_h) \,dz,
\end{equation*}
with $w=(g(z)I+A)^{-1}v$ and $w_h=(g(z)I+A_h)^{-1}P_h v$.
By Lemma \ref{lem:wbound} and the argument in the proof of Theorem \ref{thm:reg} we have
\begin{equation*}
\| \nabla e(t) \|_{L^2\II}\le ch \| v \|_{L^2\II} \int_{\Gamma} e^{\Re(z)t} \frac{|g(z)|}{|z|} \,|dz|
\le cht^{-(1-\al)} \| v \|_{L^2\II}.
\end{equation*}
A similar argument also yields the $L^2\II$-estimate.
\end{proof}

Next we turn to the case of smooth initial data, i.e., $v\in \dH2$ and $v_h\in R_hv$. We take again
contour $\Gamma=\Gamma_{1/t,\pi-\theta}$. Then the error $e(t)=u(t)-u_h(t)$ can be represented as
\begin{equation*}
e(t)=\frac1{2\pi\mathrm{i}}\int_{\Gamma} e^{zt} \frac{g(z)}{z} \left((g(z)I+A)^{-1}-(g(z)I+A_h)^{-1}R_h\right)v \,dz.
\end{equation*}
By the equality
\begin{equation*}
   \frac{g(z)}{z} (g(z)I+A)^{-1} = z^{-1}I-z^{-1}(g(z)I+A)^{-1}A,
\end{equation*}
we can obtain
\begin{equation}\label{eqn:smooth-er-rep}
  \begin{split}
    e(t)=\frac1{2\pi\mathrm{i}}\left(\int_{\Gamma} e^{zt} z^{-1} (w_h(z)-w(z)) \,dz
    + \int_{\Gamma} e^{zt} z^{-1} (v-R_hv) \,dz\right),
  \end{split}
\end{equation}
where $w(z)=(g(z)I+A)^{-1}A v$ and $w_h(z)=(g(z)I+A_h)^{-1}A_hR_hv$. Then we derive the following error estimate.

\begin{theorem}\label{thm:smooth-initial-op}
Let $u$ and $u_h$ be the solutions of problem \eqref{eqn:rsp} and \eqref{eqn:femop} with $v \in \dH2$
and $v_h=R_h v$, respectively. Then for $t>0$, there holds:
\begin{equation}\label{eqn:smooth-initial-op}
  \| u(t)-u_h(t) \|_{L^2\II} + h\| \nabla (u(t)-u_h(t)) \|_{L^2\II}
  \le c h^{2} \|v\|_{\dH2}.
\end{equation}
\end{theorem}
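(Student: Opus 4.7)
The plan is to bound the two integrals appearing in the error representation~\eqref{eqn:smooth-er-rep} separately and show that each contributes $O(h^2)$ to the $L^2\II$-error and $O(h)$ to the $H^1\II$-error, uniformly in $t$. The second integral is essentially free: since $v-R_hv$ is independent of $z$, it pulls outside, and $\frac{1}{2\pi\mathrm{i}}\int_\Gamma e^{zt}z^{-1}\,dz=1$ for $t>0$ by Cauchy's theorem (the contour $\Gamma=\Gamma_{1/t,\pi-\theta}$ encircles the origin counterclockwise), so this piece equals $v-R_hv$ and Lemma~\ref{lem:prh-bound} supplies the required estimate.

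For the first integral, the key observation is the identity $A_hR_hv=P_h(Av)$ for $v\in\dH 2$, which follows immediately from the definitions of $\Delta_h$ and $R_h$: for any $\chi\in X_h$,
\begin{equation*}
  (A_hR_hv,\chi)=(\nabla R_hv,\nabla\chi)=(\nabla v,\nabla\chi)=(Av,\chi).
\end{equation*}
Hence $w_h(z)-w(z)=(g(z)I+A_h)^{-1}P_h(Av)-(g(z)I+A)^{-1}(Av)$ is precisely the quantity controlled by Lemma~\ref{lem:wbound} applied to the datum $Av\in L^2\II$, yielding the $z$-uniform pointwise bound
\begin{equation*}
  \|w_h(z)-w(z)\|_{L^2\II}+h\|\nabla(w_h(z)-w(z))\|_{L^2\II}\le ch^2\|v\|_{\dH 2},\qquad z\in\Sigma_{\pi-\theta}.
\end{equation*}

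Finally, I would push this pointwise bound through the contour integral. Because the integrand now carries the factor $z^{-1}$ rather than the singular $g(z)/z\sim|z|^{-\alpha}$ of the nonsmooth case, the scalar integral $\int_\Gamma|e^{zt}|\,|z|^{-1}\,|dz|$ is uniformly bounded in $t$: on the arc one has $|z|^{-1}|dz|=d\psi$ and $|e^{zt}|\le e$, giving an $O(1)$ contribution, while on the rays the substitution $s=rt\cos\theta$ reduces the tail to $\int_{\cos\theta}^\infty e^{-s}s^{-1}\,ds<\infty$ independently of $t$. Combining the uniform scalar bound with the pointwise estimates from Lemma~\ref{lem:wbound} and Lemma~\ref{lem:prh-bound} delivers~\eqref{eqn:smooth-initial-op}.

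The main conceptual step is the identity $A_hR_hv=P_h(Av)$, which is exactly what allows reuse of Lemma~\ref{lem:wbound} with datum $Av$ in place of $v$; once this is recognized the proof is routine. Analytically, the disappearance of the factor $g(z)/z$ (and its replacement by the benign $z^{-1}$) that comes from the algebraic manipulation in~\eqref{eqn:smooth-er-rep} is what trades the $t^{-(1-\alpha)}$ blow-up of Theorem~\ref{thm:nonsmooth-initial-op} for a $t$-uniform constant in the present smooth-data bound.
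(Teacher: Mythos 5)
Your proposal is correct and follows essentially the same route as the paper: starting from the representation \eqref{eqn:smooth-er-rep}, using the identity $A_hR_h v=P_h(Av)$ to invoke Lemma \ref{lem:wbound} with datum $Av$, handling the $v-R_hv$ term via Lemma \ref{lem:prh-bound}, and bounding $\int_\Gamma e^{\Re(z)t}|z|^{-1}\,|dz|$ uniformly in $t$. The only cosmetic difference is that you evaluate $\frac{1}{2\pi\mathrm{i}}\int_\Gamma e^{zt}z^{-1}\,dz=1$ explicitly, which the paper does not bother to do.
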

\begin{proof}
Let $w(z)=(g(z)I+A)^{-1}Av$ and $w_h(z)=(g(z)I+A_h)^{-1}A_hR_hv$.
Then Lemmas \ref{lem:prh-bound} and \ref{lem:wbound}, and the identity $A_hR_h=P_h A$ give
\begin{equation*}
  \| w(z)-w_h(z) \|_{L^2\II} + h\| \nabla (w(z)-w_h(z)) \|_{L^2\II}
  \le c h^{2} \| Av \|_{L^2\II}.
\end{equation*}
Now it follows from this and the representation \eqref{eqn:smooth-er-rep} that
\begin{equation*}
\begin{split}
  \|e(t)\| &\le c h^2 \| Av \|_{L^2\II}\int_{\Gamma} e^{\Re(z)t}| z|^{-1} \,|dz|\\
      &\le  c h^2 \| Av \|_{L^2\II}\left(\int_{1/t}^\infty e^{-rt\cos\theta} r^{-1} \,dr
      + \int_{-\pi+\theta}^{\pi-\theta} e^{\cos\psi} \,d\psi \right)\\
      &\le c h^2 \| Av \|_{L^2\II}=ch^2 \| v \|_{\dH2}.
\end{split}
\end{equation*}
Hence we obtain the $L^2\II$-error estimate. The $H^1\II$-error estimate follows analogously.
\end{proof}

\begin{remark}\label{rem:smoothPh}
For smooth initial data $v\in \dH 2$, we may also take the approximation $v_h=P_hv$. Then the error can be split into
\begin{equation*}
e(t)=S(t)v-S_h(t)P_hv=(S(t)v-S_h(t)R_hv)+(S_h(t)R_hv-S_h(t)P_hv).
\end{equation*}
Theorem \ref{thm:smooth-initial-op} gives an estimate of the first term. A bound for
the second term follows from Lemmas \ref{lem:prh-bound} and \ref{lem:discreg}
\begin{equation*}
\| S_h(t)(P_hv-R_hv) \|_{\dH p} \le c\| P_hv-R_hv \|_{\dH p} \le ch^{2-p}\|v\|_{\dH2}.
\end{equation*}
Thus the error estimate \eqref{eqn:smooth-initial-op} holds for the initial approximation $v_h=P_hv$.
It follows from this, Theorem \ref{thm:nonsmooth-initial-op},and interpolation that
for all $q\in [0,2]$ and $v_h=P_hv$, there holds
\begin{equation*}
  \| u(t)-u_h(t) \|_{L^2\II} + h\| \nabla (u(t)-u_h(t)) \|_{L^2\II}
   \le c h^{2} t^{-(1-\al)(2-q)/2} \|v\|_{\dot H^q \II}.
\end{equation*}
\end{remark}

\begin{remark}\label{rem:veryweak}
If the initial data is very weak, i.e., $v\in\dot H^q\II$ with $-1<q<0$, Then the argument of
\cite[Theorem 2]{JinLazarovPasciakZhou:2013} yields the following optimal error estimate for
the semidiscrete finite element approximation \eqref{eqn:fem}
\begin{equation}\label{eqn:weaksemi}
    \| u(t)-u_h(t) \|_{L^2\II} + h\| \nabla (u(t)-u_h(t)) \|_{L^2\II}
  \le c h^{2+q} t^{-(1-\al)} \|v\|_{\dH q}.
\end{equation}
\end{remark}

\section{Fully discrete schemes}\label{sec:fullydiscrete}
Now we develop two fully discrete schemes for problem \eqref{eqn:rsp} based on convolution quadrature
(see \cite{Lubich:1988,LubichSloanThomee:1996,Lubich:2004,CuestaLubichPlencia:2006} for detailed discussions),
and derive optimal error estimates for both smooth and nonsmooth initial data.

\subsection{Convolution quadrature}
First we briefly describe the abstract framework in \cite[Sections 2 and 3]{CuestaLubichPlencia:2006},
which is instrumental in the development and analysis of fully discrete schemes.
Let $K$ be a complex valued or operator valued function that is analytic in a sector $\Sigma_{\pi-\theta}$,
$\theta\in(0,\pi/2)$ and is bounded by
\begin{equation}\label{eqn:Ksect}
  \|K(z)\|\leq M|z|^{-\mu}\quad \forall z\in\Sigma_{\pi-\theta},
\end{equation}
for some real numbers $\mu$ and $M$. Then $K(z)$ is the Laplace transform of a distribution $k$ on
the real line, which vanishes for $t<0$, has its singular support empty or concentrated at $t=0$,
and which is an analytic function for $t>0$. For $t>0$, the analytic function $k(t)$ is given by the inversion formula
\begin{equation*}
  k(t) = \frac{1}{2\pi\mathrm{i}}\int_\Gamma K(z)e^{zt}dz, \ \ t>0,
\end{equation*}
where $\Gamma$ is a contour lying in the sector of analyticity, parallel to its boundary and oriented with
increasing imaginary part. With $\partial_t$ being time differentiation, we define $K(\partial_t)$ as the
operator of (distributional) convolution with the kernel $k:K(\partial_t)g=k\ast g$ for a function $g(t)$
with suitable smoothness.

A convolution quadrature approximates $K(\partial_t)g(t)$ by a discrete
convolution $K(\bPtau) g(t)$. Specifically, we divide the time interval $[0,T]$ into
$N$ equal subintervals with a time step size $\tau=T/N$, and define the approximation:
\begin{equation*}
  K(\bPtau) g(t) = \sum_{0\leq j\tau\leq t}\omega_jg(t-j\tau), \ \ t>0,
\end{equation*}
where the quadrature weights $\{\omega_j\}_{j=0}^\infty$ are determined by the generating function
\begin{equation*}
  \sum_{j=0}^\infty \omega_j\xi^j = K(\delta(\xi)/\tau).
\end{equation*}
Here $\delta$ is the quotient of the generating polynomials of
a stable and consistent linear multistep method. In this work, we consider
the backward Euler (BE) method and second-order backward difference (SBD) method, for which
\begin{equation*}
  \delta(\xi) = \left\{\begin{aligned}
    (1-\xi), \qquad &\ \  \mbox{BE},\\
    (1-\xi) + (1-\xi)^2/2, &\ \ \mbox{SBD}.
  \end{aligned}\right.
\end{equation*}

Now we specialize the construction to the semidiscrete problem \eqref{eqn:femop}.
By integrating \eqref{eqn:femop} from $0$ to $t$, we arrive at
a representation of the semidiscrete solution $u_h$
\begin{equation*}
  u_h + (\g\partial_t^{\al-1}+\partial_t^{-1})A_h u_h =v_h + \partial_t^{-1}f_h.
\end{equation*}
where $\partial_t^{\beta} u$, $\beta<0$, denotes the Riemann-Liouville integral $\partial_t^\beta u=
\frac{1}{\Gamma(-\beta)}\int_0^t(t-s)^{-\beta-1}u(s)ds$. The left-hand side is a convolution, which
we approximate at $t_n=n\tau$ with $U_h^n$ by
\begin{equation*}
  U_h^n + (\g\bPtau^{\alpha-1} + \bPtau^{-1})A_hU_h^n = v_h + \bPtau^{-1}f_h,
\end{equation*}
where the symbols $\bPtau^{\alpha-1}$ and $\bPtau^{-1}$ refer to relevant convolution quadrature generated by 
the respective linear multistep method. For the convenience of numerical implementation, we rewrite them in a
time stepping form.

\paragraph{\bf 4.1.1 The backward Euler (BE) method}
 The BE method is given by: Find $U_h^n$ for $n=1,2,\ldots,N$ such that
\begin{equation}\label{eqn:BE-v1}
  U_h^n + (\g\bPtau^{\alpha-1} + \bPtau^{-1})A_hU_h^n = v_h + \bPtau^{-1} f_h(t_n)
\end{equation}
with the convolution quadratures $\bPtau^{\alpha-1}$ and $\bPtau^{-1}$ generated by
the BE method. By applying $\bPtau$ to the scheme
\eqref{eqn:BE-v1} and the associativity of convolution, we deduce that it can be
rewritten as: with $U_h^0=v_h\in X_h$ and $F_h^n=f_h(t_n)$, find $U_h^n$ for $n=1,2,...,N$ such that
\begin{equation}\label{eqn:BE}
    \tau^{-1}\left(U_h^n-U_h^{n-1}\right)+ \gamma \bPtau^\alpha(A_hU_h^n) + A_h U_h^n = F_h^n.
\end{equation}
\begin{remark}
In the scheme \eqref{eqn:BE}, the term at $n=0$ in $\bPtau A_hU_h^n$ can be omitted without affecting its convergence rate
\cite{Sanz-Serna:1988,LubichSloanThomee:1996}.
\end{remark}

\paragraph{\bf 4.1.2 The second-order backward difference (SBD) method}
Now we turn to the SBD scheme. It is known that it is only first-order accurate if $g(0)\neq0$, e.g.,
for $g\equiv1$ \cite[Theorem 5.1]{Lubich:1988} \cite[Section 3]{CuestaLubichPlencia:2006}. The
first-order convergence is numerically also observed on problem \eqref{eqn:rsp}. Hence, one needs to
correct the scheme, and we follow the approach proposed in \cite{LubichSloanThomee:1996,
CuestaLubichPlencia:2006}. Using the identity
\begin{equation*}
  (I+(\partial^{\alpha-1}_t +\partial^{-1}_t)A_h)^{-1}=I-(I+(\partial^{\alpha-1}_t+\partial^{-1}_t)
  A_h)^{-1}(\partial^{\alpha-1}_t+\partial^{-1}_t)A_h,
\end{equation*}
we can rewrite the semidiscrete solution $u_h$ into
\begin{equation*}
  u_h=v_h + (I+(\g\partial_t^{\al-1}+\partial_t^{-1})A_h)^{-1}(-(\g\partial_t^{\al-1} + \partial_t^{-1})A_hv_h + \partial^{-1}_tf_{h,0} + \partial_t^{-1}\tilde{f}_h),
\end{equation*}
where $f_{h,0} = f_h(0)$ and $\tilde{f}_h=f_h-f_h(0)$. This leads to the convolution quadrature
\begin{equation}\label{eqn:SBD-v1}
  \begin{aligned}
    U_h^n & = v_h+(I+(\g\bPtau^{\al-1}+\bPtau^{-1})A_h)^{-1}
     (-(\g\bPtau^{\al}\partial_t^{-1} + \partial_t^{-1})A_hv_h\\
      & \qquad +\partial_t^{-1}f_{h,0}(t_n) + \bPtau^{-1} \tilde{f}_h(t_n)).
  \end{aligned}
\end{equation}
The purpose of keeping the operator $\partial_t^{-1}$ intact in \eqref{eqn:SBD-v1} is to achieve
a second-order accuracy, cf. Lemma \ref{lem:boundSBD} below. Letting $1_\tau=(0,3/2,1,\ldots)$, and noting
the identity $1_\tau=\bPtau\partial^{-1} 1$ at grid points $t_n$, and associativity of
convolution, \eqref{eqn:SBD-v1} can be rewritten as
\begin{equation*}
  (I+(\gamma\bPtau^{\alpha-1} +\bPtau^{-1})A_h)(U_h^n - v_h) =
  -(\gamma\bPtau^{\alpha-1}+\bPtau^{-1})A_h1_\tau v_h + \bPtau^{-1} 1_\tau f_{h,0}(t_n) + \bPtau^{-1} \tilde{f}_h(t_n) .
\end{equation*}
Next by applying the operator $\bPtau$, we obtain
\begin{equation}\label{eqn:SBD}
  \bPtau(U_h^n-v_h) + (\gamma\bPtau^{\alpha} + I)A_h(U_h^n-v_h) = - (\gamma\bPtau^{\alpha}+I)A_h1_\tau v_h + 1_\tau f_{h,0}(t_n) + \tilde{f}_h(t_n).
\end{equation}
Thus we arrive at  a time stepping scheme: with $U_h^0=v_h$, find $U_h^n$ such that
\begin{equation*}
  \tau^{-1}{\left(3U_h^1/2-3U_h^0/2\right)} + \gamma \tilde{\partial}_\tau^\alpha A_hU_h^1 + A_hU_h^1+ A_hU_h^0/2 = F_h^1 + F_h^0/2,
\end{equation*}
and for $n\geq 2$
\begin{equation*}
    \bPtau U_h^n +\gamma \tilde{\partial}_\tau^\alpha A_hU_h^n + A_hU_h^n = F_h^n,
\end{equation*}
where the convolution quadrature $\tilde{\partial}_\tau^\alpha \varphi^n $ is given by
$$\tilde{\partial}_\tau^\alpha \varphi^n = \tau^{-\al}(\sum_{j=1}^n\omega^\alpha_{n-j}\varphi^j
+ \omega^\alpha_{n-1}\varphi^0/2),$$
with the weights $\{\omega_j^\alpha\}$ generated by the SBD method.

The error analysis of the fully discrete schemes \eqref{eqn:BE} and \eqref{eqn:SBD} for the case $f\equiv0$
will be carried out below, following the general strategy in \cite[Section 4]{CuestaLubichPlencia:2006}.
\subsection{Error analysis of the backward Euler method}
Upon recalling the function $g(z)$ from \eqref{eqn:HH} and denoting
\begin{equation}\label{eqn:G1}
 G(z)=(I+g(z)^{-1}A_h)^{-1},
\end{equation}
we can write the difference between $u_h(t_n)$ and $U_h^n$ as
\begin{equation}\label{eqn:diff1}
 U_h^n-u_h(t_n)= (G(\bPtau)-G(\partial_t))v_h.
\end{equation}

For the error analysis, we need the following estimate \cite[Theorem 5.2]{Lubich:1988}.
\begin{lemma}\label{lem:boundBE}
Let $K(z)$ be analytic in $\Sigma_{\pi-\theta}$ and \eqref{eqn:Ksect} hold.
Then for $g(t)=ct^{\beta-1}$, the convolution quadrature based on the
BE satisfies
\begin{equation*}
    \| (K(\partial_t)-K(\bPtau))g(t)  \| \le
    \left\{ \begin{array}{ll}
     ct^{\mu-1}\tau^\beta, &~~0<\beta\le 1, \\
     ct^{\mu+\beta-2}\tau, &~~\beta\ge1.\end{array}\right.
\end{equation*}
\end{lemma}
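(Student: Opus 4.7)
The plan is to follow the operational calculus approach of Lubich: realise both the continuous and discrete convolutions as contour integrals built from the Laplace transform of $g$, subtract, and estimate the resulting kernel difference on a Hankel-type contour.

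First I would record the Laplace transform $\widehat{g}(z) = c\,\Gamma(\beta)\,z^{-\beta}$ and write the exact convolution as
\begin{equation*}
K(\partial_t)g(t_n)=\frac{1}{2\pi \mathrm{i}}\int_{\Gamma} e^{zt_n}K(z)\widehat{g}(z)\,dz ,
\end{equation*}
where $\Gamma=\Gamma_{1/t_n,\pi-\theta}$ sits inside the sector of analyticity of $K$. For the BE convolution quadrature, whose weights are generated by $K(\delta(\xi)/\tau)$ with $\delta(\xi)=1-\xi$, I would use the standard identity that expresses $K(\bPtau)g(t_n)$ as the analogous integral obtained from the substitution $\xi=e^{-z\tau}$, namely an integral of $K\bigl(\delta(e^{-z\tau})/\tau\bigr)\widehat{g}(z)\,e^{zt_n}$ over the same contour, modulo boundary corrections that are harmless since $\Gamma$ avoids the zeros of $\delta(e^{-z\tau})$. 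Subtracting, the whole error becomes
\begin{equation*}
K(\partial_t)g(t_n)-K(\bPtau)g(t_n)=\frac{1}{2\pi \mathrm{i}}\int_{\Gamma} \bigl[K(z)-K(\delta(e^{-z\tau})/\tau)\bigr]\,e^{zt_n}\widehat{g}(z)\,dz .
\end{equation*}

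The second step is the kernel estimate. Because $\delta$ is consistent of order one, $\delta(e^{-z\tau})/\tau=z+O(\tau z^{2})$, which combined with analyticity and the sectorial bound \eqref{eqn:Ksect} for $K$ gives, on the Hankel contour,
\begin{equation*}
\bigl|K(z)-K(\delta(e^{-z\tau})/\tau)\bigr|\le C\min\bigl(|z|^{-\mu},\,\tau|z|^{1-\mu}\bigr),
\end{equation*}
the first bound being just the triangle inequality and the second coming from a mean-value step restricted to the region $|z|\tau\le c$ where $\delta(e^{-z\tau})/\tau$ stays inside the sector; outside that region the two terms are bounded separately.

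Third, I would plug this into the integral along $\Gamma$, use $|\widehat g(z)|\le c|z|^{-\beta}$ and split the contour at the scale $|z|\sim 1/\tau$. On the inner part $|z|\le 1/\tau$ I keep the $\tau|z|^{1-\mu}$ bound, and on the outer part I use $|z|^{-\mu}$; on both pieces $e^{\Re z\,t_n}$ is controlled by the usual sectorial exponential decay away from the tip $|z|=1/t_n$. A direct computation then yields
\begin{equation*}
\bigl\|(K(\partial_t)-K(\bPtau))g(t_n)\bigr\|\le c\int_{1/t_n}^{\infty} e^{-r t_n \cos\theta}\min(\tau r^{1-\mu-\beta},r^{-\mu-\beta})\,dr + \text{arc term},
\end{equation*}
whose evaluation produces the two regimes: if $0<\beta\le 1$ the integrand is saturated at $r\sim 1/\tau$ and the integral is of order $\tau^{\beta}t_n^{\mu-1}$, while for $\beta\ge 1$ the bound $\tau r^{1-\mu-\beta}$ dominates over the whole contour and integration gives $\tau\,t_n^{\mu+\beta-2}$.

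The main obstacle is the kernel estimate for $K(z)-K(\delta(e^{-z\tau})/\tau)$ uniformly on $\Gamma$: one must verify that $\delta(e^{-z\tau})/\tau$ lies in the sector of analyticity of $K$ for all $z$ on $\Gamma$ (so that $K$ may be applied to it), and that it approximates $z$ to first order with constants independent of how close $|z|$ is to $1/\tau$. Once this sectorial stability of BE is in hand, the rest is bookkeeping with the Hankel integral.
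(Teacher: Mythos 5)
The paper itself gives no proof of this lemma: it is quoted directly from Lubich's Theorem 5.2 (the 1988 convolution-quadrature paper), so your sketch has to be measured against the standard operational-calculus proof in that literature (Lubich 1988, Lubich--Sloan--Thom\'ee, Cuesta--Lubich--Palencia). You have identified the right overall strategy, and the final bookkeeping is sound: granting your error representation and the kernel bound $\min(|z|^{-\mu},\tau|z|^{1-\mu})$ on the contour, the split at $|z|\sim 1/\tau$ together with the exponential decay along the rays does produce the two stated regimes.

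The genuine gap is the representation step, which you state ``over the same contour, modulo boundary corrections that are harmless'' and then yourself flag as the main obstacle without resolving it; as literally stated it is false, and it is exactly where the theorem lives. The BE symbol $\delta(e^{-z\tau})/\tau=(1-e^{-z\tau})/\tau$ is $2\pi\mathrm{i}/\tau$-periodic: it vanishes at $z=2\pi\mathrm{i}k/\tau$ and leaves every fixed sector once $|\Im z|>\pi/\tau$, so on the unbounded contour $\Gamma$ neither your identity for $K(\bar\partial_\tau)g(t_n)$ nor the kernel estimate holds (even the ``triangle inequality'' half needs $|\delta(e^{-z\tau})/\tau|\ge c|z|$, which fails outside the strip); also, the formal term-by-term substitution leaves a tail $\sum_{j>n}\omega_j e^{-zj\tau}$ that diverges where $\Re z<0$. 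The correct route is to represent the discrete convolution through the generating-function identity and Cauchy's formula and substitute $\xi=e^{-z\tau}$, which confines the discrete integral to the strip $|\Im z|\le\pi/\tau$; the part of the continuous integral outside the strip is then estimated separately. Moreover, even inside the strip the exact discrete representation carries the discrete transform $\tau\sum_{j\ge 0}g(t_j)e^{-zt_j}$ rather than $\widehat g(z)$, so there is a second error source -- replacing this sum by $c\Gamma(\beta)z^{-\beta}$ -- which your decomposition omits entirely. That term (equivalently, the nonsmoothness of $t^{\beta-1}$ at $t=0$; note that for $0<\beta<1$ the quadrature even samples $g(0)$, which needs interpretation) is precisely what limits the rate to $\tau^{\beta}$ instead of $\tau$ when $0<\beta<1$; your kernel-difference term alone would yield $O(\tau t^{\mu+\beta-2})$ in both regimes, and the $\tau^{\beta}t^{\mu-1}$ bound you recover for $\beta\le 1$ is only the crude consequence $\tau t^{\mu+\beta-2}\le \tau^{\beta}t^{\mu-1}$, not the actual mechanism. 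An alternative that avoids estimating the discrete transform is the route of Cuesta--Lubich--Palencia: write $g$ as a contour integral of exponentials $e^{\lambda t}$ and use the quadrature error for $K(\bar\partial_\tau)e^{\lambda\cdot}$; either way, the step you deferred as ``sectorial stability of BE on all of $\Gamma$'' cannot be verified as such and must be replaced by the strip truncation.
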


Now we can state the error estimate for nonsmooth initial data $v\in L^2(\Omega)$.
\begin{lemma}\label{lem:timenonsmooth}
Let $u_h$ and $U_h^n$ be the solutions of problem \eqref{eqn:femop} and \eqref{eqn:BE} with
$v\in L^2(\Omega)$, $U_h^0= v_h = P_hv$ and $f\equiv0$, respectively. Then there holds
\begin{equation*}
   \| u_h(t_n)-U_h^n \|_{L^2(\Omega)} \le c \tau t_n^{-1}  \| v\|_{L^2\II}.
\end{equation*}
\end{lemma}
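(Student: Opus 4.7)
The plan is to use the error representation \eqref{eqn:diff1} and reduce the bound to a direct application of Lemma \ref{lem:boundBE} (the convolution-quadrature error estimate for the backward Euler method). Writing
\[
U_h^n - u_h(t_n) = \bigl(G(\bPtau) - G(\partial_t)\bigr) v_h,
\qquad G(z)=g(z)(g(z)I+A_h)^{-1},
\]
and viewing the time-independent datum $v_h$ as the power-type function $g(t)=v_h\cdot t^{\beta-1}$ with $\beta=1$, the first branch of Lemma \ref{lem:boundBE} will give the desired bound $c\tau t_n^{-1}\|v_h\|_{L^2(\Omega)}$, provided the sectorial hypothesis \eqref{eqn:Ksect} on the symbol $G(z)$ is verified with exponent $\mu=0$.

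The key step is therefore to show that $G(z)$ is analytic in $\Sigma_{\pi-\theta}$ and uniformly bounded there. By Lemma \ref{lem:est-g}, $g(z)$ is analytic on $\Sigma_{\pi-\theta}$ and takes values in $\Sigma_{\pi-\theta}$, so $g(z)$ always lies in the resolvent set of the self-adjoint positive-definite operator $-A_h$. The analogue of \eqref{eqn:resolvent} for the discrete Laplacian (proved exactly as in the continuous case, since $A_h$ is sectorial with the same angle) together with Lemma \ref{lem:est-g} gives $\|(g(z)I+A_h)^{-1}\|\le M/|g(z)|$, and hence
\[
\|G(z)\|_{L^2(\Omega)\to L^2(\Omega)} \le M \quad \forall z\in\Sigma_{\pi-\theta}.
\]
This is precisely \eqref{eqn:Ksect} with $\mu=0$, and analyticity is inherited from that of $g(z)$ and the resolvent.

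With $\mu=0$ and $\beta=1$, Lemma \ref{lem:boundBE} yields
\[
\bigl\|\bigl(G(\partial_t) - G(\bPtau)\bigr) v_h\bigr\|_{L^2(\Omega)} \le c\, t_n^{-1}\tau\, \|v_h\|_{L^2(\Omega)}.
\]
Combining this with the $L^2(\Omega)$-stability of $P_h$ from Lemma \ref{lem:prh-bound}, which gives $\|v_h\|_{L^2(\Omega)}=\|P_hv\|_{L^2(\Omega)}\le \|v\|_{L^2(\Omega)}$, finishes the proof.

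I expect the main obstacle to be a careful verification that the symbol $G(z)$ really satisfies the sectorial bound \eqref{eqn:Ksect} on the full sector of analyticity (and not just on a chosen integration contour), since this is the hypothesis required to legitimately invoke Lemma \ref{lem:boundBE}. Once the uniform estimate $\|G(z)\|\le M$ on $\Sigma_{\pi-\theta}$ is secured via Lemma \ref{lem:est-g} and the discrete resolvent bound, the remaining steps are essentially routine, and the same skeleton will drive the higher-regularity and SBD variants treated in the subsequent lemmas.
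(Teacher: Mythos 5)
Your proposal is correct and follows essentially the same route as the paper: the representation \eqref{eqn:diff1}, the uniform bound $\|G(z)\|\le c$ on $\Sigma_{\pi-\theta}$ obtained from Lemma \ref{lem:est-g} together with the (discrete) resolvent estimate \eqref{eqn:resolvent}, the application of Lemma \ref{lem:boundBE} with $\mu=0$, $\beta=1$, and finally the $L^2(\Omega)$-stability of $P_h$. Your extra care in noting that the sectorial bound must hold on the whole sector of analyticity, and that $A_h$ inherits the resolvent estimate of $A$, only makes explicit what the paper leaves implicit.
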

\begin{proof}
By \eqref{eqn:resolvent} and the identity $G(z) = g(z)(g(z)I+A_h)^{-1}$ for
$z\in \Sigma_{\pi-\theta}$, there holds
\begin{equation*}
    \| G(z)\| \le c \quad \forall z\in \Sigma_{\pi-\theta}.
\end{equation*}
Then \eqref{eqn:diff1} and Lemma \ref{lem:boundBE} (with $\mu=0$ and $\beta=1$) give
\begin{equation*}
  \| U_h^n-u_h(t_n) \|_{L^2(\Omega)} \le c \tau t_n^{-1} \| v_h \|_{L^2(\Omega)},
\end{equation*}
and the desired result follows directly from the $L^2(\Omega)$ stability of $P_h$.
\end{proof}

Next we turn to smooth initial data, i.e., $v\in \dH 2$.
\begin{lemma}\label{lem:timesmooth}
Let $u_h$ and $U_h^n$ be the solutions of problem \eqref{eqn:femop} and \eqref{eqn:BE} with
$v\in \dH 2$, $U_h^0= v_h=R_hv$ and $f\equiv0$, respectively. Then there holds
\begin{equation*}
   \| u_h(t_n)-U_h^n \|_{L^2(\Omega)} \le c \tau t_n^{-\al} \| Av \|_{L^2\II}.
\end{equation*}
\end{lemma}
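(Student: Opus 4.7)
The plan is to mimic the proof of Lemma \ref{lem:timenonsmooth} while shifting one factor of $A_h$ from the convolution symbol onto the initial datum, so as to exploit the smoothness $v\in\dH 2$. First I would rewrite both the semidiscrete solution and its fully discrete counterpart as
\begin{equation*}
u_h(t)=v_h-\big(K(\partial_t)A_h v_h\big)(t),\qquad U_h^n=v_h-\big(K(\bPtau)A_h v_h\big)(t_n),
\end{equation*}
where $K(z)=(g(z)I+A_h)^{-1}$ and $A_h v_h$ is viewed as a constant-in-time input. This identity follows by Laplace transform from $H_h(z)v_h=z^{-1}v_h-z^{-1}K(z)A_h v_h$, which is a direct consequence of the algebraic relation $I-(g(z)I+A_h)^{-1}A_h=g(z)(g(z)I+A_h)^{-1}$. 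Thus the error reduces to a pure convolution quadrature error,
\begin{equation*}
U_h^n-u_h(t_n)=-\big((K(\bPtau)-K(\partial_t))A_h v_h\big)(t_n).
\end{equation*}

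Next I would put $K$ into a form compatible with Lemma \ref{lem:boundBE}. Using the exact identity $g(z)^{-1}=z^{-1}+\gamma z^{\alpha-1}$ together with the uniform bound $\|(I+g(z)^{-1}A_h)^{-1}\|\le c$ on $\Sigma_{\pi-\theta}$ (this is exactly $\|G(z)\|\le c$, already used in Lemma \ref{lem:timenonsmooth}), I would split
\begin{equation*}
K(z)=K_1(z)+K_2(z),\quad K_1(z)=z^{-1}(I+g(z)^{-1}A_h)^{-1},\quad K_2(z)=\gamma z^{\alpha-1}(I+g(z)^{-1}A_h)^{-1},
\end{equation*}
so that $\|K_1(z)\|\le c|z|^{-1}$ and $\|K_2(z)\|\le c|z|^{\alpha-1}$ on the whole sector. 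Since the input $A_h v_h$ is constant in $t$, Lemma \ref{lem:boundBE} applies with $\beta=1$ and with $\mu=1$ for $K_1$ and $\mu=1-\alpha$ for $K_2$, yielding the bounds $c\tau\|A_h v_h\|_{L^2\II}$ and $c\tau t_n^{-\alpha}\|A_h v_h\|_{L^2\II}$, respectively. Absorbing $c\tau\le cT^\alpha\tau t_n^{-\alpha}$ for $t_n\in(0,T]$ and invoking $A_h R_h=P_h A$ together with the $L^2\II$-stability of $P_h$ to bound $\|A_h v_h\|_{L^2\II}\le\|Av\|_{L^2\II}$ then gives the desired estimate.

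The main obstacle lies in the splitting step: the naive resolvent estimate $\|K(z)\|\le c|g(z)|^{-1}$ is not of the single-power form $M|z|^{-\mu}$ required by Lemma \ref{lem:boundBE}, since $|g(z)|^{-1}$ behaves like $|z|^{-1}$ near the origin and like $|z|^{\alpha-1}$ at infinity. The key observation is that the algebraic identity $g(z)^{-1}=z^{-1}+\gamma z^{\alpha-1}$ is exact, which permits a clean two-term decomposition of $K(z)$ in which each summand does satisfy a uniform single-power bound on $\Sigma_{\pi-\theta}$, so that Lemma \ref{lem:boundBE} may be applied to each piece separately. The weaker contribution $c\tau$ coming from $K_1$ is harmless because $t_n$ is bounded by $T$.
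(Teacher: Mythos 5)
Your proof is correct and takes essentially the same route as the paper: the identical error representation $U_h^n-u_h(t_n)=-\big((K(\bPtau)-K(\partial_t))A_hv_h\big)(t_n)$ with $K(z)=(g(z)I+A_h)^{-1}$ (the paper writes $G_s(z)=-K(z)$), the same application of Lemma \ref{lem:boundBE} with $\beta=1$, and the same final step via $A_hR_h=P_hA$ and the $L^2(\Omega)$-stability of $P_h$. Your explicit splitting $K=K_1+K_2$ merely makes precise the paper's direct invocation of the bound $\|G_s(z)\|\le M(|z|^{-1}+\gamma|z|^{\alpha-1})$ with $\mu=1-\alpha$, the $|z|^{-1}$ contribution being absorbed on the bounded interval $(0,T]$ exactly as you indicate.
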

\begin{proof}
With the identity
\begin{equation*}
  A_h^{-1}(I+g(z)^{-1}A_h)^{-1}= A_h^{-1}-(g(z)I+A_h)^{-1},
\end{equation*}
and denoting $G_s(z)= -(g(z)I+A_h)^{-1}$, the error $U_h^n-u_h(t_n)$ can be represented by
\begin{equation*}
 U_h^n-u_h(t_n)= (G_s(\bPtau)-G_s(\partial_t))A_h v_h.
\end{equation*}
From \eqref{eqn:resolvent} and Lemma \ref{lem:est-g} we deduce
\begin{equation*}
    \| G_s(z)\| \le  M |g(z)|^{-1}=M\bigg|\frac{1+\gamma z^\alpha}{z}\bigg| \leq
  M(|z|^{-1}+\gamma |z|^{\al-1})\quad \forall z\in \Sigma_{\pi-\theta}.
\end{equation*}
Now Lemma \ref{lem:boundBE} (with $\mu=1-\al$ and $\beta=1$) gives
\begin{equation*}
  \|U_h^n-u_h(t_n) \|_{L^2(\Omega)} \le c \tau t_n^{-\al} \| A_h v_h \|_{L^2(\Omega)},
\end{equation*}
and the desired estimate follows directly from the identity $A_hR_h=P_hA$.
\end{proof}

\begin{remark}
By Lemma \ref{lem:timesmooth}, the error estimate exhibits a singular behavior of order $t^{-\alpha}$
as $t\rightarrow 0^+$, even for smooth initial data $v\in\dH2$. Nonetheless, as $\al \rightarrow 0^+$,
problem \eqref{eqn:rsp} reduces to the standard parabolic equation, and accordingly the singular behavior
disappears for smooth data, which coincides with the parabolic counterpart \cite{Thomee:2006}.
\end{remark}

Now we can state error estimates for the fully discrete scheme \eqref{eqn:BE}
with smooth and nonsmooth initial data, by the triangle inequality, Theorems
\ref{thm:nonsmooth-initial-op} and \ref{thm:smooth-initial-op},
Lemmas \ref{lem:timenonsmooth} and
\ref{lem:timesmooth}, respectively for the nonsmooth and smooth initial data.

\begin{theorem}\label{thm:error_fully_smooth}
Let $u$ and $U_h^n$ be the solutions of problem \eqref{eqn:rsp} and \eqref{eqn:BE} with
$U_h^0= v_h$ and $f\equiv0$, respectively. Then the following estimates hold.
\begin{itemize}
  \item[(a)] If $v\in \dH 2$ and $v_h=R_h v$, then
  \begin{equation*}
   \| u(t_n)-U_h^n \|_{L^2(\Omega)} \le c (\tau t_n^{-\al} + h^2)  \| v \|_{\dH 2}.
  \end{equation*}
  \item[(b)] If $v\in L^2(\Omega)$ and $v_h=P_hv$, then
  \begin{equation*}
   \| u(t_n)-U_h^n \|_{L^2(\Omega)} \le c (\tau t_n^{-1} + h^2 t_n^{\al-1})  \| v\|_{L^2\II}.
  \end{equation*}
\end{itemize}
\end{theorem}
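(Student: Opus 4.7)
The plan is to decompose the total error by the triangle inequality as
\begin{equation*}
  u(t_n) - U_h^n = \bigl(u(t_n) - u_h(t_n)\bigr) + \bigl(u_h(t_n) - U_h^n\bigr),
\end{equation*}
so that the first summand is controlled by a space semidiscretization estimate and the second by a time discretization estimate. All the technical work has already been done in Sections \ref{sec:semidiscrete} and \ref{sec:fullydiscrete}, so the argument reduces to pairing the correct lemmas with the correct initial data regularity; the only care needed is the matching of projections (the $L^2$ projection $P_h$ for nonsmooth data and the Ritz projection $R_h$ for smooth data) so that the space estimates and time estimates both hold under the same choice of $v_h$.

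For part (a), with $v\in\dH2$ and $v_h = R_h v$, I apply Theorem \ref{thm:smooth-initial-op} to bound the spatial part by $c h^2 \|v\|_{\dH 2}$, and Lemma \ref{lem:timesmooth} to bound the temporal part by $c\tau t_n^{-\al}\|Av\|_{L^2\II} = c\tau t_n^{-\al}\|v\|_{\dH 2}$. Summing yields the estimate
\begin{equation*}
  \| u(t_n)-U_h^n \|_{L^2(\Omega)} \le c (\tau t_n^{-\al} + h^2) \| v \|_{\dH 2}.
\end{equation*}

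For part (b), with $v\in L^2\II$ and $v_h = P_h v$, I apply Theorem \ref{thm:nonsmooth-initial-op} for the spatial part, giving $c h^2 t_n^{-(1-\al)}\|v\|_{L^2\II} = c h^2 t_n^{\al-1}\|v\|_{L^2\II}$, and Lemma \ref{lem:timenonsmooth} for the temporal part, giving $c\tau t_n^{-1}\|v\|_{L^2\II}$. Adding the two contributions yields
\begin{equation*}
  \| u(t_n)-U_h^n \|_{L^2(\Omega)} \le c(\tau t_n^{-1} + h^2 t_n^{\al-1})\|v\|_{L^2\II}.
\end{equation*}

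There is no genuine obstacle here, since the spatial semidiscrete error estimates in Theorems \ref{thm:nonsmooth-initial-op} and \ref{thm:smooth-initial-op} and the time discretization estimates in Lemmas \ref{lem:timenonsmooth} and \ref{lem:timesmooth} were designed with exactly this splitting in mind. The main conceptual point worth highlighting in the write-up is simply that the two projections used to define $v_h$ are compatible with both halves of the error: Remark \ref{rem:smoothPh} shows that if one preferred to take $v_h = P_h v$ even in the smooth case, the same final bound would still hold (up to adjusting Theorem \ref{thm:smooth-initial-op} accordingly), so the choices stated in the theorem are natural rather than forced. The proof is therefore essentially a two-line application of the triangle inequality.
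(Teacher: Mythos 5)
Your proposal is correct and coincides with the paper's own argument: the theorem is obtained precisely by the triangle inequality splitting $u(t_n)-U_h^n$ into the semidiscrete error, bounded by Theorems \ref{thm:nonsmooth-initial-op} and \ref{thm:smooth-initial-op}, and the time-discretization error, bounded by Lemmas \ref{lem:timenonsmooth} and \ref{lem:timesmooth}, with the same matching of $P_hv$ and $R_hv$ to the nonsmooth and smooth cases.
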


\begin{remark}\label{rem:int_fully1}
For $v\in \dH 2$, we can also choose $v_h=P_hv$. Let $\overline{U}^n_h$ be the corresponding
solution of the fully discrete scheme with $v_h=P_hv$. By the stability of the scheme,
a direct consequence of Lemma \ref{lem:timesmooth}, we have
\begin{equation*}
  \|U_h^n-\overline{U}^n_h\|_{L^2(\Omega)}\le c\| R_h v- P_h v \|_{L^2(\Omega)}\le ch^2\| v \|_{\dH 2}.
\end{equation*}
Thus the estimate in Theorem \ref{thm:error_fully_smooth}(a) still holds for $v_h=P_hv$.
Then by interpolation with the estimate for $v\in L^2(\Omega)$, we deduce
\begin{equation*}
   \| u(t_n)-U_h^n \|_{L^2(\Omega)} \le c (\tau t_n^{-1+(1-\al)q/2} +h^{2} t_n^{-(1-\al)(2-q)/2}) \| v \|_{\dot H^q \II},
   \quad 0 \le q \le 2.
\end{equation*}
\end{remark}
\begin{remark}\label{rem:weak1}
In case of very weak initial data, i.e., $v\in \dH q$ with $-1<q<0$,
by Lemma \ref{lem:timenonsmooth}, the inverse inequality \cite[pp. 140]{Ciarlet:2002}
and Lemma \ref{lem:prh-bound}
we have
\begin{equation*}
 \begin{aligned}
   \| u_h(t_n) - U_h^n  \|_{L^2\II} & \le c\tau t_n^{-1} \| P_h v \|_{L^2\II}
   \le c \tau h^{q} t_n^{-1} \| P_h v \|_{\dH q}
   \le c \tau h^q t_n^{-1} \|v \|_{\dH q}.
 \end{aligned}
\end{equation*}
This and Remark \ref{rem:veryweak} yield the following error estimate 
\begin{equation*}
\| u(t_n) - U_h^n  \|_{L^2\II} \le c (\tau h^q t_n^{-1} + h^{2+q}t_n^{\al-1}) \|v \|_{\dH q}.
\end{equation*}
\end{remark}

\subsection{Error analysis of the second-order backward difference method}
With $G(z)=-g(z)^{-1}z(I+g(z)^{-1}A_h)^{-1}A_h=-zA_h(g(z)I+A_h)^{-1}$, we have
\begin{equation}\label{eqn:Gsbd}
    u_h-U_h^n=(G(\partial_t)-G(\bPtau))\partial_t^{-1}v_h.
\end{equation}

Like Lemma \ref{lem:boundBE}, the following estimate holds (see \cite[Theorem 5.2]{Lubich:1988} \cite[Theorem 2.2]{Lubich:2004}).
\begin{lemma}\label{lem:boundSBD}
Let $K(z)$ be analytic in $\Sigma_{\pi-\theta}$ and \eqref{eqn:Ksect} hold. Then for
$g(t)=ct^{\beta-1}$, the convolution quadrature based on the
SBD satisfies
\begin{equation*}
    \| (K(\partial_t)-K(\bPtau)g(t)  \| \le
    \left\{ \begin{array}{ll}
     ct^{\mu-1}\tau^\beta, &~~0<\beta\le 2, \\
     ct^{\mu+\beta-3}\tau^2, &~~\beta\ge2.\end{array}\right.
\end{equation*}
\end{lemma}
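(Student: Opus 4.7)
The plan is to follow the general strategy of Lubich \cite{Lubich:1988,Lubich:2004} and represent both $K(\partial_t)g$ and $K(\bar\partial_\tau)g$ as contour integrals over a Hankel-type contour, then compare the two integrands using the second-order consistency of the SBD symbol. Since $g(t)=ct^{\beta-1}$ has Laplace transform $\widehat g(z)=c\Gamma(\beta)z^{-\beta}$, the continuous convolution admits the representation
\begin{equation*}
   K(\partial_t)g(t)=\frac{1}{2\pi\mathrm{i}}\int_{\Gamma} e^{zt}K(z)\widehat g(z)\,dz,
\end{equation*}
with $\Gamma=\Gamma_{1/t,\pi-\theta}$ lying in the sector of analyticity. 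For the discrete convolution, the generating function identity $\sum_{j\ge 0}\omega_j\xi^j=K(\delta(\xi)/\tau)$ together with a Cauchy-type representation of the coefficients of the product of generating functions yields
\begin{equation*}
  K(\bar\partial_\tau)g(t_n)=\frac{1}{2\pi\mathrm{i}}\int_{|\xi|=\rho} \xi^{-n-1}K(\delta(\xi)/\tau)\,\widehat{g}_\tau(\xi)\,d\xi,
\end{equation*}
where $\widehat{g}_\tau$ is the generating function of the sequence $\{g(t_j)\}$; after the substitution $\xi=e^{-z\tau}$ this becomes an integral along a horizontal segment that can be deformed onto the same Hankel contour $\Gamma$.

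The next step is to control the difference of the two integrands. For the SBD method the symbol satisfies the second-order consistency
\begin{equation*}
   \delta(e^{-z\tau})/\tau=z+O(z^{3}\tau^{2}),\qquad z\tau\to 0,
\end{equation*}
uniformly in a sector, while $\delta(e^{-z\tau})/\tau$ stays in $\Sigma_{\pi-\theta'}$ for some $\theta'\in(0,\pi/2)$ so that the sectorial bound \eqref{eqn:Ksect} is available for the discrete symbol as well. Combined with the analogous comparison $\widehat{g}_\tau(e^{-z\tau})-\widehat g(z)$ (where the relevant expansion order is dictated by $\beta$), this produces a pointwise bound of the form
\begin{equation*}
  \bigl|K(\delta(\xi)/\tau)\widehat g_\tau(\xi)-K(z)\widehat g(z)\bigr|\le c\,\tau^{\min(\beta,2)}|z|^{-\mu-\beta+\min(\beta,2)+(\text{correction})},
\end{equation*}
which is the mechanism producing the case split between $0<\beta\le 2$ and $\beta\ge 2$.

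The final step is to integrate the pointwise bound along $\Gamma=\Gamma_{1/t,\pi-\theta}$, splitting the contour into the two rays $|z|\ge 1/t$ (where $|e^{zt}|$ decays exponentially) and the circular arc $|z|=1/t$ (where $|e^{zt}|$ is $O(1)$), exactly as in the proof of Theorem \ref{thm:reg}. In the regime $0<\beta\le 2$ the dominant contribution comes from the arc, yielding $t^{\mu-1}\tau^\beta$, while for $\beta\ge 2$ the exponent is saturated at $\tau^2$ and the $t$-power becomes $t^{\mu+\beta-3}$ coming from the additional powers of $|z|$ in the integrand. The main obstacle, and the place where one must be careful, is the comparison between $\widehat g_\tau(e^{-z\tau})$ and $\widehat g(z)$ for non-integer $\beta$: for $0<\beta<1$ the function $g$ is singular at $t=0$ and one has to split $g$ into a smooth tail plus a piece supported near $0$ (or equivalently use Euler–Maclaurin expansions for the Hurwitz-type zeta sum defining $\widehat g_\tau$) in order to recover the full convergence order of the convolution quadrature, which is the technical heart of Lubich's argument.
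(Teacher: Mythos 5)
First, note that the paper does not prove this lemma at all: it is quoted verbatim from the literature, with the proof deferred to \cite[Theorem 5.2]{Lubich:1988} and \cite[Theorem 2.2]{Lubich:2004} (see also \cite[Section 4]{CuestaLubichPlencia:2006}). So the comparison here is between your sketch and Lubich's argument, and as a proof your proposal has a genuine gap: the displayed pointwise bound with the unspecified ``(correction)'' exponent, together with the comparison of $\widehat g_\tau(e^{-z\tau})$ with $\widehat g(z)$ for non-integer $\beta$, is not an auxiliary technicality --- it \emph{is} the content of the lemma. You acknowledge that the singular case $0<\beta<1$ requires an Euler--Maclaurin/splitting argument and call it ``the technical heart,'' but you do not carry it out, so nothing in the sketch actually produces the exponents $\tau^{\beta}$ versus $\tau^{2}$ or the $t$-powers $t^{\mu-1}$ versus $t^{\mu+\beta-3}$. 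A second, smaller gap: after substituting $\xi=e^{-z\tau}$ the discrete symbol is $2\pi\mathrm{i}/\tau$-periodic in $z$, so the resulting integral lives on a contour truncated at $|\Im z|\le\pi/\tau$ and cannot simply be ``deformed onto the same Hankel contour $\Gamma$''; the truncation region and the behaviour of $\delta(e^{-z\tau})/\tau$ up to $|z\tau|$ of order one must be handled separately (this is where the stability, not just consistency, of the SBD symbol enters).

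It is also worth pointing out that Lubich's actual proof takes a cleaner route that avoids the generating-function comparison altogether: writing $k(t)=\frac{1}{2\pi\mathrm{i}}\int_\Gamma K(z)e^{zt}\,dz$, one represents $K(\partial_t)g(t_n)$ and $K(\bPtau)g(t_n)$ as $\frac{1}{2\pi\mathrm{i}}\int_\Gamma K(z)\,y(t_n,z)\,dz$ and $\frac{1}{2\pi\mathrm{i}}\int_\Gamma K(z)\,y_n(z)\,dz$, where $y(\cdot,z)$ solves the scalar ODE $y'=zy+g$, $y(0)=0$, and $y_n(z)$ is its SBD (BDF2) approximation. The lemma then reduces to error bounds for the multistep method applied to this one-dimensional problem with data $t^{\beta-1}$, uniform in $z$ on $\Gamma$, which are then integrated against $\|K(z)\|\le M|z|^{-\mu}$; the case split $0<\beta\le 2$ versus $\beta\ge 2$ arises exactly from whether the data is smooth enough for the full second order. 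If you want a self-contained proof rather than a citation, reconstructing that reduction is the more economical path; your symbol-comparison route can be made to work, but only after supplying the quantitative estimate you left as a placeholder.
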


Now we can state the error estimate for nonsmooth initial data $v\in L^2(\Omega)$.
\begin{lemma}\label{lem:timenonsmooth2}
Let $u_h$ and $U_h^n$ be the solutions of problem \eqref{eqn:femop} and \eqref{eqn:SBD} with
$v\in L^2(\Omega)$, $U_h^0= v_h = P_hv$ and $f\equiv0$, respectively. Then there holds
\begin{equation*}
   \| u_h(t_n)-U_h^n \|_{L^2(\Omega)} \le c \tau^2 t_n^{-2}  \| v\|_{L^2\II}.
\end{equation*}
\end{lemma}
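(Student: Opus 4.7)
The plan is to mirror the strategy used for the backward Euler case in Lemma \ref{lem:timenonsmooth}, but built around the new error representation \eqref{eqn:Gsbd} and the SBD quadrature error estimate in Lemma \ref{lem:boundSBD}. Starting from
\begin{equation*}
u_h(t_n) - U_h^n = (G(\partial_t)-G(\bPtau))\partial_t^{-1}v_h,
\qquad G(z) = -zA_h(g(z)I+A_h)^{-1},
\end{equation*}
the task reduces to (i) producing a sectorial estimate of the form $\|G(z)\|\le M|z|^{-\mu}$ with the right exponent $\mu$, and (ii) identifying the correct $\beta$ in Lemma \ref{lem:boundSBD} coming from the factor $\partial_t^{-1}v_h$.

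For step (i), I would use the algebraic identity
\begin{equation*}
A_h(g(z)I+A_h)^{-1} = I - g(z)(g(z)I+A_h)^{-1},
\end{equation*}
combined with the resolvent bound $\|(g(z)I+A_h)^{-1}\|\le M/|g(z)|$, which follows exactly as in \eqref{eqn:resolventg} (noting that $g(z)\in\Sigma_{\pi-\theta}$ by Lemma \ref{lem:est-g}). This gives $\|A_h(g(z)I+A_h)^{-1}\|\le 1+M$, and therefore $\|G(z)\|\le c|z|$ on $\Sigma_{\pi-\theta}$; in the notation of \eqref{eqn:Ksect} this corresponds to $\mu=-1$. For step (ii), I would simply observe that $\partial_t^{-1}v_h$, viewed as a function of $t$ acting on the constant-in-time datum $v_h$, equals $t\,v_h$, i.e.\ $g(t)=ct^{\beta-1}$ with $\beta=2$, which falls into the first regime of Lemma \ref{lem:boundSBD}.

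Putting these together, Lemma \ref{lem:boundSBD} yields
\begin{equation*}
\|u_h(t_n)-U_h^n\|_{L^2(\Omega)} \le c\,t_n^{\mu-1}\tau^\beta\,\|v_h\|_{L^2(\Omega)} = c\,\tau^2 t_n^{-2}\,\|v_h\|_{L^2(\Omega)},
\end{equation*}
and the $L^2(\Omega)$-stability of $P_h$ (Lemma \ref{lem:prh-bound}) replaces $\|v_h\|_{L^2(\Omega)}$ by $\|v\|_{L^2(\Omega)}$. The only genuinely delicate point is verifying that $G(z)$ remains analytic in the required sector and that $\mu=-1$ is sharp enough to invoke Lemma \ref{lem:boundSBD}; both follow cleanly from the two ingredients above. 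The $\beta=2$ choice is precisely what dictated the inclusion of the $\partial_t^{-1}$ correction in the SBD construction (see the discussion leading to \eqref{eqn:SBD-v1}), so this is where the second-order temporal accuracy is recovered despite $g(0)\ne 0$ for the underlying convolution.
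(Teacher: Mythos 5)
Your proposal is correct and follows essentially the same route as the paper: the identity $G(z)=-z\bigl(I-g(z)(g(z)I+A_h)^{-1}\bigr)$ with the resolvent bound gives $\|G(z)\|\le c|z|$, and Lemma \ref{lem:boundSBD} with $\mu=-1$, $\beta=2$ applied to \eqref{eqn:Gsbd} plus the $L^2(\Omega)$-stability of $P_h$ yields the estimate. Your explicit remark that $\partial_t^{-1}v_h=t\,v_h$ (hence $\beta=2$) is merely left implicit in the paper's proof.
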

\begin{proof}
By \eqref{eqn:resolvent} and the identity
\begin{equation*}
   G(z) = -z A_h(g(z)I+A_h)^{-1}=-z(I-g(z)(g(z)I+A_h)^{-1})\quad \forall z\in \Sigma_{\pi-\theta},
\end{equation*}
there holds
\begin{equation*}
    \| G(z)\| \le c|z|, \quad \forall z\in \Sigma_{\pi-\theta}.
\end{equation*}
Then \eqref{eqn:Gsbd} and Lemma \ref{lem:boundSBD} (with $\mu=-1$ and $\beta=2$) give
\begin{equation*}
  \| U_h^n-u_h(t_n) \|_{L^2(\Omega)} \le c\tau^2 t_n^{-2} \| v_h \|_{L^2(\Omega)},
\end{equation*}
and the desired result follows directly from the $L^2(\Omega)$ stability of $P_h$.
\end{proof}

Next we turn to smooth initial data $v\in \dH 2$.
\begin{lemma}\label{lem:timesmooth2}
Let $u_h$ and $U_h^n$ be the solutions of problem \eqref{eqn:femop} and \eqref{eqn:SBD} with
$v\in \dH 2$, $U_h^0= v_h=R_hv$ and $f\equiv0$, respectively. Then there holds
\begin{equation*}
   \| u_h(t_n)-U_h^n \|_{L^2(\Omega)} \le c \tau^2 t_n^{-1-\al} \| Av \|_{L^2\II}.
\end{equation*}
\end{lemma}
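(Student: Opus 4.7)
The plan is to parallel Lemma~\ref{lem:timesmooth} (the smooth-data case for the backward-Euler scheme), but replace Lemma~\ref{lem:boundBE} with its SBD counterpart, Lemma~\ref{lem:boundSBD}, and extract a factor of $A_h$ from the convolution kernel so that the right-hand side depends on $\|Av\|_{L^2(\Omega)}$ instead of $\|v\|_{L^2(\Omega)}$.

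I start from the representation \eqref{eqn:Gsbd},
\begin{equation*}
u_h(t_n)-U_h^n=(G(\partial_t)-G(\bPtau))\partial_t^{-1}v_h,\qquad G(z)=-zA_h(g(z)I+A_h)^{-1}.
\end{equation*}
Since $A_h$ commutes with both $(g(z)I+A_h)^{-1}$ and the time-integration operator $\partial_t^{-1}$, I pull the $A_h$ outside the convolution to obtain
\begin{equation*}
u_h(t_n)-U_h^n=(\widetilde G(\partial_t)-\widetilde G(\bPtau))\,\partial_t^{-1}(A_h v_h),\qquad \widetilde G(z):=-z(g(z)I+A_h)^{-1}.
\end{equation*}
The choice $v_h=R_hv$, combined with the identity $A_hR_h=P_hA$, gives $A_hv_h=P_hAv$, whose $L^2(\Omega)$ norm is bounded by $\|Av\|_{L^2(\Omega)}$.

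Next, I estimate $\|\widetilde G(z)\|$ on $\Sigma_{\pi-\theta}$. The discrete resolvent bound \eqref{eqn:resolventg} applied with $A_h$ in place of $A$ yields $\|(g(z)I+A_h)^{-1}\|\le M/|g(z)|$, and since $g(z)=z/(1+\gamma z^\alpha)$,
\begin{equation*}
\|\widetilde G(z)\|\le M\frac{|z|}{|g(z)|}=M\,|1+\gamma z^\alpha|\le c(1+|z|^\alpha),\qquad z\in\Sigma_{\pi-\theta}.
\end{equation*}
Thus $\widetilde G$ satisfies estimates of the form \eqref{eqn:Ksect} both with $\mu=0$ and with $\mu=-\alpha$. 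I then apply Lemma~\ref{lem:boundSBD} to the function $\partial_t^{-1}(A_hv_h)(t)=t\,A_hv_h$, which fits the template $g(t)=ct^{\beta-1}$ with $\beta=2$, treating the two additive terms of the bound on $\|\widetilde G(z)\|$ separately. The $\mu=0$ contribution gives $c\tau^2 t_n^{-1}\|A_hv_h\|_{L^2(\Omega)}$ and the $\mu=-\alpha$ contribution gives $c\tau^2 t_n^{-1-\alpha}\|A_hv_h\|_{L^2(\Omega)}$; for $t_n\in(0,T]$ the latter dominates (absorbing a $T^\alpha$ factor into the constant), and combined with $\|A_hv_h\|_{L^2(\Omega)}\le\|Av\|_{L^2(\Omega)}$ this delivers the claimed bound.

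The one mildly subtle point is the use of the split bound when invoking Lemma~\ref{lem:boundSBD}, whose statement supplies a single exponent $\mu$. This is resolved by writing $\widetilde G=K_1+K_2$ with $\|K_1(z)\|\le c$ and $\|K_2(z)\|\le c|z|^\alpha$ and applying the lemma to each kernel separately; alternatively, the underlying contour-integral proof of Lemma~\ref{lem:boundSBD} accommodates a sum of bounds directly. Everything else --- commuting $A_h$ through the kernel, identifying $\beta=2$, and converting $A_hv_h$ into $P_hAv$ --- is a direct transcription of the pattern already established for the backward-Euler case in Lemma~\ref{lem:timesmooth}.
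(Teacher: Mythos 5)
Your proposal is correct and follows essentially the same route as the paper: the representation \eqref{eqn:Gsbd} with the factor $A_h$ commuted out to give the kernel $-z(g(z)I+A_h)^{-1}$, the resolvent bound $\|{-z(g(z)I+A_h)^{-1}}\|\le M|1+\gamma z^\alpha|$ from \eqref{eqn:resolvent} and Lemma \ref{lem:est-g}, Lemma \ref{lem:boundSBD} with $\beta=2$, and the identity $A_hR_h=P_hA$. Your only departure is the explicit splitting of the kernel to handle the bound $c(1+|z|^\alpha)$ (the paper simply invokes the lemma with $\mu=-\alpha$), which is a slightly more careful rendering of the same step rather than a different argument.
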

\begin{proof}
By setting $G_s(z)= -z(g(z)I+A_h)^{-1}$, $U_h^n-u_h(t_n)$ can be represented by
\begin{equation*}
 U_h^n-u_h(t_n)= (G_s(\bPtau)-G_s(\partial_t))A_h v_h.
\end{equation*}
From \eqref{eqn:resolvent} and Lemma \ref{lem:est-g} we deduce
\begin{equation*}
    \| G_s(z)\| \le  M |z||g(z)|^{-1}\le (1+\gamma |z|^\al), \quad \forall z\in \Sigma_{\pi-\theta}.
\end{equation*}
Now Lemma \ref{lem:boundSBD} (with $\mu=-\al$ and $\beta=2$) gives
\begin{equation*}
  \|U_h^n-u_h(t_n)\|_{L^2(\Omega)} \le c\tau^2 t_n^{-1-\al} \| A_h v_h \|_{L^2(\Omega)},
\end{equation*}
and the desired estimate follows from the identity $A_hR_h=P_hA$.
\end{proof}

Then we have the following error estimates for the fully discrete scheme \eqref{eqn:SBD}.
\begin{theorem}\label{thm:error_fully_smooth2}
Let $u$ and $U_h^n$ be solutions of problem \eqref{eqn:rsp} and \eqref{eqn:SBD} with
$U_h^0$ and $f\equiv0$, respectively. Then the following error estimates hold.
\begin{itemize}
  \item[(a)] If $v\in \dH 2$, and $U_h^0=R_hv$, there holds
   \begin{equation*}
     \| u(t_n)-U_h^n \|_{L^2(\Omega)} \le c(\tau^2 t_n^{-1-\al} + h^2)  \|v\|_{\dH2}.
   \end{equation*}
  \item[(b)] If $v\in L^2(\Omega)$, and $U_h^0=P_hv$, there holds
   \begin{equation*}
      \| u(t_n)-U_h^n \|_{L^2(\Omega)} \le c(\tau^2 t_n^{-2} + h^2 t_n^{\al-1})  \|v\|_{L^2\II}.
   \end{equation*}
\end{itemize}
\end{theorem}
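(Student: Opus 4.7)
The plan is to obtain both estimates by the triangle inequality
\[
\|u(t_n) - U_h^n\|_{L^2(\Omega)} \le \|u(t_n) - u_h(t_n)\|_{L^2(\Omega)} + \|u_h(t_n) - U_h^n\|_{L^2(\Omega)},
\]
which splits the total error into a pure space-discretization error (continuous vs.\ semidiscrete) and a pure time-discretization error (semidiscrete vs.\ fully discrete). Every ingredient needed to bound the two pieces has already been established earlier in the paper, so no new analytic work is required—the proof is essentially an assembly step.

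For part (a), with $v \in \dot H^2(\Omega)$ and $U_h^0 = v_h = R_h v$, I would first invoke Theorem \ref{thm:smooth-initial-op} to bound the spatial error by $c h^2 \|v\|_{\dot H^2(\Omega)}$, uniformly in $t>0$. Then I would apply Lemma \ref{lem:timesmooth2} to bound the temporal error by $c \tau^2 t_n^{-1-\alpha}\|Av\|_{L^2(\Omega)}$, and use the identity $\|Av\|_{L^2(\Omega)} = \|v\|_{\dot H^2(\Omega)}$ from the definition of the $\dot H^2$ norm. Adding the two contributions gives the claimed bound.

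For part (b), with $v \in L^2(\Omega)$ and $U_h^0 = v_h = P_h v$, I would use Theorem \ref{thm:nonsmooth-initial-op} to estimate the spatial error by $c h^2 t_n^{-(1-\alpha)} \|v\|_{L^2(\Omega)} = c h^2 t_n^{\alpha-1}\|v\|_{L^2(\Omega)}$, and Lemma \ref{lem:timenonsmooth2} to estimate the temporal error by $c \tau^2 t_n^{-2}\|v\|_{L^2(\Omega)}$. Summing the two contributions yields exactly the asserted bound.

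Since every step reduces to quoting already-proved results, there is no real obstacle; the only thing to be careful about is matching the norms of the initial data correctly (using $\|Av\|_{L^2(\Omega)} = \|v\|_{\dot H^2(\Omega)}$ in the smooth case, and relying on the $L^2$-stability of $P_h$ plus the identity $A_h R_h = P_h A$ already built into Lemmas \ref{lem:timenonsmooth2} and \ref{lem:timesmooth2} in the nonsmooth case) so that the right-hand sides come out in terms of $\|v\|_{\dot H^2(\Omega)}$ and $\|v\|_{L^2(\Omega)}$, respectively, as stated. One might additionally remark, as done after Theorem \ref{thm:error_fully_smooth}, that by interpolation one can cover intermediate regularity $v \in \dot H^q(\Omega)$, $0 \le q \le 2$, with the unified bound $c(\tau^2 t_n^{-2+(1-\alpha)q/2} + h^2 t_n^{-(1-\alpha)(2-q)/2})\|v\|_{\dot H^q(\Omega)}$, but this is an optional extension rather than part of the main statement.
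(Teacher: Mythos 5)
Your proposal is correct and follows exactly the route the paper intends: the paper states Theorem \ref{thm:error_fully_smooth2} without a separate proof precisely because it is the triangle-inequality assembly of the semidiscrete estimates (Theorems \ref{thm:nonsmooth-initial-op} and \ref{thm:smooth-initial-op}) with the time-discretization bounds (Lemmas \ref{lem:timenonsmooth2} and \ref{lem:timesmooth2}), just as you describe. Your bookkeeping of norms, using $\|Av\|_{L^2(\Omega)}=\|v\|_{\dH 2}$ and the $L^2(\Omega)$-stability of $P_h$, matches the paper's treatment of the analogous backward Euler result.
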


\begin{remark}\label{rem:int_fully2}
By the stability of the scheme, a direct consequence of Lemma \ref{lem:timesmooth2},
and the argument in Remark \ref{rem:int_fully1}, the estimate in Theorem
\ref{thm:error_fully_smooth2}(a) still holds for $v_h=P_hv$. Then by interpolation we have
\begin{equation*}
   \| u(t_n)-U_h^n \|_{L^2(\Omega)} \le c(\tau^2 t_n^{-2+(1-\al)q/2} +h^{2} t^{-(1-\al)(2-q)/2}) \|v\|_{\dot H^q \II},
     \quad 0 \le q \le 2.
\end{equation*}
\end{remark}

\begin{remark}\label{rem:weak2}
In case of very weak initial data $v\in \dH q$, $-1<q<0$, the argument in Remark \ref{rem:weak1} yields
\begin{equation*}
 \| u(t_n) - U_h^n  \|_{L^2\II} \le c(\tau^2 h^q t_n^{-2} + h^{2+q}t^{\al-1}) \|v \|_{\dH q}.
\end{equation*}
\end{remark}

\section{Numerical results}\label{sec:numeric}
In this part, we present numerical results to verify the convergence theory in Sections \ref{sec:semidiscrete}
and \ref{sec:fullydiscrete}. We shall consider one- and two-dimensional examples with smooth,
nonsmooth and very weak initial data. In the one-dimensional case, we take $\Omega=(0,1)$, and in the
two-dimensional case $\Omega=(0,1)^2$.
Here we use the notation $\chi_S$ for  the characteristic function of the set $S$.
The following four cases are considered.
\begin{itemize}
\item[(a)] smooth: $v=\sin(2\pi x)$ which is in $H^2\II\cap H_0^1\II$.
\item[(b)] nonsmooth: $v=\chi_{(0,1/2]}$;
the jump at $x=1/2$ and $v(0) \not = 0$ lead to $v \notin \dot H^1(\Omega)$; but
for any $\epsilon\in(0,1/2)$, $v\in \dot H^{{1/2}-\epsilon}\II$.
\item[(c)] very weak data: $v=\delta_{1/2}(x)$ which is a Dirac $\delta$-function
concentrated at $x=0.5$. By Sobolev imbedding theorem, $v\in \dH {-1/2-\epsilon}$ for $\epsilon>0$.
\item[(d)] two-dimensional example: $v=\chi_{(0,1/2]\times(0,1)}$ which is in  $\dot H^{{1/2}-\epsilon}\II$
for any $\epsilon>0$.
\end{itemize}

In our experiments, we fix the parameter $\g=1$ in \eqref{eqn:rsp} for all cases.
We examine separately the spatial and temporal convergence rates at $t=0.1$.
For the case of nonsmooth initial data, we are especially interested in the errors
for $t$ close to zero.
The exact solutions to these examples can be expressed in terms of generalized
Mittag-Leffler functions, which however is difficult to
compute, and hence we compute the reference solution on a very refined mesh.
We report   
the normalized errors $\| e^n \|_{L^2\II}/\| v
\|_{L^2\II}$ and $\| e^n \|_{\dot H^1\II}/\| v \|_{L^2\II}$,
$e^n = u(t_n)-U_h^n$,
for both smooth and nonsmooth data.

In our computation, we divide the unit interval $(0,1)$ into $K=2^k$ equally spaced subintervals, with a
mesh size $h=1/K$. The finite element space $X_h$ consists of continuous piecewise linear functions.
Similarly, we take the uniform temporal mesh with a time step size $\tau=t/N$, with $t$ being the time
of interest.

\subsection{Numerical results for example (a)}
First, we fix the mesh size $h$ at
$h = 2^{-11}$ so that the error incurred by spatial discretization is negligible, which enable us to
examine the temporal convergence rate. In Table \ref{tab:smooth_time}, we show the $L^2\II$-norm of the
error at $t=0.1$ for different $\al$ values. In the table, BE and SBD denote the backward Euler method
and the second-order backward difference method, respectively, \texttt{rate} refers to the empirical
convergence rate when the time step size $\tau$ (or the mesh size $h$) halves, and the numbers in the bracket denote
theoretical convergence rates. In Figure \ref{fig:smooth_time} we plot the results for $\al=0.5$ in a
log-log scale. A convergence rate of order $O(\tau)$ and $O(\tau^2)$ is observed for the BE method and
the SBD method, respectively, which agrees well with our convergence theory. Further, we observe that
the error decreases as the fractional order $\alpha$ increases.

\begin{table}[htb!]
\caption{The $L^2\II$-norm of the error for example (a): $t=0.1$ and $h=2^{-11}$.}
\label{tab:smooth_time}
\begin{center}
\vspace{-.3cm}
     \begin{tabular}{|c|c|ccccc|c|}
     \hline
     & $\tau$ & $1/5$ & $1/10$ &$1/20$ &$1/40$ & $1/80$  &rate \\
     \hline
     BE
     & $\al=0.1$     &6.75e-3 &2.42e-3 &1.00e-3 &4.55e-4 &2.15e-4   &$\approx$ 1.15 (1.00) \\
     \cline{2-7}
     & $\al=0.5$     &3.68e-3  &1.73e-3 &8.42e-4 &4.13e-4 &2.03e-4   &$\approx$ 1.04 (1.00) \\
     \cline{2-7}
     & $\al=0.9$      &4.12e-4 &2.03e-4 &1.00e-4 &4.96e-5 &2.43e-5   &$\approx$ 1.03 (1.00) \\
     \hline
     SBD
     & $\al=0.1$      &5.59e-3 &4.82e-4 &1.18e-4 &2.77e-5 &6.66e-6   &$\approx$ 2.06 (2.00)\\
     \cline{2-7}
     & $\al=0.5$      &1.05e-3 &2.39e-4  &5.33e-5 &1.28e-5 &3.14e-6   &$\approx$ 2.08 (2.00) \\
     \cline{2-7}
     & $\al=0.9$      &7.62e-5  &1.64e-5 &3.86e-6 &9.48e-7 &2.46e-7  &$\approx$ 2.06 (2.00) \\
     \hline
     \end{tabular}
\end{center}
\end{table}

\begin{figure}[htb!]
  \includegraphics[trim = 1cm .1cm 2cm 0.0cm, clip=true,width=8cm]{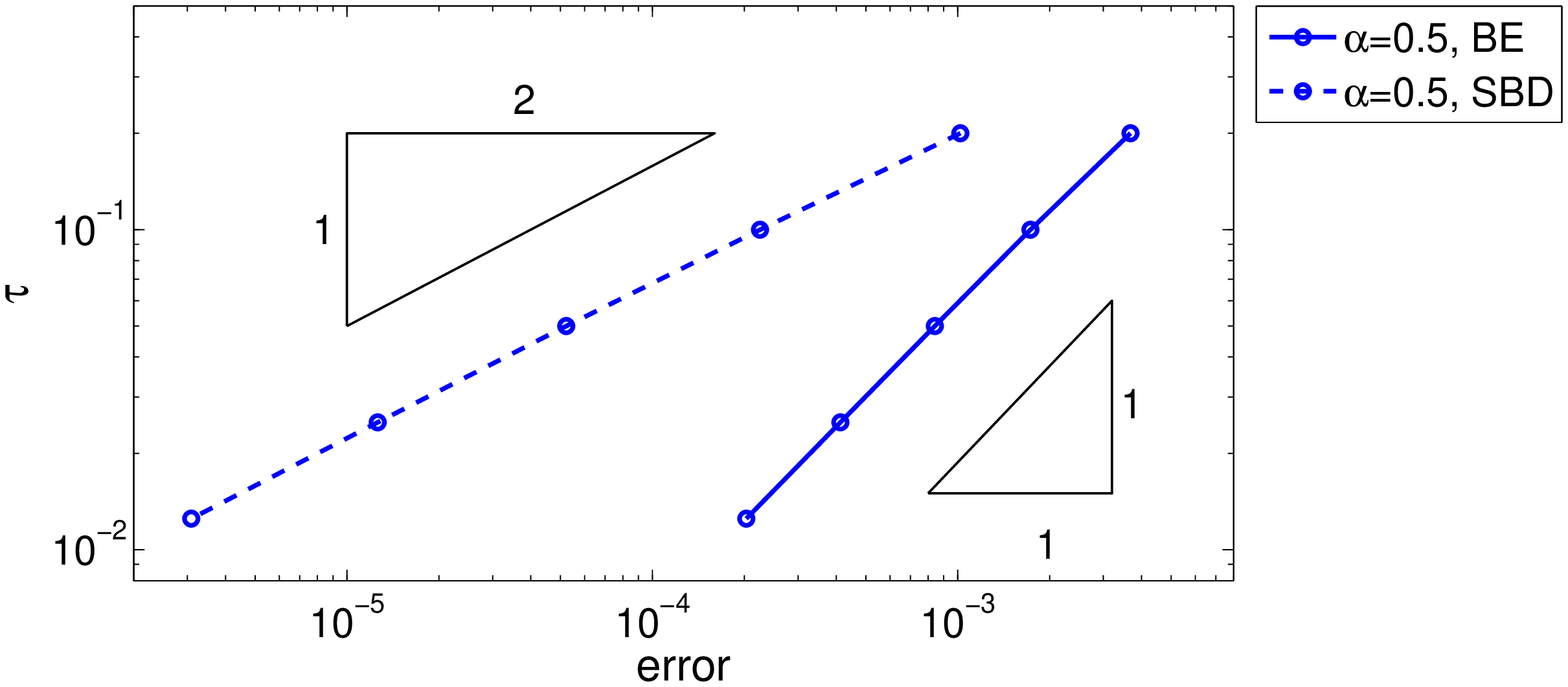}
  \caption{Error plots for example (a) at $t=0.1$, with $\alpha=0.5$ and $h=2^{-11}$.}
  \label{fig:smooth_time}
\end{figure}

In Table \ref{tab:smooth_space} and Figure \ref{fig:smooth_space}, we show the $L^2\II$- and $H^1\II$-norms
of the error at $t=0.1$ for the BE scheme. We set $\tau=2\times10^{-5}$
and check the spatial convergence rate. The numerical results show $O(h^{2})$ and $O(h)$ convergence
rates respectively for the $L^2\II$- and $H^1\II$-norms of the error, which fully confirm Theorem
\ref{thm:smooth-initial-op}. Further, the empirical convergence rate is almost independent of the
fractional order $\alpha$.
\begin{table}[h!]
\caption{Error for example (a): $t=0.1$,  $h = 2^{-k}$
and $\tau=5\times10^{-5}$. }\label{tab:smooth_space}
\begin{center}
\begin{tabular}{@{}|c|c|ccccc|c|@{}}
     \hline
      $\al$ & $k$ & $3$ & $4$ & $5$ &$6$ & $7$ & rate\\
     \hline
     $\al=0.1$ & $L^2$-norm & 6.16e-4 &1.59e-4  &4.00e-5  &9.90e-6  &2.38e-6  & $\approx 2.01$ ($2.00$) \\
     & $H^1$-norm           & 1.19e-2 & 5.99e-3 & 2.99e-3 & 1.49e-3 & 7.26e-4 & $\approx 1.01$ ($1.00$)  \\
     \hline
     $\al=0.5$ & $L^2$-norm  & 1.58e-3 &4.00e-4  &1.00e-4  &2.48e-5  &5.95e-6 & $\approx 2.01$ ($2.00$)  \\
     & $H^1$-norm          & 3.92e-2 & 1.98e-2 & 9.88e-3 & 4.91e-3 & 2.40e-3  & $\approx 1.01$ ($1.00$)  \\
     \hline
     {$\al=0.9$} & $L^2$-norm   & 1.38e-3 &3.47e-4  &8.67e-5  &2.15e-5  &5.16e-6 & $\approx 2.01$ ($2.00$)\\
     & $H^1$-norm           & 3.56e-2 & 1.79e-2 & 8.96e-3 & 4.45e-3 & 2.17e-3  & $\approx 1.01$ ($1.00$)\\
     \hline
     \end{tabular}
\end{center}
\end{table}

\begin{figure}[htb!]
  \includegraphics[trim = 1cm .1cm 2cm 0.0cm, clip=true,width=10cm]{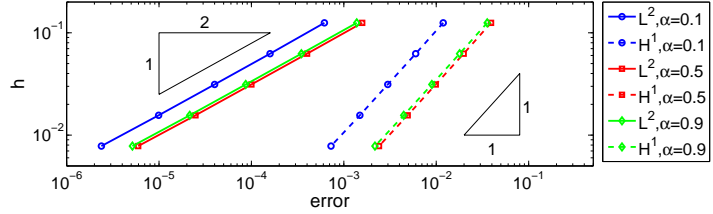}
  \caption{Error for example (a): $t=0.1$, $\tau = 2\time10^{-5}$,
  $\alpha=0.1$, $0.5$ and $0.9$.}
  \label{fig:smooth_space}
\end{figure}

\subsection{Numerical results for example (b)}
In Tables \ref{tab:nonsmooth_time} and \ref{tab:nonsmooth_space} we present the results for example (b). The 
temporal convergence rate is $O(\tau)$ and $O(\tau^2)$ for the BE and the SBD method, respectively, cf.
Table \ref{tab:nonsmooth_time}, and the spatial convergence rate is of order $O(h^2)$ in $L^2\II$-norm and 
$O(h)$ in $H^1\II$-norm, cf. Table \ref{tab:nonsmooth_space}. For nonsmooth initial data, we are especially 
interested in errors for $t$ close to zero. Thus we also present the error at $t = 0.01$ and $t = 0.001$ in 
Table \ref{tab:nonsmooth_space}. The numerical results fully confirm the predicted rates.

Further, in Table \ref{tab:check_singular} and Figure \ref{fig:singular} we show the $L^2\II$-norm of the 
error for examples (a) and (b), for fixed $h = 2^{-6}$ and $t\to 0$. To check the spatial discretization error, 
we fix time step $\tau$ at $\tau=t/1000$ and use the SBD method so that
the temporal discretization error is negligible. We observe that in the smooth case, i.e., example (a), the spatial
error essentially stays unchanged, whereas in the nonsmooth case, i.e., example (b), it deteriorates as $t\rightarrow0$.
In example (b) the initial data $v\in \dot H^{1/2-\epsilon}\II$ for any
$\epsilon>0$, and by Remark \ref{rem:int_fully2}, the error grows like $O(t^{-3\al/4})$ as $t\rightarrow0$.
The empirical rate in Table \ref{tab:check_singular} and Figure \ref{fig:singular} agrees well with the
theoretical prediction, i.e., $-3\al/4=-0.375$ for $\al=0.5$.

\begin{table}[htb!]
\caption{The $L^2\II$-norm of the error for example (b)
 at $t=0.1$, with $h=2^{-11}$.}
\label{tab:nonsmooth_time}
\begin{center}
\vspace{-.3cm}
     \begin{tabular}{|c|c|ccccc|c|}
     \hline
     & $\tau$ & $1/5$ & $1/10$ &$1/20$ &$1/40$ & $1/80$  &rate \\
     \hline
     BE
     & $\al=0.1$      &2.82e-2 &1.42e-2 &7.13e-3 &3.56e-3 &1.76e-3   &$\approx$ 1.00 (1.00) \\
     \cline{2-7}
     & $\al=0.5$       &8.67e-3 &4.18e-3 &2.05e-3 &1.01e-3 &4.97e-4  &$\approx$ 1.02 (1.00) \\
     \cline{2-7}
     & $\al=0.9$      &9.06e-4 &4.47e-4 &2.21e-4 &1.09e-4 &5.42e-5   &$\approx$ 1.02 (1.00) \\
     \hline
     SBD
     & $\al=0.1$      &7.14e-3 &1.61e-3 &3.92e-4 &9.63e-5 &2.38e-5   &$\approx$ 2.05 (2.00)\\
     \cline{2-7}
     & $\al=0.5$      &2.46e-3&5.05e-4 &1.17e-4 &2.82e-5 &6.91e-6  &$\approx$ 2.06 (2.00) \\
     \cline{2-7}
     & $\al=0.9$      &1.67e-4 &3.58e-5 &8.40e-6 &2.04e-6 &5.11e-7  &$\approx$ 2.08 (2.00) \\
     \hline
     \end{tabular}
\end{center}
\end{table}

\begin{table}[h!]
\caption{Error for example (b): $\al=0.5$, 
$h = 2^{-k}$ and $N=1000$.
}\label{tab:nonsmooth_space}
\begin{center}
\begin{tabular}{@{}|c|c|ccccc|c|@{}}
     \hline
      $t$ & $k$ & $3$ & $4$ & $5$ &$6$ & $7$ & rate\\
     \hline
     $t=0.1$ & $L^2$-norm & 1.63e-3 &4.09e-4  &1.02e-4  &2.55e-5  &6.30e-6  & $\approx 2.00$ ($2.00$) \\
     & $H^1$-norm           & 4.04e-2 & 2.02e-2 & 1.01e-2 & 5.04e-3 & 2.51e-3 & $\approx 1.00$ ($1.00$)  \\
     \hline
     {$t=0.01$} & $L^2$-norm  & 5.87e-3 &1.47e-3  &3.66e-4  &9.13e-5  &2.26e-5 & $\approx 2.00$ ($2.00$)  \\
     & $H^1$-norm          & 1.62e-1 & 8.08e-2 & 4.04e-2 & 2.02e-2 & 1.00e-2  & $\approx 1.00$ ($1.00$)  \\
     \hline
     {$t=0.001$} & $L^2$-norm   & 1.47e-2 & 3.66e-3 &9.15e-4  &2.28e-4  &5.65e-5 & $\approx 2.00$ ($2.00$)\\
     & $H^1$-norm           & 4.48e-1 & 2.24e-1 & 1.12e-1 & 5.60e-2 & 2.78e-2  & $\approx 1.00$ ($1.00$)\\
     \hline
     \end{tabular}
\end{center}
\end{table}

\begin{table}[htb!]
\caption{The $L^2\II$-norm of the error for examples (a) and (b) with $\al=0.5$, $h=2^{-6}$,
and $t \to 0$.}\label{tab:check_singular}
\center
     \begin{tabular}{|c|cccccc|c|}
     \hline
     $t$&   1e-3 & 1e-4 & 1e-5 & 1e-6 & 1e-7 & 1e-8  &rate\\
     \hline
     (a)  & 2.48e-4 & 3.07e-4 & 3.27e-4 & 3.46e-4 & 3.55e-4 & 3.58e-4 & $\approx$ -0.02 (0) \\
     (b)  & 2.28e-4 & 5.07e-4 & 1.22e-3 & 2.89e-3 & 6.78e-3 & 1.56e-2 & $\approx$ -0.37 (-0.37) \\
     \hline
     \end{tabular}
\end{table}

\begin{figure}[htb!]
  \includegraphics[trim = 1cm .1cm 2cm 0.0cm, clip=true,width=8cm]{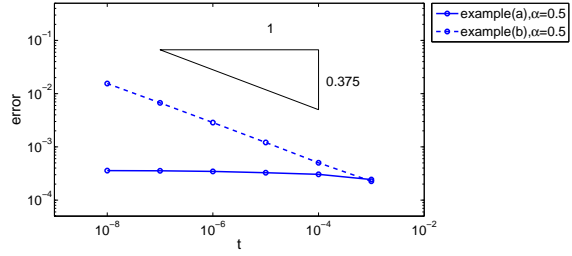}
  \caption{Error plots for examples (a) and (b) with $h=2^{-6}$,
  $\alpha=0.5$ for $t\rightarrow0$.}\label{fig:singular}
\end{figure}

\subsection{Numerical results for example (c)}
In the case of very weak data, according to Remarks \ref{rem:weak1} and \ref{rem:weak2},
we can only expect spatial convergence for a small time step size $\tau$. The results in Table \ref{tab:weak_space}
indicate a superconvergence phenomenon with a rate $O(h^2)$ in the $L^2\II$-norm and $O(h)$ in the $H^1\II$-norm.
This is attributed to the fact that in one dimension the solution with the
Dirac $\delta$-function as the initial data is smooth from both sides of the support point and
the finite element spaces $X_h$ have good approximation property. When the singularity
point $x=1/2$ is not aligned with the grid, Table \ref{tab:weak_space2} shows an $O(h^{3/2})$
and $O(h^{1/2})$ rate for the $L^2\II$- and $H^1\II$-norm of the error, respectively.

\begin{table}[h!]
\caption{Error  for  example (c):  $\al=0.5$,
$h = 2^{-k}$, and $N=1000$. }\label{tab:weak_space}
\begin{center}
\begin{tabular}{@{}|c|c|ccccc|c|@{}}
     \hline
      $t$ & $k$ & $3$ & $4$ & $5$ &$6$ & $7$ & rate\\
     \hline
     $t=0.1$ & $L^2$-norm & 1.19e-4 &2.98e-5  &7.45e-6  &1.86e-6  &4.62e-7  & $\approx 2.00$ ($1.50$) \\
     & $H^1$-norm           & 5.35e-3 & 2.69e-3 & 1.35e-3 & 6.72e-4 & 3.34e-4 & $\approx 1.00$ ($0.50$)  \\
     \hline
     {$t=0.01$} & $L^2$-norm  & 2.41e-3 &6.04e-4  &1.51e-4  &3.77e-5  &9.31e-6 & $\approx 2.00$ ($1.50$)  \\
     & $H^1$-norm          & 3.98e-2 & 1.99e-2 & 9.92e-3 & 4.95e-3 & 2.46e-3  & $\approx 1.00$ ($0.50$)  \\
     \hline
     {$t=0.001$} & $L^2$-norm   & 1.25e-2 & 3.12e-3 &7.80e-4  &1.94e-4  &4.83e-5 & $\approx 2.00$ ($1.50$)\\
     & $H^1$-norm           & 5.00e-1 & 2.50e-1 & 1.25e-1 & 6.23e-2 & 3.09e-2  & $\approx 1.00$ ($0.50$)\\
     \hline
     \end{tabular}
\end{center}
\end{table}

\begin{table}[h!]
\caption{Error for example (c): $\al=0.5$,
$h = 1/(2^{k}+1)$ and $N=1000$.}\label{tab:weak_space2}
\begin{center}
\begin{tabular}{@{}|c|c|ccccc|c|@{}}
     \hline
      $t$ & $k$ & $3$ & $4$ & $5$ &$6$ & $7$ & rate\\
     \hline
     $t=0.1$ & $L^2$-norm & 5.84e-3 &2.22e-3  &8.15e-4  &2.93e-4  &1.04e-4  & $\approx 1.50$ ($1.50$) \\
     & $H^1$-norm           & 1.79e-1 & 1.29e-1 & 9.16e-2 & 6.44e-2 & 4.45e-2 & $\approx 0.52$ ($0.50$)  \\
     \hline
     {$t=0.01$} & $L^2$-norm  & 2.42e-2 &9.54e-3  &3.57e-3  &1.30e-3  &4.63e-4 & $\approx 1.48$ ($1.50$)  \\
     & $H^1$-norm          & 7.77e-1 & 5.68e-1 & 4.07e-1 & 2.87e-1 & 1.98e-1  & $\approx 0.51$ ($0.50$)  \\
     \hline
     {$t=0.001$} & $L^2$-norm   & 8.01e-2 & 3.27e-2 &1.25e-2  &4.57e-3  &1.64e-3 & $\approx 1.46$ ($1.50$)\\
     & $H^1$-norm           & 2.65e0 & 1.97e0 & 1.43e0 & 1.02e0 & 7.05e-1  & $\approx 0.49$ ($0.50$)\\
     \hline
     \end{tabular}
\end{center}
\end{table}

\subsection{Numerical results for example (d)}
Here we consider a two-dimensional example on the unit square $\Omega =(0,1)^2$ for the nonsmooth
initial data. To discretize the problem, we divide the unit interval $(0,1)$ into $K=2^k$
equally spaced subintervals with a mesh size $h=1/K$ so that the domain is divided
into $K^2$ small squares. We get a symmetric triangulation of the domain by connecting
the diagonal of each small square. Table \ref{tab:nonsmooth_time2D} shows a temporal convergence
rate of first order and second order for the BE and SBD method, respectively.
Spatial errors at $t=0.1$, $0.01$ and $0.001$ are showed in Table \ref{tab:nonsmooth_space2D},
which imply a convergence with a rate of $O(h^2)$ in the $L^2\II$-norm and $O(h)$ in the $H^1\II$-norm.
In Figure \ref{fig:2D_time} and \ref{fig:2D_space} we plot the results shown in Tables
\ref{tab:nonsmooth_time2D} and \ref{tab:nonsmooth_space2D}, respectively.
All numerical results confirm our convergence theory.

\begin{table}[htb!]
\caption{The $L^2$-norm of the error for example (d) at $t=0.1$, with $\al=0.5$ and
$h=2^{-9}$.}
\label{tab:nonsmooth_time2D}
\begin{center}
\vspace{-.3cm}
     \begin{tabular}{|c|c|ccccc|c|}
     \hline
     & $\tau$ & $1/5$ & $1/10$ &$1/20$ &$1/40$ & $1/80$  &rate \\
     \hline
     BE
     & $\al=0.5$       &4.53e-3 &2.15e-3 &1.04e-3 &5.17e-4 &2.56e-4  &$\approx$ 1.03 (1.00) \\
     \hline
     SBD
     & $\al=0.5$      &1.33e-3 &2.80e-4 &6.48e-5 &1.56e-5 &3.79e-6  &$\approx$ 2.11 (2.00) \\
     \hline
     \end{tabular}
\end{center}
\end{table}

\begin{figure}[htb!]
  \includegraphics[trim = 1cm .1cm 2cm 0.0cm, clip=true,width=8cm]{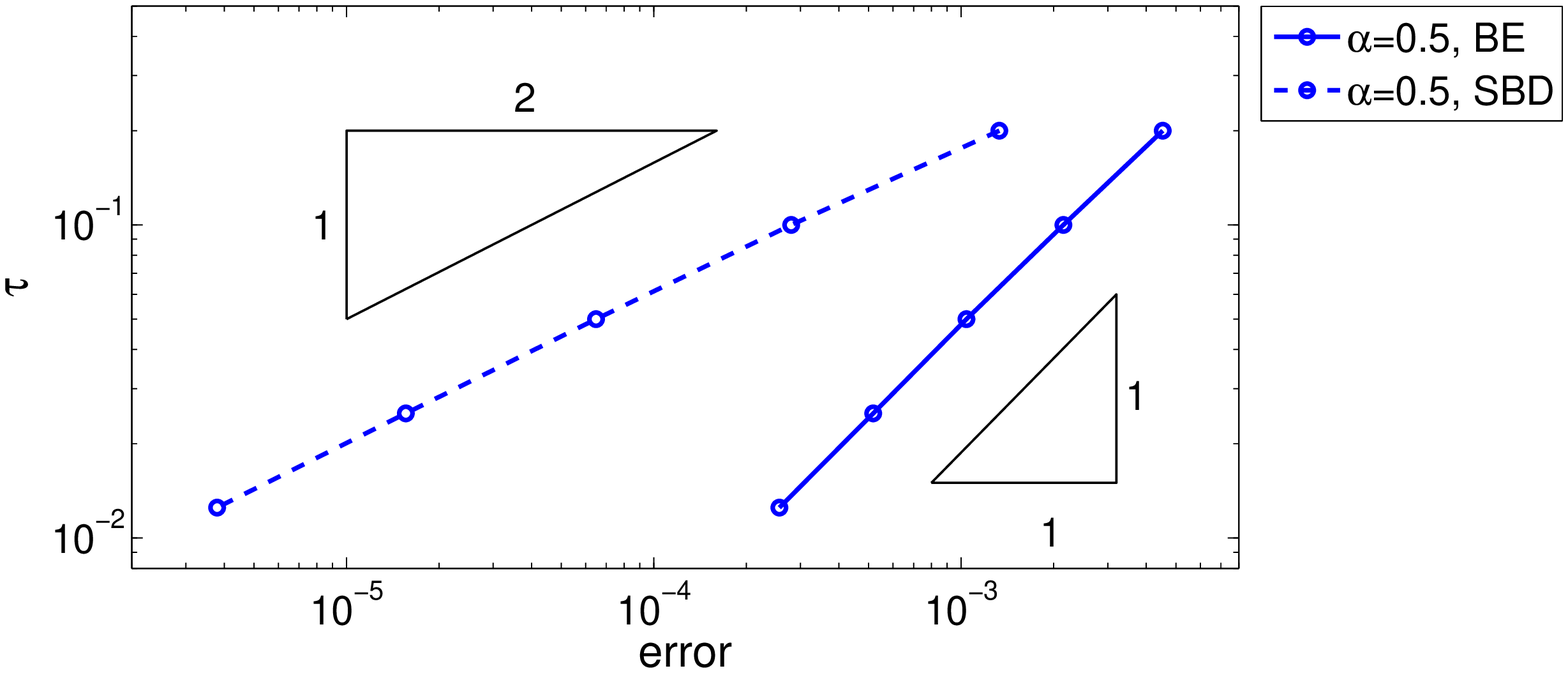}
  \caption{Error plots for example (d) at $t=0.1$ with $\alpha=0.5$ and $h=2^{-9}$.}
  \label{fig:2D_time}
\end{figure}

\begin{table}[h!]
\caption{Error for example (d): $\al=0.5$, $h = 2^{-k}$ and $N=1000$.}\label{tab:nonsmooth_space2D}
\begin{center}
\begin{tabular}{@{}|c|c|ccccc|c|@{}}
     \hline
      $t$ & $k$ & $3$ & $4$ & $5$ &$6$ & $7$ & rate\\
     \hline
     $t=0.1$ & $L^2$-norm & 1.95e-3 &5.02e-4  &1.26e-4  &3.12e-5  &7.61e-6  & $\approx 2.01$ ($2.00$) \\
     & $H^1$-norm           & 3.29e-2 & 1.63e-2 & 8.11e-3 & 4.03e-3 & 1.97e-3 & $\approx 1.00$ ($1.00$)  \\
     \hline
     $t=0.01$ & $L^2$-norm  & 7.79e-3 &2.00e-3  &5.03e-4  &1.25e-4  &2.98e-5 & $\approx 2.02$ ($2.00$)  \\
     & $H^1$-norm          & 1.43e-1 & 7.09e-2 & 3.53e-2 & 1.75e-2 & 8.56e-3  & $\approx 1.01$ ($1.00$)  \\
     \hline
     $t=0.001$ & $L^2$-norm   & 1.97e-2 & 5.09e-3 &1.28e-3  &3.19e-4  &7.05e-5 & $\approx 2.00$ ($2.00$)\\
     & $H^1$-norm           & 4.44e-1 & 2.22e-1 & 1.11e-1 & 5.52e-2 & 2.69e-2  & $\approx 1.01$ ($1.00$)\\
     \hline
     \end{tabular}
\end{center}
\end{table}

\begin{figure}[htb!]
  \includegraphics[trim = 1cm .1cm 2cm 0.0cm, clip=true,width=10cm]{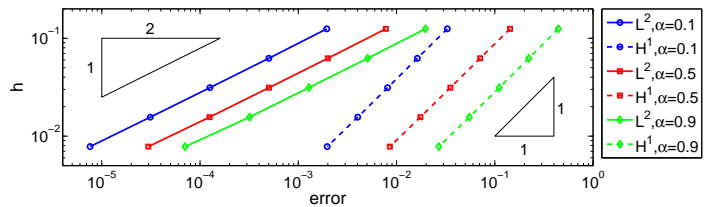}
  \caption{Error plots of example (d):
  $\alpha=0.1, 0.5, 0.9$ and $N=1000$ at $t=0.1$, $0.01$ and $0.001$.}\label{fig:2D_space}
\end{figure}

\section{Concluding remarks}
In this work, we have studied the homogeneous problem for the Rayleigh-Stokes equation in a second grade
generalized flow. The Sobolev regularity of the solution was established using an operator theoretic
approach. A space semidiscrete scheme based on the Galerkin finite element method and two fully discrete
schemes based on the backward Euler method and second-order backward difference method and related
convolution quadrature were developed and optimal with respect to the data regularity error estimates
were provided for both semidiscrete and fully discrete schemes. Extensive numerical experiments
fully confirm the sharpness of our convergence analysis.

\section*{Acknowledgements}
The authors are grateful to Prof. Christian Lubich for his helpful comments on an earlier version
of the paper, which led to a significant improvement of the presentation in Section \ref{sec:fullydiscrete},
and an anonymous referee for many constructive comments.
The research of B. Jin has been supported by NSF Grant DMS-1319052, and R. Lazarov was supported in parts
by NSF Grant DMS-1016525 and also by Award No. KUS-C1-016-04, made by King Abdullah University of Science and Technology (KAUST).

\bibliographystyle{abbrv}
\bibliography{frac}

\begin{thebibliography}{10}

\bibitem{ChenLiuAnh:2008}
C.-M. Chen, F.~Liu, and V.~Anh.
\newblock Numerical analysis of the {R}ayleigh-{S}tokes problem for a heated
  generalized second grade fluid with fractional derivatives.
\newblock {\em Appl. Math. Comput.}, 204(1):340--351, 2008.

\bibitem{ChenLiuAnh:2009}
C.-M. Chen, F.~Liu, and V.~Anh.
\newblock A {F}ourier method and an extrapolation technique for {S}tokes' first
  problem for a heated generalized second grade fluid with fractional
  derivative.
\newblock {\em J. Comput. Appl. Math.}, 223(2):777--789, 2009.

\bibitem{Ciarlet:2002}
P.~G. Ciarlet.
\newblock {\em The {F}inite {E}lement {M}ethod for {E}lliptic {P}roblems}.
\newblock SIAM, Philadelphia, PA, 2002.

\bibitem{CuestaLubichPlencia:2006}
E.~Cuesta, C.~Lubich, and C.~Palencia.
\newblock Convolution quadrature time discretization of fractional
  diffusion-wave equations.
\newblock {\em Math. Comp.}, 75(254):673--696, 2006.

\bibitem{FetecauJamilFetecauVieru:2009}
C.~Fetecau, M.~Jamil, C.~Fetecau, and D.~Vieru.
\newblock The {R}ayleigh-{S}tokes problem for an edge in a generalized
  {O}ldroyd-{B} fluid.
\newblock {\em Z. Angew. Math. Phys.}, 60(5):921--933, 2009.

\bibitem{FujitaSuzuki:1991}
H.~Fujita and T.~Suzuki.
\newblock Evolution problems.
\newblock In {\em Handbook of {N}umerical {A}nalysis, {V}ol.\ {II}}, Handb.
  Numer. Anal., II, pages 789--928. North-Holland, Amsterdam, 1991.

\bibitem{GiraultSaadouni:2007}
V.~Girault and M.~Saadouni.
\newblock On a time-dependent grade-two fluid model in two dimensions.
\newblock {\em Comput. Math. Appl.}, 53(3-4):347--360, 2007.

\bibitem{JinLazarovPasciakZhou:2013}
B.~Jin, R.~Lazarov, J.~Pasciak, and Z.~Zhou.
\newblock Galerkin {FEM} for fractional order parabolic equations with initial
  data in {$H^{-s},~0\le s \le 1$}.
\newblock LNCS 8236 (Proc. 5th Conf. Numer. Anal. Appl. (June 15-20, 2012)),
  Springer, Berlin, pp. 24--37., 2013.

\bibitem{JinLazarovPasciakZhou:2013a}
B.~Jin, R.~Lazarov, J.~Pasciak, and Z.~Zhou.
\newblock Error analysis of a finite element method for the space-fractional
  parabolic equation.
\newblock {\em SIAM J. Numer. Anal.}, 52(5):2272--2294, 2014.

\bibitem{JinLazarovZhou:2013}
B.~Jin, R.~Lazarov, and Z.~Zhou.
\newblock Error estimates for a semidiscrete finite element method for
  fractional order parabolic equations.
\newblock {\em SIAM J. Numer. Anal.}, 51(1):445--466, 2013.

\bibitem{KilbasSrivastavaTrujillo:2006}
A.~Kilbas, H.~Srivastava, and J.~Trujillo.
\newblock {\em Theory and {A}pplications of {F}ractional {D}ifferential
  {E}quations}.
\newblock Elsevier, Amsterdam, 2006.

\bibitem{LinJiang:2011}
Y.~Lin and W.~Jiang.
\newblock Numerical method for {S}tokes' first problem for a heated generalized
  second grade fluid with fractional derivative.
\newblock {\em Numer. Methods Partial Diff. Eq.}, 27(6):1599--1609, 2011.

\bibitem{Lubich:1988}
C.~Lubich.
\newblock Convolution quadrature and discretized operational calculus. {I}.
\newblock {\em Numer. Math.}, 52(2):129--145, 1988.

\bibitem{Lubich:2004}
C.~Lubich.
\newblock Convolution quadrature revisited.
\newblock {\em BIT}, 44(3):503--514, 2004.

\bibitem{LubichSloanThomee:1996}
C.~Lubich, I.~H. Sloan, and V.~Thom{\'e}e.
\newblock Nonsmooth data error estimates for approximations of an evolution
  equation with a positive-type memory term.
\newblock {\em Math. Comp.}, 65(213):1--17, 1996.

\bibitem{MainardiGorenflo:2007}
F.~Mainardi and R.~Gorenflo.
\newblock Time-fractional derivatives in relaxation processes: a tutorial
  survey.
\newblock {\em Fract. Calc. Appl. Anal.}, 10(3):269--308, 2007.

\bibitem{McLean:2010}
W.~McLean.
\newblock Regularity of solutions to a time-fractional diffusion equation.
\newblock {\em ANZIAM J.}, 52(2):123--138, 2010.

\bibitem{McLeanMustapha:2009}
W.~McLean and K.~Mustapha.
\newblock Convergence analysis of a discontinuous {G}alerkin method for a
  sub-diffusion equation.
\newblock {\em Numer. Algor.}, 52(1):69--88, 2009.

\bibitem{McLeanThomee:2010a}
W.~{McLean} and V.~Thom{\'e}e.
\newblock Maximum-norm error analysis of a numerical solution via {L}aplace
  transformation and quadrature of a fractional-order evolution equation.
\newblock {\em IMA J. Numer. Anal.}, 30(1):208--230, 2010.

\bibitem{McLeanThomee:2010b}
W.~{McLean} and V.~Thom{\'e}e.
\newblock Numerical solution via {L}aplace transforms of a fractional order
  evolution equation.
\newblock {\em J. Integral Equations Appl.}, 22(1):57--94, 2010.

\bibitem{MohebbiAbbaszadehDehghan:2013}
A.~Mohebbi, M.~Abbaszadeh, and M.~Dehghan.
\newblock Compact finite difference scheme and {RBF} meshless approach for
  solving 2d {Rayleigh-Stokes} problem for a heated generalized second grade
  fluid with fractional derivatives.
\newblock {\em Comput. Methods Appl. Mech. Eng.}, 264:163--177, 2013.

\bibitem{MustaphaMcLean:2011}
K.~Mustapha and W.~{McLean}.
\newblock Piecewise-linear, discontinuous {G}alerkin method for a fractional
  diffusion equation.
\newblock {\em Numer. Algor.}, 56:159--184, 2011.

\bibitem{MustaphaMcLean:2013}
K.~Mustapha and W.~McLean.
\newblock Superconvergence of a discontinuous {G}alerkin method for fractional
  diffusion and wave equations.
\newblock {\em SIAM J. Numer. Anal.}, 51(1):491--515, 2013.

\bibitem{Podlubny:1999}
I.~Podlubny.
\newblock {\em Fractional {D}ifferential {E}quations}.
\newblock Academic Press, San Diego, CA, 1999.

\bibitem{Pruss:1993}
J.~Pr{\"u}ss.
\newblock {\em Evolutionary {I}ntegral {E}quations and {A}pplications},
  volume~87 of {\em Monographs in Mathematics}.
\newblock Birkh\"auser Verlag, Basel, 1993.

\bibitem{SakamotoYamamoto:2011}
K.~Sakamoto and M.~Yamamoto.
\newblock Initial value/boundary value problems for fractional diffusion-wave
  equations and applications to some inverse problems.
\newblock {\em J. Math. Anal. Appl.}, 382(1):426--447, 2011.

\bibitem{Sanz-Serna:1988}
J.~M. Sanz-Serna.
\newblock A numerical method for a partial integro-differential equation.
\newblock {\em SIAM J. Numer. Anal.}, 25(2):319--327, 1988.

\bibitem{ShenTanZhaoMasuoka:2006}
F.~Shen, W.~Tan, Y.~Zhao, and T.~Masuoka.
\newblock The {R}ayleigh-{S}tokes problem for a heated generalized second grade
  fluid with fractional derivative model.
\newblock {\em Nonlinear Anal. Real World Appl.}, 7(5):1072--1080, 2006.

\bibitem{Thomee:2006}
V.~Thom{\'e}e.
\newblock {\em Galerkin {F}inite {E}lement {M}ethods for {P}arabolic
  {P}roblems}, volume~25 of {\em Springer Series in Computational Mathematics}.
\newblock Springer-Verlag, Berlin, second edition, 2006.

\bibitem{Widder:1941}
D.~V. Widder.
\newblock {\em The {L}aplace {T}ransform}.
\newblock Princeton Mathematical Series, v. 6. Princeton University Press,
  Princeton, N. J., 1941.

\bibitem{Wu:2009}
C.~Wu.
\newblock Numerical solution for {S}tokes' first problem for a heated
  generalized second grade fluid with fractional derivative.
\newblock {\em Appl. Numer. Math.}, 59(10):2571--2583, 2009.

\bibitem{ZhaoYang:2009}
C.~Zhao and C.~Yang.
\newblock Exact solutions for electro-osmotic flow of viscoelastic fluids in
  rectangular micro-channels.
\newblock {\em Appl. Math. Comp.}, 211(2):502--509, 2009.

\end{thebibliography}

\end{document}